\newcommand{\showcomments}{yes}
 \renewcommand{\showcomments}{no}
\newsavebox{\commentbox}
\newenvironment{com}%
{\ifthenelse{\equal{\showcomments}{yes}}%
{\footnotemark
        \begin{lrbox}{\commentbox}
        \begin{minipage}[t]{1.25in}\raggedright\sffamily\tiny
        \footnotemark[\arabic{footnote}]}
{\begin{lrbox}{\commentbox}}}%
{\ifthenelse{\equal{\showcomments}{yes}}%
{\end{minipage}\end{lrbox}\marginpar{\usebox{\commentbox}}}
{\end{lrbox}}}
\numberwithin{equation}{section}
\theoremstyle{plain}
\newtheorem{theorem}{Theorem}
\numberwithin{theorem}{section}
\newtheorem{corollary}[theorem]{Corollary}
\newtheorem{lemma}[theorem]{Lemma}
\newtheorem{proposition}[theorem]{Proposition}
\newtheorem{claim}[theorem]{Claim}
\newtheorem*{namedtheorem}{\theoremname}
\newcommand{\theoremname}{testing}
\newenvironment{named}[1]{\renewcommand{\theoremname}{#1}\begin{namedtheorem}}{\end{namedtheorem}}
\theoremstyle{definition}
\newtheorem{remark}[theorem]{Remark}
\newtheorem{example}[theorem]{Example}
\newtheorem{construction}[theorem]{Construction}
\newtheorem{prob}[theorem]{Problem}
\newcommand{\refthm}[1]{Theorem~\ref{Thm:#1}}
\newcommand{\reflem}[1]{Lemma~\ref{Lem:#1}}
\newcommand{\refprop}[1]{Proposition~\ref{Prop:#1}}
\newcommand{\refcor}[1]{Corollary~\ref{Cor:#1}}
\newcommand{\refrem}[1]{Remark~\ref{Rem:#1}}
\newcommand{\refex}[1]{Example~\ref{Ex:#1}}
\newcommand{\refclaim}[1]{Claim~\ref{Claim:#1}}
\newcommand{\refeqn}[1]{\eqref{Eqn:#1}}
\newcommand{\refitm}[1]{\eqref{Itm:#1}}
\newcommand{\refsec}[1]{Section~\ref{Sec:#1}}
\DeclareMathOperator{\stab}{Stab}
\newcommand{\dist}{\textup{\textsf{d}}}
\newcommand{\field}[1]{\mathbb{#1}}
\newcommand{\integers}{\ensuremath{\field{Z}}}
\newcommand{\naturals}{\ensuremath{\field{N}}}
\newcommand{\reals}{\ensuremath{\field{R}}}
\newcommand{\complexes}{\ensuremath{\field{C}}}
\newcommand{\hyperbolic}{\ensuremath{\field{H}}}
\newcommand{\RR}{\reals}
\newcommand{\ZZ}{\integers}
\newcommand{\HH}{\hyperbolic}
\newcommand{\CC}{\complexes}
\newcommand{\boundary}   {{\ensuremath \partial}}
\newcommand{\bdy}{\boundary}
\newcommand{\nclose}[1]{\ensuremath{\langle\!\langle#1\rangle\!\rangle}}
\newcommand{\calX}{\mathcal X}
\newcommand{\calY}{\mathcal Y}
\newcommand{\calC}{\mathcal C}
\DeclareMathOperator{\girth}{\text{girth}}
\begin{document}

\title{Growth of quasiconvex subgroups}

\author{Fran\c{c}ois Dahmani}
\address{Universit\'e  Grenoble Alpes \\
Institut Fourier (UMR 5582) \\
100 rue des maths,   CS 40700. \\
F- 38 058 Grenoble, cedex 9 \\
France}
\email{francois.dahmani@ujf-grenoble.fr}
\author{David Futer}
\address{Dept. of Mathematics \\ Temple University \\ Philadelphia, PA 19122 \\ USA}
\email{dfuter@temple.edu}
\author{Daniel T. Wise}
           \address{Dept. of Math. \& Stats.\\
                    McGill University \\
                    Montreal, QC, Canada H3A 0B9 }
           \email{wise@math.mcgill.ca}
\date{\today}
\thanks{Dahmani was supported by the IUF.
  Futer was supported by NSF grant DMS--1408682 and the Elinor Lunder 
  Membership at the Institute for Advanced Study.
Wise was supported by NSERC}

\maketitle

\begin{abstract}
We prove that non-elementary hyperbolic groups grow exponentially more quickly than their
infinite index quasiconvex subgroups. The proof uses the classical tools of automatic structures and Perron--Frobenius theory.

We also extend the main result to relatively hyperbolic groups and cubulated groups. These extensions use the notion of growth tightness and the work of Dahmani, Guirardel, and Osin on rotating families.
\end{abstract}

\section{Introduction}

Consider a group $G$ acting by isometries on a graph $\Upsilon$, 
properly and cocompactly. 
One important special case is $\Upsilon(G,S)$, namely the Cayley graph of $G$ with respect to some finite generating set $S$, but $\Upsilon$ can also be more general. Fix a basepoint $b \in \Upsilon$ and a subset $H \subset G$.
The \emph{growth function} of $H$ is the function 
\begin{equation}\label{Eqn:fH}
f_{H,\Upsilon}(n) = \# \left\{ h\in H: \dist_\Upsilon(b, \, h(b))\leq n \right\} .
\end{equation}

Since $G$ is a quotient of a free group, and the action on $\Upsilon$ is proper, the growth function $f_{G,\Upsilon}$ is no larger than exponential. For any $H \subset G$, define the \emph{growth rate of $H$} to be
\begin{equation}\label{Eqn:LambdaDef}
\lambda_H = \lambda_H(\Upsilon) = \limsup_{n \to \infty} \sqrt[n]{ f_{H,\Upsilon}(n) }  .
\end{equation}
 We emphasize that the limit depends a great deal on $\Upsilon$. However, the triangle inequality in $\Upsilon$ implies that $\lambda_H$ is independent of $b$.

In this paper, we will be concerned with groups acting on hyperbolic metric spaces in the sense of Gromov, including groups that are themselves hyperbolic. We refer the reader to  \cite{Gromov87} and \cite{ABC91} for standard definitions regarding hyperbolic metric spaces and the groups that act on them.

Cannon showed that when $G$ is hyperbolic, the sequence $\sqrt[n]{ f_{G,\Upsilon}(n) }$  converges to $\lambda_G$ \cite{cannon:cocompact-hyp-groups, cannon:trieste-survey}; see \refcor{CannonGrowth} below.
 Coornaert showed the stronger result that $f_{G,\Upsilon}(n)$ is bounded above and below by constants times $(\lambda_G)^n$ \cite[Theorem 7.12]{Coornaert93}.
%
In this paper, we prove that there is a definite separation between the growth rate of $G$ and of any infinite index quasiconvex subgroup of $G$.

\begin{theorem}\label{Thm:GrowthGeneral}
Let $G$ be a non-elementary hyperbolic group acting properly and cocompactly on a graph $\Upsilon$.
Let $H$ be a quasiconvex subgroup of infinite index. Then
\[
\lambda_H(\Upsilon) < \lambda_G (\Upsilon).
\]
\end{theorem}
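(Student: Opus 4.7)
The plan is to build a finite state automaton for $G$ whose transition matrix has Perron eigenvalue $\lambda_G$, to realize $H$ as a sub-automaton with Perron eigenvalue $\lambda_H$, and then to promote the obvious inequality to a strict one via Perron--Frobenius monotonicity.

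First I would invoke Cannon's theorem that a hyperbolic group has finitely many cone types, which yields a finite state automaton $\mathcal{A}_G$ whose accepted language is a set of geodesic normal forms, one per element of $G$. When $\Upsilon$ is not the Cayley graph one needs a mild generalization, but the action is proper and cocompact, so the same finiteness holds with states indexed by isomorphism types of cones in $\Upsilon$ based at the endpoints of directed paths from $b$. Let $M$ be the non-negative integer transition matrix of $\mathcal{A}_G$; a standard count shows $f_{G,\Upsilon}(n)$ grows like the sum of entries of $M^n$, and hence $\lambda_G = \rho(M)$ is the spectral radius of $M$. Combined with Coornaert's two-sided bound $f_{G,\Upsilon}(n) \asymp (\lambda_G)^n$, this will pin down which strongly connected component of $M$ is ``dominant'', i.e.\ achieves the spectral radius.

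Second I would cut out a sub-automaton $\mathcal{A}_H$ of $\mathcal{A}_G$ whose accepted paths biject with a set of geodesic representatives for the elements of $H$. Quasiconvexity enters here: there is a constant $K$ such that geodesics in $\Upsilon$ between points of $Hb$ stay in the $K$-neighbourhood of $Hb$. Refining the cone-type states by a bounded amount of local combinatorial data about proximity to $Hb$ produces only finitely many new states, and the resulting sub-matrix $M_H$ of (a refinement of) $M$ has spectral radius $\lambda_H$. So one obtains $M_H \leq M$ entry-wise, hence $\lambda_H \leq \lambda_G$ automatically.

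Third, and this is the heart of the argument, I would upgrade this to $\lambda_H < \lambda_G$ using Perron--Frobenius strict monotonicity: if $N$ is an irreducible non-negative matrix with spectral radius $\rho$ and $N'$ is obtained by deleting a row and the corresponding column, then $\rho(N') < \rho$. Concretely, I need to produce a state in the dominant irreducible component of $\mathcal{A}_G$ which is not a state of $\mathcal{A}_H$. This is where infinite index and non-elementarity enter: since $H$ is an infinite-index quasiconvex subgroup of a non-elementary hyperbolic group, its limit set $\Lambda H$ is a proper closed subset of $\partial G$, so the cone types that appear along geodesic rays whose endpoints lie in $\partial G \setminus \Lambda H$ cannot all be states of $\mathcal{A}_H$; picking such a ray that lies in the dominant component yields the required missing state.

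The step I expect to be the main obstacle is the last one, because $M$ in general has several strongly connected components, possibly with different spectral radii, and one must be careful to (i) identify the dominant components and (ii) certify that $\mathcal{A}_H$ genuinely misses at least one of their states rather than only missing transient parts of $\mathcal{A}_G$. Executing (i) cleanly will likely require a preliminary normalization of the automaton (passing to a power, or to a recurrent subautomaton picked out by Coornaert's estimate), and executing (ii) will require turning the topological statement ``$\Lambda H \subsetneq \partial G$'' into a combinatorial statement about infinitely many cone types occurring outside any bounded neighbourhood of $Hb$.
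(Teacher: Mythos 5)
Your first two steps match the paper's framework: automatic structures, identification of $\lambda_H$ and $\lambda_G$ with Perron--Frobenius eigenvalues of pruned automata, and the entrywise comparison giving $\lambda_H \le \lambda_G$. (The generalization from Cayley graphs to arbitrary proper cocompact actions is handled in the paper by embedding $G$ into $G * F_r$ and blowing up $\Upsilon$, rather than by re-proving cone-type finiteness, but that is packaging.) The genuine gap is at exactly the step you flag as the main obstacle, and your proposed resolution does not close it. First, to apply strict Perron--Frobenius monotonicity you need, for \emph{every} maximal strongly connected component of the $G$--automaton, that the $H$--subautomaton meets it in a proper subgraph; exhibiting one missing state in one dominant component is not enough, since $\rho(M_H)$ is the maximum over the irreducible blocks of $M_H$, and one of those blocks could coincide with a different maximal block of $M$. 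Second, and more seriously, the passage from $\Lambda H \subsetneq \partial G$ to a missing state or transition fails as stated: a state is a cone type, and the same cone type is reached by many different words. A geodesic ray converging to a point of $\partial G \setminus \Lambda H$ eventually has no prefix extendable to a word of $L_H$ (by quasiconvexity it leaves every neighborhood of $Hb$), but every state and every transition that ray traverses may still be traversed by \emph{other} words that do lie in $L_H$ --- so nothing is actually deleted from $M_H$. Converting ``geodesics into $H$ avoid a certain word'' into a spectral gap in the recurrent part of the automaton is precisely the content of Arzhantseva--Lysenok's theorem (Theorem 7.1 of the paper), whose proof is the hard part of that route and is absent from your sketch.

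The paper sidesteps this by arguing in the opposite direction: rather than deleting states from an automaton for $G$, it \emph{enlarges} $\Gamma_H$. Ping-pong on $\partial G$ produces $g$ with $\langle H, g^m\rangle \cong H *_F Z$ for a finite group $F$; one then attaches to each accept state of $\Gamma_H$ an arc back to the start state reading $g^m$ (weighted by $1/|F|$ to correct for the amalgamation), obtaining a \emph{strongly connected} automaton $\Gamma_M \supsetneq \Gamma_H$ whose accepted language still counts into balls of $\Upsilon$ with uniformly bounded multiplicity. Strict monotonicity now applies cleanly to the irreducible $\Gamma_M$, giving $\lambda_H = \rho_H < \rho_M \le \lambda_G$. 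To keep your top-down approach you would need either to import Theorem 7.1 or to supply an argument of comparable depth showing $M_H$ is proper inside each maximal block.
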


Recall that a group is \emph{elementary} if it contains a cyclic subgroup of finite index. The growth function of an elementary group $G$ is at most linear, hence $\lambda_G = 1$ for such a group. Since $\lambda_H \geq 1$ for any $H$, the ``non-elementary'' hypothesis is necessary.
The hypothesis that $H$ is quasiconvex is also necessary, as demonstrated in Examples~\ref{Ex:Fiber} and~\ref{Ex:Rips}.

\refthm{GrowthGeneral} has extensions to relatively hyperbolic groups and cubulated groups, as follows. We refer the reader to \refsec{BeyondHyperbolic} for the relevant definitions.

\begin{theorem}\label{Thm:RelHyperbolic}
Let $(G,\mathcal{P})$ be a non-elementary relatively hyperbolic group, and $H$ a relatively
quasiconvex subgroup of $(G,\mathcal{P})$ of infinite index in $G$. Suppose that $G$ acts properly and cocompactly on a graph $\Upsilon$.
Then
\[
\lambda_H(\Upsilon) < \lambda_G (\Upsilon).
\]
\end{theorem}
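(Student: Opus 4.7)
The plan is to reduce the relatively hyperbolic setting to the word-hyperbolic one by passing to a carefully chosen Dehn filling quotient. Specifically, I would construct a surjection $\eta \colon G \twoheadrightarrow \bar G = G/N$ such that (a) $N \neq \{1\}$, (b) $\eta|_H$ is injective, and (c) $\bar G$ is a non-elementary word-hyperbolic group. The quotient map then descends to a proper cocompact action of $\bar G$ on the quotient graph $\bar \Upsilon := \Upsilon/N$, with the distance comparison $d_{\bar \Upsilon}(\bar u, \bar v) \leq d_\Upsilon(u, v)$ for any lifts. Note that this strategy sidesteps proving that $\bar H$ has infinite index in $\bar G$ (which would be awkward to arrange directly); the strict drop is instead supplied by growth tightness.

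To build $\eta$, I would invoke the Dahmani--Guirardel--Osin rotating families machinery applied to the peripheral structure $\mathcal{P}$. For each $P_i \in \mathcal{P}$, select a deep enough non-trivial normal subgroup $R_i \triangleleft P_i$ (for instance, a high power of an infinite-order element, chosen so that $P_i / R_i$ is hyperbolic) whose conjugates together form a rotating family with rotation radius exceeding the relative quasiconvexity constant of $H$. The Osin--DGO theorems then ensure that $N = \langle\langle \bigcup_i R_i \rangle\rangle$ is non-trivial, that $\bar G$ is word hyperbolic, that $N \cap H = \{1\}$ so that $H$ embeds in $\bar G$, and that $\bar G$ remains non-elementary. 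Now I apply growth tightness for the quotient $\eta$: since $\bar G$ is a proper, non-elementary hyperbolic quotient of $G$ by a non-trivial normal subgroup and the $G$-action on $\Upsilon$ has the requisite contracting geometry, one obtains the strict inequality $\lambda_{\bar G}(\bar \Upsilon) < \lambda_G(\Upsilon)$. This is a standard growth tightness statement, provable by Arzhantseva--Lysenok-style small-cancellation arguments in $\bar G$ or via the Perron--Frobenius framework already developed for \refthm{GrowthGeneral}.

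The conclusion now follows from a short chain of inequalities. Injectivity of $\eta|_H$ together with $d_{\bar \Upsilon} \leq d_\Upsilon$ yields $f_{H, \Upsilon}(n) \leq f_{\bar H, \bar \Upsilon}(n)$ and hence $\lambda_H(\Upsilon) \leq \lambda_{\bar H}(\bar \Upsilon)$; the trivial containment $\bar H \leq \bar G$ gives $\lambda_{\bar H}(\bar \Upsilon) \leq \lambda_{\bar G}(\bar \Upsilon)$; and growth tightness supplies the final strict $\lambda_{\bar G}(\bar \Upsilon) < \lambda_G(\Upsilon)$, so that
\[
\lambda_H(\Upsilon) \;\leq\; \lambda_{\bar H}(\bar \Upsilon) \;\leq\; \lambda_{\bar G}(\bar \Upsilon) \;<\; \lambda_G(\Upsilon).
\]
The main technical obstacle is the construction step: arranging a single filling $\eta$ that is simultaneously non-trivial, $H$-preserving, and lands in a non-elementary word-hyperbolic target requires the full DGO apparatus, because the peripheral subgroups in $\mathcal{P}$ may be very general and each quotient $P_i/R_i$ must be forced to be hyperbolic (typically finite or virtually cyclic) while still leaving $H$ and the non-peripheral hyperbolic geometry of $G$ undisturbed. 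The rotating-family formalism is precisely what is needed to juggle these constraints; once that is in place, growth tightness converts the mere non-triviality of $N$ directly into the desired quantitative separation.
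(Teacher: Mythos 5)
Your overall skeleton --- produce an infinite normal subgroup $N \triangleleft G$ with $N \cap H = \{1\}$, pass to $\bar G = G/N$ acting on $\bar\Upsilon = N\backslash\Upsilon$, and let growth tightness supply the strict inequality --- is exactly the paper's strategy (\refprop{SmallGrowthTight}), and your closing chain of inequalities is correct as written. The gap is in how you construct $N$. You propose to Dehn fill the peripheral subgroups, but a relatively quasiconvex subgroup $H$ may have infinite parabolic pieces; in the extreme case $H$ is itself a peripheral subgroup $P_1$ of infinite index, which is relatively quasiconvex and satisfies every hypothesis of the theorem. Then \emph{every} nontrivial $R_1 \triangleleft P_1$ meets $H$, so $N \cap H \neq \{1\}$ and $\eta|_H$ is not injective; the Osin/DGO filling theorems only yield $H$-injectivity when the kernels $R_i$ can be chosen to avoid the parabolic parts of $H$, which is impossible here. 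A second problem is that the theorem places no restriction on the $P_i$: a general peripheral subgroup need not admit any deep normal subgroup $R_i$ (avoiding the finite exceptional set of the filling theorem) with $P_i/R_i$ hyperbolic, so the filling you describe may simply not exist. Note also that word-hyperbolicity of $\bar G$ is never used in your final chain of inequalities, so engineering it buys you nothing.

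The paper circumvents both problems by building $N$ from a single loxodromic element rather than from the peripherals. Starting from the coned-off Cayley graph $X_0$, it further cones off the cosets of $H$ and of its finitely many multiple intersections (using the finite relative height theorem of Hruska--Wise and the graded relative hyperbolicity of Dahmani--Mj) to obtain a hyperbolic space $X_h$ on which $G$ acts acylindrically and $H$ acts elliptically (\refprop{SmallConstruction}). For $\gamma$ loxodromic on $X_h$, DGO provide $m$ such that every nontrivial element of $N = \langle\langle \gamma^m \rangle\rangle$ is loxodromic on $X_h$; since $H$ is elliptic there, $N \cap H = \{1\}$ comes for free, with no hypotheses on the peripherals and no need for $\bar G$ to be hyperbolic. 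If you want to retain a filling-flavoured argument, you would at minimum have to cone off $H$ itself (not just the peripheral structure $\mathcal{P}$) before extracting the rotating family --- which is essentially what the paper does.
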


\begin{theorem}\label{Thm:Cubulated}
Let $G$ be a non-elementary group, acting properly and cocompactly on a CAT(0) cube complex $\calX$.
 Suppose that $\calX$ does not decompose as a product. Then, for every subgroup $H \subset G$ stabilizing an essential hyperplane of $\calX$, we have
\[
\lambda_H(\calX) < \lambda_G (\calX).
\]
\end{theorem}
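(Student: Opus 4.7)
The plan is to mimic the strategy used to prove \refthm{RelHyperbolic}: produce a non-trivial normal quotient $\bar G = G/N$ of $G$ into which $H$ embeds, and use growth tightness to show that the growth rate in $\bar G$ drops strictly below that of $G$. Combined with the injection $H \hookrightarrow \bar G$, which ensures $\lambda_H(\calX) \le \lambda_{\bar G}(\calX)$, this will yield
\[
\lambda_H(\calX) \le \lambda_{\bar G}(\calX) < \lambda_G(\calX).
\]
The normal subgroup $N$ is to be produced using the rotating families technology of Dahmani--Guirardel--Osin, and the strict drop in growth is to come from the growth tightness of non-elementary actions on CAT(0) cube complexes admitting a rank-one isometry.

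The first step is to exhibit the necessary contracting element. Because $\calX$ is non-product and $G$ acts properly and cocompactly, a theorem of Caprace--Sageev supplies an element $g \in G$ acting as a rank-one isometry on $\calX$, with a strongly contracting axis $A_g$. Such a $g$ behaves as a WPD element on an associated hyperbolic model of $\calX$ (for instance the contact graph of Hagen), placing the pair $(G, \langle g \rangle)$ within the hypotheses of the rotating families theorem. The hyperplane stabilizer $H$ has infinite index in $G$, since the $G$-orbit of the stabilized hyperplane $\hat W$ is infinite under the non-product and non-elementarity assumptions. After replacing $g$ by a suitable conjugate, I may arrange that $g \notin H$, $\langle g \rangle \cap H = \{1\}$, and $A_g$ lies in the complement of a large neighborhood of $\hat W$.

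For $k$ sufficiently large, the rotating families theorem then guarantees that $N_k := \langle\langle g^k \rangle\rangle$ is a free subgroup of $G$ with $N_k \cap H = \{1\}$, so $H$ injects into $\bar G := G/N_k$, while $\bar G$ remains non-elementary. Invoking growth tightness for proper actions with a contracting element, in the spirit of Arzhantseva--Cashen--Tao, gives $\lambda_{\bar G}(\calX) < \lambda_G(\calX)$, which completes the argument. The main obstacle is precisely this last step: one must verify that the available growth tightness result applies to the intrinsic orbit-counting functions $f_{H,\calX}$ and $f_{G,\calX}$ from \refeqn{fH} (measured in the cubical metric, not in a word metric on $G$), and that the rotating family can be arranged so that $N_k \cap H = \{1\}$. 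The latter reduces to a small-cancellation type estimate controlling how translates of $A_g$ interact with $\hat W$, which exploits the contraction property of the axis in an essential way.
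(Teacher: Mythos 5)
Your overall architecture matches the paper's: produce a rank-one element via Caprace--Sageev, kill a high power of it via the Dahmani--Guirardel--Osin machinery to get a normal subgroup $N$ meeting $H$ trivially, and then apply growth tightness of the $G$--action on $\calX$ (Arzhantseva--Cashen--Tao) to conclude $\lambda_H \le \lambda_{\overline{H}} \le \lambda_{\overline{G}} < \lambda_G$. This is exactly the content of \refprop{SmallGrowthTight}, and your worry about the growth tightness applying to the cubical metric rather than a word metric is resolved by citing \cite[Theorem 9.2]{arzhantseva-cashen-tao} for the action on $\calX^{(1)}$ itself.

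However, the step you yourself flag as ``the main obstacle'' --- verifying $N_k \cap H = \{1\}$ --- is a genuine gap, and the route you sketch for it (conjugating $g$ so that its axis $A_g$ avoids a large neighborhood of the hyperplane $\hat W$, then running a small-cancellation estimate on how translates of $A_g$ interact with $\hat W$) is both unexecuted and harder than necessary. The difficulty is that $N_k = \nclose{g^k}$ contains arbitrary products of conjugates of $g^{\pm k}$, so controlling the interaction of a single axis with a single hyperplane does not by itself exclude nontrivial elements of $N_k$ from $H$. The paper's resolution is to change the space on which loxodromicity and ellipticity are measured: one applies \cite[Theorem 8.7]{DGO} to the WPD action of $G$ on the \emph{contact graph} $\calC\calX$ (WPD by Behrstock--Hagen--Sisto, with the Caprace--Sageev rank-one element loxodromic there), obtaining $N$ all of whose nontrivial elements are loxodromic \emph{on $\calC\calX$}. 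Since $H$ stabilizes a hyperplane, it fixes the corresponding vertex of $\calC\calX$ and hence acts elliptically there; an elliptic subgroup contains no loxodromic element, so $N \cap H = \{1\}$ with no further estimate. You mention the contact graph in passing as a possible hyperbolic model, but you never use the one fact that makes the argument close: hyperplane stabilizers are vertex stabilizers of $\calC\calX$. Without that observation (or a completed small-cancellation argument in $\calX$ itself), the proof is incomplete.
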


Given these results, one may ask whether there exists a uniform upper bound $\alpha < \lambda_G$
such that $\lambda_H \leq \alpha$ for each infinite index quasiconvex subgroup $H \subset G$. In \refthm{NonUniform}, we construct an example (with $G$ a free group) showing that no such uniform bound can exist.

\subsection{Classical tools}
Cannon's study of the growth rate $\lambda_G$ came a consequence of his construction of automatic structures for hyperbolic groups.
The corresponding counts of rooted paths in a directed graph are closely tied to the theory of Perron--Frobenius eigenvalues of non-negative matrices. In keeping with this classical perspective, our proof  of \refthm{GrowthGeneral} primarily relies on automatic structures and Perron--Frobenius theory. We review the relevant material in
Sections~\ref{Sec:Automatic} and \ref{Sec:Perron}.

A crucial consequence of the classical theories, encapsulated in \refthm{PathCount}, is that
the growth rate $\lambda_G$ can be expressed as
the Perron--Frobenius eigenvalue of a transition matrix. Although this statement is surely known to experts, we were unable to find a sufficiently general statement in the literature. Thus we wrote down a proof in \refsec{GrowthRate}. We hope that a written account of \refthm{PathCount} will be useful to other researchers.

In the special case where $G$ is torsion-free and $\Upsilon$ is its Cayley graph with respect to some generating set, \refthm{GrowthGeneral} turns out to be a fairly quick consequence of  \refthm{PathCount} and the construction of a free product $H * \ZZ$ inside $G$.
To aid the reader, we treat this special case of \refthm{GrowthGeneral} in \refsec{TorsionFree}. See \refthm{GrowthTorsionFree}.

Proving the general case of \refthm{GrowthGeneral} requires certain generalizations of classical results about automatic structures (\refthm{Automatic}), growth rates (\refthm{PathCount}), and free products  (\refthm{FreeProduct}). In  \refsec{GeneralHyp}, we bootstrap from classical results to derive these general statements. Then we complete the proof of \refthm{GrowthGeneral}, following the same outline as \refthm{GrowthTorsionFree} while carrying some extra structure.

\subsection{Modern extensions}
All of the results and tools used in the proof of \refthm{GrowthGeneral} were known by 1990. By contrast, the proofs of Theorems~\ref{Thm:RelHyperbolic} and \ref{Thm:Cubulated} use a number of tools from modern geometric group theory. 

The first such tool is the notion of growth tightness, introduced by Grigorchuk and de la Harpe  \cite{grigorchuk-delaharpe}, which roughly states that a group $G$ grows faster than its quotients. See \refeqn{GrowthTight} for a precise definition. Arzhantseva and Lysenok  \cite{arzhantseva-lysenok} showed that hyperbolic groups are growth tight, by first proving a related result about regular languages for such a group (see \refthm{StrongContainmentGrowth} below). As we describe in \refsec{Alternate}, this point of view gives an alternate approach to \refthm{GrowthGeneral}. 

In \refsec{Alternate}, we also explain that  \refthm{GrowthGeneral} follows from a very recent theorem of Matsuzaki, Yabuki, and Jaerisch  on Patterson--Sullivan measures \cite{MYJ}.
 Their result is stated in very different language, and we explain the translation in \refsec{Measure}.

The second key tool for our approach is the idea of weakly properly discontinuous (WPD) elements acting on hyperbolic metric spaces, introduced by Bestvina and Fujiwara \cite{Bestvina-Fujiwara:bounded-cohomology}. Given such a $G$--action on $X$, and a loxodromic element $g \in G$, Dahmani, Guirardel, and Osin showed that there is some power $g^n$ whose entire normal closure $\nclose{g^n}$ acts loxodromically \cite{DGO}. As a consequence, any subgroup $H \subset G$ that acts elliptically on $X$ will survive in the quotient $G/ \nclose{g^n}$. When the $G$--action on a graph $\Upsilon$ is growth tight, this implies that $\lambda_H(\Upsilon) < \lambda_G(\Upsilon)$; we record this fact in \refprop{SmallGrowthTight}. Although the proof is very short, the result is quite general, and may be useful elsewhere.

Both Theorems~\ref{Thm:RelHyperbolic} and \ref{Thm:Cubulated} are proved in \refsec{BeyondHyperbolic}, by relying on \refprop{SmallGrowthTight}. For both cubulated groups and relatively hyperbolic groups, growth tightness is known by results of Arzhantseva, Cashen, and Tao \cite{arzhantseva-cashen-tao} and Yang \cite{Yang:GrowthTightness}. Thus the challenge is to find an appropriate action on a hyperbolic space $X$.
For relatively hyperbolic groups, we use the work of Hruska and Wise \cite{HruskaRelQC} and Dahmani and Mj \cite{Dahmani-Mj} to repeatedly cone off the Cayley graph of $G$ until we obtain an appropriate space $X$. For cubulated groups, the hyperbolic space $X$ is the contact graph $\calC \calX$ of a CAT(0) cube complex $\calX$, which has the right properties by a theorem of Behrstock, Hagen, and Sisto \cite{BHS:curve-complex-cubical}. Thus, in both cases, we conclude that $G$ grows faster than its subgroups.

In \refsec{Examples}, we prove \refthm{NonUniform}, which shows that quasiconvex subgroups of a free group $G$ can have growth rates approaching that of $G$ itself. We also present some examples and problems that are motivated by this work.

\subsection*{Acknowledgments} We thank Danny Calegari and Moon Duchin for helpful conversations.
We are also grateful to Goulnara Arzhantseva,  Chris Cashen, Ilya Gekhtman, Rita Gitik, Mark Hagen, Eduardo Mart\'{i}nez-Pedroza, Thomas Ng, and Sam Taylor for their helpful comments and corrections on an earlier draft of this paper.

\section{Automatic Group Background}\label{Sec:Automatic}

This section recalls some standard facts about automata and regular languages. We refer to \cite{Epstein92} for background and context. See also \cite[Section 3]{calegari:ergodic-groups} for a rapid survey.

Let $S$ be a finite set of letters, called an \emph{alphabet}. A \emph{word} in $S$ is a finite sequence of letters of $S$.
A \emph{language} $L$ over $S$ is a subset of the set of all words.
A \emph{finite state automaton} over $S$ is a finite directed graph $\Gamma$ whose edges are labeled by elements of $S$,
with one of its vertices declared to be a \emph{start state}, and some of its vertices declared to be \emph{accept states}.
Every automaton in this paper will be assumed to be \emph{deterministic}, meaning that no vertex has two outgoing edges with the same label.

 A directed path $e_1e_2\cdots e_r$ in $\Gamma$ \emph{reads} a word consisting of the sequence
$s_1s_2\cdots s_r$ of letters labeling its edges.
A word is \emph{accepted} by $\Gamma$ if it is read by a directed path from a start state to an accept state. A language is \emph{regular} if it is the full set of words accepted by some finite state automaton.

A vertex $v$ of an automaton $\Gamma$ is \emph{redundant} if it is not traversed by any path
from the start state to an accept state. Note that a redundant vertex can be removed without any effect on the language accepted by $\Gamma$. The automaton $\Gamma$ is \emph{pruned} if it contains no redundant vertices.

%

Let $G$ be a group with a finite, symmetric generating set $S$.
 An \emph{automatic structure} for $(G,S)$ is a regular language $L$ over $S$, along with positive constants $\kappa, \epsilon, \chi$, such that
\begin{enumerate}
\item Each word of $L$ is a $(\kappa,\epsilon)$-quasigeodesic in the Cayley graph $\Upsilon(G,S)$, starting at the identity.
\item  The map $L\rightarrow G$ is surjective.
\item If $w_1,w_2\in L$ and $s\in S$ satisfy $w_1s=_G w_2$,
then $w_1,w_2$ $\chi$-fellow travel in $\Upsilon(G,S)$.
\end{enumerate}
See \cite[Definition 11.26]{cannon:trieste-survey}. We refer to \cite{Epstein92} and \cite{gersten-short} for further details and other equivalent formulations.
We will not make use of the fellow traveling condition $(3)$.

Any total ordering of the alphabet $S$ induces a \emph{lexicographic ordering} on a language $L$ over $S$. Thus, although geodesics in a Cayley graph $\Upsilon(G,S)$ can be non-unique, for any $g \in G$ one may pick out the geodesic from $1$ to $g$ that is lexicographically first. The language of lexicographically-first geodesics is called a \emph{short-lex geodesic language} for $G$.

We may now state Cannon's celebrated result \cite[Theorem 11.27]{cannon:trieste-survey}. Compare \cite[Theorem 3.2.2]{calegari:ergodic-groups}.

\begin{theorem}[Hyperbolic Automatic]\label{Thm:Automatic}
Let $G$ be a hyperbolic group, with a  finite symmetric generating set $S$.
Choose an ordering on $S$, and let $L$ be the resulting short-lex geodesic language. Then $L$ is regular,
 maps bijectively to $G$, and endows $G$ with an automatic structure.
\end{theorem}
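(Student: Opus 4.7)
The plan is to construct an explicit finite state automaton that recognizes $L$, using Cannon's notion of cone types, and then verify the three axioms of an automatic structure. For $g \in G$, define the \emph{cone type} of $g$ to be
\[
C(g) = \{ w \in S^* : w \text{ is a geodesic word and } |gw| = |g| + |w| \}.
\]
The central hyperbolic-geometric fact to establish is that $C(g)$ depends only on the \emph{local picture} of $g$, namely the isomorphism type of the pointed, labeled ball $B(g, 2\delta+1) \subset \Upsilon(G,S)$ (equivalently, the set of elements $h$ with $|h| \leq 2\delta + 1$ such that $|gh| \leq |g|$). The proof is the standard slim-triangles argument: if $s_1 \cdots s_r \in C(g)$ but $s_1 \cdots s_r \notin C(g')$ for some $g'$ with the same local picture, then one uses $\delta$-thinness of the triangle with vertices $1$, $g'$, $g' s_1 \cdots s_r$ to find a back-tracking shortcut that must already be detectable inside $B(g',2\delta+1)$, contradicting that the local pictures agree. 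Since there are only finitely many isomorphism types of such labeled balls, there are only finitely many cone types.

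Given finiteness, define the \emph{geodesic automaton} $\Gamma_{\mathrm{geo}}$ to have one state for each cone type (plus a dead state), start state $C(1)$, all non-dead states accepting, and, for a state $C(g)$ and a letter $s \in S$, a transition to $C(gs)$ when $s \in C(g)$ and to the dead state otherwise. By construction, a word $w = s_1 \cdots s_n$ is accepted iff each prefix extends a geodesic by one, iff $w$ itself is a geodesic. Thus the set of all geodesic words is regular.

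To refine to the short-lex language, enlarge the state space by intersecting with a second automaton that rejects a word $s_1 \cdots s_n$ as soon as some letter $s_i$ can be replaced by a smaller $s' < s_i$ with $s_1 \cdots s_{i-1} s'$ extendable to another geodesic to the same group element. The key point is that this ``there exists a lex-earlier competing geodesic'' condition is again determined by the cone type of $s_1 \cdots s_{i-1}$ together with the current letter, because any competing geodesic must fellow travel with $s_1 \cdots s_n$ at bounded distance by slim triangles, hence the competition is detectable inside a bounded ball. Intersecting these two regular languages gives a finite state automaton recognizing exactly $L$, so $L$ is regular. Surjectivity and bijectivity of the evaluation map $L \to G$ are immediate from the definition: every $g$ has at least one geodesic representative, pick the lex-smallest one; conversely distinct words in $L$ cannot represent the same element, or one would be lex-smaller than the other.

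Finally, we verify the automatic structure axioms. Axiom~(1) is trivial because words of $L$ are geodesics, hence $(1,0)$-quasigeodesics starting at $1$. Axiom~(2) is surjectivity, verified above. Axiom~(3), the fellow traveler property, follows from the standard argument that two geodesics $w_1, w_2$ in a $\delta$-hyperbolic space whose endpoints differ by a single generator stay within $\chi$ of each other for $\chi = \chi(\delta,|S|)$; this uses only thin triangles and the fact that both are geodesics, and does not require the short-lex condition. The main obstacle in the whole argument is the finiteness of cone types, which is where the hyperbolicity of $G$ enters in an essential way; once that is in hand, the automaton construction and the verification of the axioms are largely formal.
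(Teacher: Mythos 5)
The paper offers no proof of this statement: it is quoted directly from Cannon's Trieste survey (Theorem 11.27 there; see also Calegari, Theorem 3.2.2), so there is no internal argument to compare against. Your route --- finiteness of cone types, the geodesic automaton whose states are cone types, then refinement to the short-lex sublanguage --- is exactly the classical proof of the cited sources. The cone-type finiteness argument and the verification of the fellow-traveler axiom are fine. (One small slip: in a Cayley graph every pointed labeled ball $B(g,r)$ is isomorphic to $B(1,r)$, so the invariant determining the cone type is not the isomorphism type of the ball but the function $h\mapsto |gh|-|g|$ restricted to it --- which is what your parenthetical actually says.)

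There is, however, a gap in the short-lex refinement. You claim the second automaton can "reject a word as soon as some letter $s_i$ can be replaced by a smaller $s'$" with the competitor extendable to a geodesic to the same element, and that this condition "is determined by the cone type of $s_1\cdots s_{i-1}$ together with the current letter." Neither is right: whether the competitor prefix $s_1\cdots s_{i-1}s'$ actually continues to a geodesic ending at $\overline{w}$ depends on the as-yet-unread suffix $s_{i+1}\cdots s_n$, so no automaton reading left to right can decide it at time $i$; and two prefixes with the same cone type can carry different sets of surviving competitors. The standard fix is a genuinely additional piece of machinery: enlarge the state to record, besides the cone type of the prefix $p$, the set of word differences $\overline{p}^{-1}\overline{q}$ (together with each competitor's cone type and its lexicographic relation to $p$) over all geodesic words $q$ with $|q|=|p|$ that are lex-smaller at the first disagreement. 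Synchronous fellow traveling of geodesics with common endpoints gives $d(\overline{q},\overline{p})\le 2\delta$, which is what keeps this set of word differences finite and hence the state space finite; one accepts at the end iff no surviving competitor has word difference equal to $1$. (Equivalently: show the padded convolution relation $\{(u,w): u,w \text{ geodesic},\ \overline{u}=\overline{w}\}$ is regular and invoke closure of regular languages under projection and complement.) With that step repaired, the rest of your argument --- bijectivity and the three axioms --- goes through as written.
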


	We will also need the following result of Gersten and Short about quasiconvex subgroups \cite[Theorem~2.2]{gersten-short}.

\begin{theorem}[Quasiconvex Automatic]\label{Thm:QuasiconvexRational}
Let $G$ be a hyperbolic group with an automatic structure $L_G$. Let $H$ be a quasiconvex subgroup. Then the sub-language  $L_H\subset L_G$ of words that map to $H$ is itself a regular language.
\end{theorem}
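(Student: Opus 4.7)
The plan is to build a finite state automaton that accepts exactly $L_H$, by means of a product construction combining the automaton for $L_G$ with information about right cosets of $H$.

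The first step is to control how words of $L_H$ travel relative to $H$. Since every word of $L_G$ is a $(\kappa, \epsilon)$--quasigeodesic in the Cayley graph $\Upsilon(G,S)$ starting at $1$, and $H$ is quasiconvex in this hyperbolic graph, the stability of quasigeodesics (the Morse lemma) yields a constant $R$ with the following property: for every $w \in L_G$ representing an element of $H$, every prefix of $w$ represents a group element lying within distance $R$ of $H$. In particular, letting $B = B_R(1)$ denote the (finite) ball of radius $R$ about $1$ in $\Upsilon$, every coset $Hg$ realized by a prefix of such a word has a representative in $B$, so only finitely many cosets arise in this way.

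With this in hand, let $\Gamma_G = (Q, S, \delta, q_0, F)$ be the deterministic automaton recognizing $L_G$, and let $C$ be the finite set of right cosets $Hg$ with $g \in B$. I would construct $\Gamma_H$ with state set $(Q \times C) \sqcup \{\ast\}$, start state $(q_0, H)$, accept states $\{(q, H) : q \in F\}$, and transitions defined as follows: an input letter $s \in S$ sends $(q, Hg)$ to $(\delta(q, s), Hgs)$ if $Hgs \in C$, and to the sink $\ast$ otherwise; the sink is absorbing. The second coordinate is well defined because $Hgs$ depends only on the coset $Hg$ and the letter $s$, not on the chosen representative.

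It remains to check that $\Gamma_H$ accepts precisely $L_H = L_G \cap H$. If $w \in L_H$, then the first coordinate follows the accepting $\Gamma_G$--run on $w$ and ends in $F$; by the previous paragraph each prefix coset stays in $C$, and the terminal coset is $H$ itself, so the run ends at an accept state. Conversely, any word accepted by $\Gamma_H$ lies in $L_G$ by the first coordinate, and represents an element of $H$ because the second coordinate ends at $H$. I expect the only substantive step to be the Morse-stability input producing the constant $R$ that makes $C$ finite; everything else is routine bookkeeping with finite automata.
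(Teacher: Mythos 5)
Your argument is correct and complete: the Morse stability of the $(\kappa,\epsilon)$--quasigeodesics in $L_G$ together with quasiconvexity of $H$ does give the finiteness of the relevant coset set $C$, and the product automaton then recognizes exactly $L_H$. The paper does not prove this statement itself but cites Gersten--Short, and your construction is essentially their standard argument, so there is nothing further to reconcile.
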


In \refsec{ConstructAction}, we will generalize these theorems to the context of group actions on graphs.

\section{Perron--Frobenius Background}\label{Sec:Perron}

As with \refsec{Automatic}, the material in this section is mostly classical and standard. Excellent references include \cite{minc:nonnegative} and \cite[Section V.5]{flajolet-sedgewick}. See also \cite{calegari:ergodic-groups} for a brief summary from a group--theoretic perspective.

The main result of this section, \refthm{PathCount}, should be considered a folklore theorem. Although the result is very likely known to experts, we could not find a sufficiently general version in the literature. Therefore, we wrote down a proof.

\subsection{Irreducible Matrices}
For a matrix $B$, let $B_{ij}$ denote the $ij$-entry. A square matrix $A$ is \emph{irreducible} if for each $i,j$, there exists $n>0$ such that $(A^n)_{ij} > 0$.
By convention, we also regard the $1\times 1$ matrix $[0]$  as irreducible.
The following version of the Perron--Frobenius theorem summarizes the basic properties of non-negative irreducible matrices.

\begin{theorem}[Irreducible Perron--Frobenius]\label{Thm:PerronIrreducible}
Let $A$ be an irreducible, non-negative matrix that is not $[0]$. Let $\rho_A \geq 0$ be the largest absolute value of an eigenvalue of $A$. Then

\begin{enumerate}
\item $\rho_A > 0$, and is itself an eigenvalue.
\item If $A$ is integral, then $\rho_A\geq 1$.
\item\label{Itm:Eigenspace} Each of the left and right $\rho_A$--eigenspaces is
spanned by a single positive vector.
\item Any non-negative (left or right) eigenvector has eigenvalue $\rho_A$.
\item\label{Itm:Period} There are $h \geq 1$ eigenvalues of absolute value $\rho_A$. Furthermore, the spectrum of $A$ is invariant by a rotation of $\CC$ through angle  $2\pi/h$.
\end{enumerate}
\end{theorem}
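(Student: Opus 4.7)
The plan is to prove the five claims more or less in the order listed, using two recurring tools: a Collatz--Wielandt type variational description of $\rho_A$, and the strong connectivity of the directed graph $\Gamma_A$ whose adjacency matrix is $A$ (which is exactly what irreducibility encodes, since $(A^n)_{ij}>0$ iff there is a directed path $i\to j$ of length $n$).

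First I would establish the existence of a positive eigenvector with positive eigenvalue. Set
\[
\rho \;=\; \sup\bigl\{\, t\ge 0 \,:\, \exists\, x\ge 0,\ x\neq 0,\ Ax \ge t x \,\bigr\} ,
\]
where the inequality is componentwise. A compactness argument on the simplex $\Delta = \{x\ge 0 : \sum x_i = 1\}$ shows that the supremum is finite and attained by some $v\in\Delta$ with $Av\ge\rho v$. If the inequality were strict in some coordinate then a small perturbation $v + \epsilon A v$ would contradict maximality, so $Av=\rho v$. To see $v>0$, suppose some $v_i=0$; then for every $n$, $(A^n v)_i=\rho^n v_i=0$, forcing $(A^n)_{ij}=0$ whenever $v_j>0$, which contradicts irreducibility. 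Since $A\neq[0]$ and $v>0$, some row of $A$ is nonzero, giving $\rho>0$. When $A$ is integral, irreducibility provides, for each $i$, an $n_i$ with $(A^{n_i})_{ii}\ge 1$; hence $\rho^{n_i}$ is bounded below by the $i$-th diagonal entry of $A^{n_i}$ along the spectral decomposition, and an elementary estimate forces $\rho\ge 1$, proving (1) and (2).

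Next I would show $\rho=\rho_A$ and derive items (3) and (4) together. For an arbitrary eigenpair $Ay=\mu y$ (possibly complex), taking componentwise absolute values gives $A|y|\ge|\mu|\,|y|$, which by the definition of $\rho$ forces $|\mu|\le\rho$; so $\rho=\rho_A$ and (1) is complete. For the simplicity claim in (3), suppose $w$ is any real eigenvector with $Aw=\rho w$. Choose $c>0$ so that $z:=v-cw\ge 0$ has at least one zero coordinate. Then $Az=\rho z$, and the positivity argument above applied to $z$ forces $z=0$, so $w$ is a multiple of $v$. The same argument applied to $A^T$ (which is irreducible with the same $\rho$) yields a positive left eigenvector $u^T$ spanning the left $\rho$-eigenspace. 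For (4), if $y\ge 0$, $y\neq 0$ satisfies $Ay=\mu y$, then
\[
\mu\,(u^T y) \;=\; u^T(Ay) \;=\; (u^TA)y \;=\; \rho\,(u^T y),
\]
and $u^T y>0$ since $u>0$ and $y$ is nonnegative and nonzero, so $\mu=\rho$.

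Finally, the main obstacle is item (5), the cyclic structure of the peripheral spectrum. The plan is to let $h$ denote the number of eigenvalues of modulus $\rho_A$, or equivalently the period of $\Gamma_A$ (the gcd of lengths of directed cycles). If $\omega=e^{2\pi i/h}$, one builds a diagonal unitary $D$ whose $j$-th entry is $\omega^{\ell(j)}$, where $\ell(j)$ is the length modulo $h$ of any directed path from a chosen basepoint to vertex $j$; well-definedness of $\ell$ modulo $h$ is exactly the statement that $h$ divides every cycle length. A direct check using strong connectivity shows $D^{-1}AD=\omega A$, so the spectrum of $A$ is invariant under multiplication by $\omega$, proving the rotational symmetry. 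Conversely, if some eigenvalue $\mu\neq\rho_A$ has $|\mu|=\rho_A$, then writing $\mu=\rho_A e^{i\theta}$ and applying the argument of the previous paragraph to $A|y|=\rho_A|y|$ with $y$ a $\mu$-eigenvector yields a diagonal unitary $D$ with $D^{-1}AD=e^{i\theta}A$, forcing $e^{i\theta}$ to be an $h$-th root of unity. Thus the peripheral spectrum coincides with $\{\rho_A\omega^k\}_{k=0}^{h-1}$, completing the proof.
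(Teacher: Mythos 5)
Your proposal is correct in outline, but note that the paper does not actually prove this theorem: its ``proof'' is a citation to several results in Minc's book (Theorem I.4.1, Corollary I.4.2, Theorem I.4.4, Theorem III.1.2). What you have written is a self-contained version of the standard textbook argument --- the Collatz--Wielandt variational characterization of $\rho_A$ for items (1)--(4), and Wielandt's diagonal-unitary conjugation $D^{-1}AD=\omega A$ for the cyclic structure of the peripheral spectrum in item (5). That buys independence from the reference at the cost of length, which is presumably why the authors chose to cite instead. Two small points in your sketch need the standard repair. First, the supremum defining $\rho$ is not obviously attained on the simplex $\Delta$ (the function $x\mapsto\min_{i:x_i>0}(Ax)_i/x_i$ fails to be continuous at the boundary), and your perturbation $v+\epsilon Av$ does not by itself upgrade $Av\ge\rho v$, $Av\neq\rho v$ to a contradiction: in a coordinate where both $(Av-\rho v)_i=0$ and $(A(Av-\rho v))_i=0$ you cannot increase $t$. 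Both issues are fixed the same way, by applying $(I+A)^{k-1}$, which is a strictly positive matrix when $A$ is $k\times k$ irreducible; one restricts the variational problem to the compact set $(I+A)^{k-1}\Delta$ in the open positive orthant, and uses $(I+A)^{k-1}(Av-\rho v)>0$ in the perturbation step. Second, in item (5) you define $h$ simultaneously as the number of peripheral eigenvalues and as the gcd of cycle lengths; to avoid circularity, define $h$ as the gcd, prove both inclusions as you indicate (rotation invariance via $D^{-1}AD=\omega A$, and the converse via the equality case of $A|y|\ge\rho_A|y|$), and only then conclude the two quantities agree. With those repairs the argument is complete.
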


The integer $h=h(A)$ is called the \emph{period} of the matrix $A$, and $\rho_A$ is called its \emph{Perron--Frobenius eigenvalue}. If $h(A) = 1$, we say $A$ is \emph{aperiodic}.

\begin{proof}
This is a combination of several results in  \cite{minc:nonnegative}. See Theorem I.4.1, Corollary I.4.2, Theorem I.4.4, and Theorem III.1.2.
\end{proof}

The discussion of irreducible matrices connects to the above discussion of regular languages as follows.

 Let $\Gamma$ be a directed graph with vertices  $v_1, \ldots, v_k$.
  We say  $v_j$ is \emph{reachable} from $v_i$ if there is a directed path from $v_i$ to $v_j$.
 The graph is called \emph{strongly connected} if every vertex is reachable from every other. Any finite directed graph decomposes into a collection \emph{strongly connected components}, linked by a directed acyclic graph. See \cite[Figure V.15]{flajolet-sedgewick}.

Now, suppose that $\Gamma$ is a finite state automaton
with start state $v_1$,
 and let $L$ be the regular language accepted by $\Gamma$.
We gain information about the growth of $L$ by studying the adjacency matrix $A = A(\Gamma)$, where the entry $A_{ij}$ equals the number of edges from $v_i$ to $v_j$. The following lemma is a nearly immediate consequence of the definition of $A$. Compare \cite[Theorems IV.3.2 and IV.3.3]{minc:nonnegative}.

\begin{lemma}\label{Lem:IJCount}
Let $\Gamma$ be a finite directed graph with adjacency matrix $A$.
Then
\begin{enumerate}
\item $(A^n)_{ij}$ is the number of  length~$n$ paths from $v_i$ to $v_j$.
\item $\Gamma$ is strongly connected if and only if $A$ is irreducible.
\item If $\Gamma$ is strongly connected and has at least one cycle, the period $h(A)$ is equal to the greatest common divisor of the lengths of cycles in $\Gamma$.
\end{enumerate}
\end{lemma}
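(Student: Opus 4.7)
The three parts build on each other, so I would prove them in order.

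For part (1), I would proceed by induction on $n$. The base case $n=1$ is immediate from the definition: $A_{ij}$ counts directed edges from $v_i$ to $v_j$, which are exactly the length-$1$ paths. For the inductive step, expand
\[
(A^{n+1})_{ij} \;=\; \sum_{k=1}^{|V(\Gamma)|} (A^n)_{ik}\, A_{kj}.
\]
Every length-$(n+1)$ directed path from $v_i$ to $v_j$ decomposes uniquely as a length-$n$ path from $v_i$ to an intermediate vertex $v_k$ followed by a single edge from $v_k$ to $v_j$. The inductive hypothesis counts the first factor; the definition of $A$ counts the second; summing over $k$ gives the claim.

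For part (2), I would translate each side through part (1). By definition $\Gamma$ is strongly connected exactly when, for every ordered pair $(i,j)$, there is a directed path from $v_i$ to $v_j$; by part (1) this is equivalent to the existence of some $n>0$ with $(A^n)_{ij}>0$, which is precisely the definition of irreducibility. The one subtlety is the case of a single vertex with no loop, where the graph is vacuously strongly connected and $A=[0]$ is declared irreducible by convention; this edge case is handled by fiat. Note that for $i=j$ in the multi-vertex strongly connected case, one obtains a closed walk through $v_i$ by concatenating paths $v_i\to v_k\to v_i$ coming from strong connectivity.

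For part (3), I would use the alternative characterization of the period (see Minc, Theorem III.1.2): for an irreducible matrix $A$, the period $h(A)$ equals $\gcd\{\, n>0 : (A^n)_{ii}>0 \,\}$ for any fixed index $i$, and this gcd is independent of $i$. By part (1), this set is exactly the set of lengths of closed directed walks through $v_i$. It then remains to observe that the gcd of the lengths of all closed walks in $\Gamma$ based at $v_i$ equals the gcd of the lengths of all (simple) cycles in $\Gamma$. One direction is clear since every simple cycle, conjugated by a path from $v_i$ and its reverse only up to strong connectivity, yields closed walks at $v_i$ whose length differs from the cycle length by the length of the conjugating round trip (so a pair of such round trips cancels in the gcd). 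Conversely, any closed walk decomposes as a concatenation of simple cycles, so its length is a sum of cycle lengths and hence divisible by $\gcd$ of cycle lengths. Combining these two containments gives equality.

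The main obstacle is the last gluing argument in part (3): translating between closed walks based at a specific vertex $v_i$ and arbitrary simple cycles in $\Gamma$ requires using strong connectivity to move basepoints, and one must check that the round-trip lengths introduced in the process do not disturb the gcd. Everything else is a direct matrix calculation or a bookkeeping exercise.
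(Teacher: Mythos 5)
Your proof is correct; the paper itself gives no argument for this lemma, merely calling it a nearly immediate consequence of the definition of $A$ and citing Minc (Theorems IV.3.2 and IV.3.3), so your write-up simply supplies the standard details that the authors delegate to the reference. The only point where you lean on an external fact is the identification of the spectral period $h(A)$ with $\gcd\{n>0 : (A^n)_{ii}>0\}$, which is exactly the content of the cited Minc Theorem III.1.2 (and is the same delegation the paper makes), and your basepoint-moving argument correctly handles the passage from closed walks at a fixed vertex to arbitrary cycles by differencing the round-trip lengths inside the gcd.
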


\subsection{Weighted graphs}

For some of our applications, we need to work in the more general context of weighted graphs. A \emph{weighted graph} is a (finite) directed graph with positive real numbers (called \emph{weights}) assigned to edges.
Define the \emph{weight} of a path to be the product of the weights of its edges, counted with multiplicity.

For every pair of vertices $v_i, v_j$ of a weighted graph $\Gamma$, we shall assume without loss of generality that there is at most one edge from $v_i$ to $v_j$. This is because several weighted edges can be replaced by a single edge, labeled by the sum of the weights.

Weighted   graphs are in $1$--$1$ correspondence with non-negative square matrices. This can be seen as follows. Given a weighted graph $\Gamma$, the matrix $A(\Gamma)$ has entry $A_{ij}$ equal to the  weight on the edge from $v_i$ to $v_j$ (or $0$ if no such edge exists). Conversely, given a non-negative matrix $A$, we may recover a weighted graph $\Gamma$ by constructing an edge $v_i$ to $v_j$ with weight $A_{ij}$, whenever $A_{ij} \neq 0$.

Note that \reflem{IJCount} holds for weighted graphs, with ``the number of length $n$ paths'' interpreted as ``the total weight of the length $n$ paths.''

We may also state a general form of the Perron--Frobenius theorem. See \cite[Lemma VI.1.1]{minc:nonnegative}.

\begin{theorem}[General form of Perron--Frobenius]\label{Thm:PerronGeneral}
Let $\Gamma$ be a (weighted) directed graph with vertices $v_1, \ldots, v_k$, where $v_1$ is the start state. Then there is a reordering of $v_2, \ldots, v_k$, corresponding to a permutation matrix $P$, which conjugates $A = A(\Gamma)$ to a matrix $P^{-1} A P$ with the following properties:
\begin{enumerate}
\item $P^{-1} A P$ is block upper-triangular.
\item The diagonal blocks $B_1, \ldots, B_m$ of $P^{-1} A P$ are irreducible, and correspond to the strongly connected components of $\Gamma$.
\item The spectrum of $A$ is the union of the spectra of the diagonal blocks $B_1, \ldots, B_m$. In particular, $A$ has a Perron--Frobenius eigenvalue $\rho_A = \max \{ \rho_{B_i} \}$.
\end{enumerate}
\end{theorem}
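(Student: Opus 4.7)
The plan is to decompose $\Gamma$ into its strongly connected components and use a topological ordering of the resulting condensation to produce the desired block structure.

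First, I would identify the strongly connected components $C_1, \ldots, C_m$ of $\Gamma$ and form the condensation $\widetilde{\Gamma}$ by collapsing each $C_i$ to a single vertex. Standard graph theory tells us $\widetilde{\Gamma}$ is a finite directed acyclic graph, so it admits a topological ordering. After restricting to (or just ignoring) vertices not reachable from $v_1$---which is natural given $v_1$ is the start state, and is costless once the automaton is pruned---the component $C_0$ containing $v_1$ has no incoming edges from any other component, so it can be placed first in the topological order. Relabeling the components, I arrange that $v_1 \in C_1$ and that every condensation edge $C_i \to C_j$ with $i \neq j$ satisfies $i < j$. Within $C_1$ I place $v_1$ first, and within each remaining $C_i$ I order vertices arbitrarily. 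Let $P$ be the resulting permutation matrix (which fixes $v_1$).

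By construction, every edge of $\Gamma$ that is not internal to an SCC runs from some $C_a$ to some $C_b$ with $a < b$. Therefore in $P^{-1} A P$, all nonzero entries lying outside the diagonal blocks are above them, which is exactly the condition that $P^{-1} A P$ is block upper-triangular. The diagonal blocks $B_1, \ldots, B_m$ are precisely the weighted adjacency matrices of the subgraphs induced on the $C_i$, each of which is strongly connected. Hence, by the weighted version of \reflem{IJCount}, each $B_i$ is irreducible---where a singleton SCC with no self-loop contributes $B_i = [0]$, which is irreducible by the convention stated after \refthm{PerronIrreducible}.

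Finally, for the spectral statement I would use the standard fact that the characteristic polynomial of a block upper-triangular matrix factors as the product of those of its diagonal blocks:
\[
\det(\lambda I - A) \; = \; \det(\lambda I - P^{-1} A P) \; = \; \prod_{i=1}^{m} \det(\lambda I - B_i),
\]
so the spectrum of $A$ is the union of the spectra of the $B_i$. Applying \refthm{PerronIrreducible} to each $B_i$ provides a Perron--Frobenius eigenvalue $\rho_{B_i}$, and the largest absolute value of any eigenvalue of $A$ is then $\rho_A = \max_i \rho_{B_i}$, which is itself an eigenvalue. The only mildly delicate point is arranging for $v_1$ to remain the first vertex while simultaneously respecting the topological ordering; this forces the reachability/pruned reduction at the start. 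Beyond that, the argument is routine linear algebra combined with the SCC decomposition.
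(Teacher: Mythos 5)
Your proof is correct and is the standard Frobenius normal form argument (strongly connected components plus a topological ordering of the condensation, followed by the factorization of the characteristic polynomial of a block upper-triangular matrix); the paper gives no proof of this statement at all, merely citing Minc, Lemma VI.1.1, which establishes exactly this normal form. The one point you rightly flag --- keeping $v_1$ first --- is a genuine constraint: as literally stated the theorem requires every vertex to be reachable from $v_1$ (otherwise an edge from an unreachable component into $v_1$'s component would sit below the block diagonal), and your reachability/pruning reduction is precisely how this is satisfied in all of the paper's applications, where $\Gamma$ is assumed pruned.
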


An irreducible block $B_i$ is called \emph{maximal} if $\rho_{B_i} = \rho_A$. In this situation, the strongly connected component $\Gamma_i \subset \Gamma$ corresponding to $B_i$ is also called \emph{maximal}.

We will also need the following monotonicity result about Perron--Frobenius eigenvalues. See \cite[Corollary II.2.2]{minc:nonnegative}.

\begin{theorem}\label{Thm:Domination}
Let $A$ and $B$ be non-negative $k \times k$ matrices. If $A\leq B$ in the sense that $A_{ij}\leq B_{ij}$ for each $i,j$, then
$\rho_A\leq \rho_B$. Moreover, if $B$ is irreducible and $A\leq B$ but $A \neq B$, then $\rho_A<\rho_B$.
\end{theorem}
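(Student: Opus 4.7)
The plan is to handle the two assertions separately, using entrywise monotonicity for the weak inequality and the positive Perron eigenvector of $B$ for the strict inequality.

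For the weak inequality $\rho_A\le\rho_B$, I would argue entrywise: since $0\le A\le B$, an easy induction on $n$ gives $0\le A^n\le B^n$ entrywise, because each entry of $A^n$ is a sum of products of non-negative entries dominated term-by-term by the corresponding sum for $B^n$. Hence for any entrywise-monotone, submultiplicative matrix norm $\|\cdot\|$ (for instance, the sum of all entries, or the $\ell^\infty$ operator norm) we have $\|A^n\|\le \|B^n\|$. Gelfand's spectral radius formula $\rho_M=\lim_{n\to\infty}\|M^n\|^{1/n}$ then gives $\rho_A\le\rho_B$.

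For the strict inequality when $B$ is irreducible, the key tool is the positive left Perron eigenvector $u^\top>0$ of $B$, produced by \refthm{PerronIrreducible}\refitm{Eigenspace}, together with the trick of passing to absolute values of a complex eigenvector of $A$. Let $\lambda$ be an eigenvalue of $A$ with $|\lambda|=\rho_A$ and let $w\ne 0$ satisfy $Aw=\lambda w$. Because $A\ge 0$, the triangle inequality applied entrywise yields $A|w|\ge|Aw|=\rho_A|w|$. Pairing with $u^\top$ and using $u^\top A\le u^\top B=\rho_B u^\top$ (since $A\le B$ and $u>0$) gives
\[
\rho_A\, u^\top|w| \;\le\; u^\top A|w| \;\le\; u^\top B|w| \;=\; \rho_B\, u^\top|w|.
\]
As $u>0$ and $|w|\ge 0$ with $|w|\ne 0$, we have $u^\top|w|>0$, which already re-proves $\rho_A\le\rho_B$.

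The strict inequality then comes from a chase of equality cases. Suppose, for contradiction, that $\rho_A=\rho_B$. The display above collapses to equalities throughout, so $u^\top(B-A)|w|=0$; since $u>0$ and $(B-A)|w|\ge 0$, this forces $(B-A)|w|=0$. Consequently $B|w|=A|w|+(B-A)|w|=\rho_A|w|=\rho_B|w|$, so $|w|$ is a non-negative eigenvector of the irreducible matrix $B$ for $\rho_B$. By the uniqueness clause in \refthm{PerronIrreducible}\refitm{Eigenspace}, $|w|$ is a positive multiple of the Perron vector of $B$, and in particular $|w|>0$ strictly. But then $(B-A)|w|=0$ with $B-A\ge 0$ entrywise and $|w|$ coordinatewise positive forces $B-A=0$, contradicting $A\ne B$. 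The main obstacle is exactly this last step: $A$ need not be irreducible and so may fail to admit a positive eigenvector of its own, and the absolute-value trick combined with irreducibility of $B$ is what promotes the non-negative vector $|w|$ to a strictly positive one, enabling the contradiction.
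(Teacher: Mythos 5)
Your proof is correct and complete. Note that the paper does not actually prove this statement; it simply cites \cite[Corollary~II.2.2]{minc:nonnegative}, and your argument — entrywise monotonicity of $A^n\le B^n$ plus Gelfand's formula for the weak inequality, and the left Perron eigenvector of $B$ combined with the triangle-inequality trick $A|w|\ge\rho_A|w|$ and an equality-case chase for the strict one — is essentially the standard proof found in that reference. The one step worth keeping explicit, which you do handle, is that equality in the \emph{first} inequality of your display is what yields $A|w|=\rho_A|w|$ and hence $B|w|=\rho_B|w|$, after which irreducibility of $B$ promotes $|w|$ to a strictly positive vector and forces $B-A=0$.
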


\subsection{Counting paths and words }
Let $L$ be a regular language with finite state automaton $\Gamma$. We allow the edges of $\Gamma$ to carry positive weights, which means that each
 word of $L$ also carries a weight, equal to the weight of the corresponding path in $\Gamma$.
Let $L_n$ denote the set of words of length exactly $n$, and $L_{\leq n}$  the set of words of length at most $n$. We let $w(L_n)$ denote the total weight of all the words of length $n$, and similarly for $w(L_{\leq n})$.
For a parallel with equation~\refeqn{fH}, define
\begin{equation}\label{Eqn:fL}
f_L(n) = w(L_{\leq n}).
\end{equation}
Note that if all the weights are $1$, then $f_L(n)$ is the number of words in $L_{\leq n}$.

The following result relates the growth of $w(L_n)$ to the Perron--Frobenius eigenvalue of $A(\Gamma)$.

\begin{proposition}\label{Prop:PolyExp}
Let $L$ be an infinite regular language with a pruned, weighted automaton $\Gamma$.
Let $A$ be the adjacency matrix of $\Gamma$. Then there is a number $0 < \tau < \rho_A$ and a positive integer $h$, such that for each $s\in\{1,\ldots, h\}$ there is a polynomial $\pi_s(n)$ with
\begin{equation}\label{Eqn:PolyExp}
w( L_{ n} ) = \pi_s(n) \, \rho_A^n + O(\tau^n),
\end{equation}
for each $n\in \naturals$ with  $n \equiv s \mod h$. Furthermore, $\pi_s(n) \neq 0$ for at least one $s$.
\end{proposition}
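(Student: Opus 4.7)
The plan is to write $w(L_n) = \mathbf{u}^T A^n \mathbf{v}$, where $\mathbf{u}$ is the indicator vector of the start state $v_1$ and $\mathbf{v}$ the indicator of the accept states, and then extract the asymptotics of this sequence from the spectral structure of $A$. Equivalently, one can study the rational generating function $F(z) = \sum_n w(L_n) z^n = \mathbf{u}^T(I-zA)^{-1}\mathbf{v}$ and analyze its poles by partial fractions; both routes give the same answer.

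By \refthm{PerronGeneral} we may assume $A$ is block upper triangular with irreducible diagonal blocks $B_1,\ldots,B_m$; the \emph{maximal} blocks are those with $\rho_{B_i}=\rho_A$. For each maximal $B_i$, \refthm{PerronIrreducible} tells us the eigenvalues on the circle of radius $\rho_A$ are $\rho_A$ times $h_i$-th roots of unity, while all other eigenvalues of $A$ have modulus strictly less than $\rho_A$. Let $h$ be a common multiple of the $h_i$, and pass to Jordan normal form of $A$. The Jordan blocks for eigenvalues of modulus $<\rho_A$ together contribute an error $O(\tau^n)$ for any $\tau$ strictly between $\max_{|\lambda|<\rho_A}|\lambda|$ and $\rho_A$. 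The remaining Jordan blocks, whose eigenvalues are $\rho_A\omega$ with $\omega^h=1$, contribute terms of the form $p_\omega(n)(\rho_A\omega)^n$. When we restrict to $n\equiv s\pmod h$, each $\omega^n$ becomes a constant depending only on $s$, so these contributions collapse to $\pi_s(n)\rho_A^n$ for a polynomial $\pi_s$, yielding the stated asymptotic formula.

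For the non-vanishing statement, I argue by contradiction: suppose every $\pi_s$ is identically zero, so $w(L_n)=O(\tau^n)$. Since $\Gamma$ is pruned and $L$ is infinite, at least one maximal strongly connected component $\Gamma_i$ contains a vertex $v$ lying on some accepted start-to-accept path. Concatenating an initial segment from $v_1$ to $v$ of positive weight $W_1$, an $m$-step loop inside $\Gamma_i$ based at $v$ of weight $(B_i^m)_{vv}$, and a terminal segment from $v$ to an accept state of weight $W_2>0$, and applying the irreducible Perron--Frobenius asymptotic $(B_i^m)_{vv}\sim c_i\rho_A^m$ along $m\equiv 0\pmod{h_i}$, produces accepted paths whose total weight is at least $C\rho_A^n$ for infinitely many $n$. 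This contradicts $w(L_n)=O(\tau^n)$ since $\tau<\rho_A$, so at least one $\pi_s\not\equiv 0$.

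The main technical subtlety is the possibility that $\pi_s$ has positive degree: such polynomial growth comes from Jordan blocks of size $>1$ at the dominant eigenvalues, which in turn arise from chains of distinct maximal components linked by nonzero off-diagonal entries in the block-triangular form of $A$. Tracking the interplay of the different periods $h_i$ across maximal components is handled cleanly by passing to a common multiple $h$, but one must check that the cross-terms between different maximal blocks still combine into a single polynomial in $n$ within each residue class mod $h$.
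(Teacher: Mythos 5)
Your proposal is correct and follows essentially the same route as the paper: express $w(L_n)=\mathbf{u}^T A^n\mathbf{v}$, use the block-triangular/Perron--Frobenius/Jordan-form analysis for the asymptotic expansion, and establish non-vanishing via the same concatenation of an initial segment, a loop in a maximal component, and a terminal segment. The only (cosmetic) differences are that the paper passes to $A^h$ so that all peripheral eigenvalues become equal before applying the Jordan argument, whereas you keep the roots of unity and let them collapse on residue classes mod $h$, and the paper phrases the lower bound as a direct sub-language estimate rather than a contradiction.
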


Similar statements appear in \cite[Theorem V.3]{flajolet-sedgewick} and \cite[Proposition 3.1.4]{calegari:ergodic-groups}. However, those results do not identify the exponential growth rate of $w(L_n)$ as the Perron--Frobenius eigenvalue $\rho_A$.

To avoid breaking up the exposition, we postpone the proof of \refprop{PolyExp} to \refsec{GrowthRate}. For now, we derive the following important consequence.

\begin{theorem}\label{Thm:PathCount}
Let $L$ be an infinite regular language with a pruned, weighted automaton $\Gamma$.
Let $A$ be the adjacency matrix of $\Gamma$.
Then 
\[
\lim_{n \to \infty} \sqrt[n]{ f_L(n) } = \lim_{n \to \infty} \sqrt[n]{ w( L_{\leq n} )} = \max \,  \{  \rho_A, \, 1 \}.
\]
\end{theorem}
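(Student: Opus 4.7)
The plan is to derive \refthm{PathCount} as a direct consequence of \refprop{PolyExp}, by summing the asymptotic expansion of $w(L_n)$ over $k \leq n$ and separating the dominant geometric rates. First I would note that $\rho_A > 0$: since $L$ is infinite and $\Gamma$ is pruned, the graph $\Gamma$ contains a directed cycle, so $A$ is not nilpotent. Hence \refprop{PolyExp} supplies a period $h$, polynomials $\pi_1, \ldots, \pi_h$ (not all identically zero), and a number $0 < \tau < \rho_A$ with $w(L_n) = \pi_s(n)\rho_A^n + O(\tau^n)$ whenever $n \equiv s \pmod{h}$.

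The key preliminary observation is that the leading coefficient of each nonzero $\pi_s$ must be positive. This is forced by the non-negativity of $w(L_n)$ together with the fact that $\tau < \rho_A$ makes the error term negligible compared to the polynomial part: rewriting the estimate as $\pi_s(n) \geq -O((\tau/\rho_A)^n)$ pins down the sign of the dominant coefficient. I expect this sign check to be the main obstacle, since without it the lower bound on $f_L(n)$ could collapse. Once in place, one obtains for some $s_0$ with $\pi_{s_0} \not\equiv 0$ and $D = \deg \pi_{s_0}$ a constant $c > 0$ such that $w(L_n) \geq c\, n^D \rho_A^n$ for every sufficiently large $n$ in the arithmetic progression $\{s_0 + kh : k \geq 0\}$.

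For the lower bound on $\sqrt[n]{f_L(n)}$, given any large $n$ I would take the unique $m$ with $n - h < m \leq n$ and $m \equiv s_0 \pmod{h}$, giving $f_L(n) \geq w(L_m) \geq c(n-h)^D \rho_A^{n-h}$; taking $n$-th roots yields $\liminf_n \sqrt[n]{f_L(n)} \geq \rho_A$. Separately, since $L$ is nonempty and positively weighted, $f_L(n)$ is eventually bounded below by a positive constant, so trivially $\liminf_n \sqrt[n]{f_L(n)} \geq 1$.

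For the matching upper bound, I would dominate $w(L_k)$ by $M k^D \rho_A^k + M \tau^k$ uniformly in $k$ (absorbing the finitely many $\pi_s$ and the implicit constants into one $M$) and sum a geometric-type series. When $\rho_A \geq 1$, the first sum is $O(n^{D+1} \rho_A^n)$ and the second is $o(\rho_A^n)$ because $\tau < \rho_A$; taking $n$-th roots yields $\limsup_n \sqrt[n]{f_L(n)} \leq \rho_A$. When $\rho_A < 1$, both series converge, so $f_L(n)$ remains bounded and $\sqrt[n]{f_L(n)} \to 1$. In either case $\limsup_n \sqrt[n]{f_L(n)} \leq \max\{\rho_A, 1\}$, which matches the lower bound and yields the claimed limit.
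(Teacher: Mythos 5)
Your proposal is correct and follows essentially the same route as the paper: both deduce the theorem from Proposition~\ref{Prop:PolyExp} by summing the asymptotic expansion of $w(L_n)$ over $k\leq n$ and extracting the dominant rate, splitting into the cases $\rho_A\geq 1$ and $\rho_A<1$. Your explicit check that each nonzero $\pi_s$ has positive leading coefficient (forced by $w(L_n)\geq 0$ together with $\tau<\rho_A$) is a point the paper leaves implicit when it asserts that the summed polynomial $q_s$ is nonzero and that $q_s(n)\rho_A^n$ dominates, so this is a worthwhile clarification rather than a genuine departure.
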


\begin{proof}
Suppose, as a warm-up case, that $h = 1$ in \refprop{PolyExp}. That is, suppose there is a single nonzero polynomial $p(n)$ such that
\[ w(L_n) = p(n)  \rho_A^n + O(\tau^n),\]
where $\tau < \rho_A$. In this case, there is a different polynomial $q(n)$, of the same degree as $p(n)$, such that
\begin{equation}\label{Eqn:LnSum}
w(L_{\leq n}) = \sum_{i = 0}^n w(L_i) =  q(n)  \rho_A^n + O(\tau^n) + O(1).
\end{equation}
One way to derive \refeqn{LnSum} is to apply the method of summation by parts. Another way is to approximate the sum as an integral, and perform integration by parts:
\[
\sum_{i = 0}^n w(L_i) \sim \sum_{i = 0}^n p(i) \rho_A^i \sim \int_0^n p(x) \rho_A^x \, dx =  q(n) \rho_A^n -  q(0) \rho_A^0.
\]

In general,   \refeqn{PolyExp} expresses $w(L_n)$  in terms of a polynomial $\pi_s(n)$ that depends on $s \equiv n \mod h$.
 Summing together $h$ consecutive terms of \refeqn{PolyExp} removes this dependence:
\[
\sum_{i=rh+1}^{rh+h} w(L_{i}) = p(rh)  \rho_A^{rh} + O(\tau^{rh}),
\]
where $p$ is a nonzero polynomial.
Therefore, when we calculate $w(L_{\leq n})$ for $n = rh+s$, the sum of the first $rh$ terms is independent of $s$. Thus we get the following analogue of \refeqn{LnSum}:
\begin{equation}\label{Eqn:LnSumGeneral}
w(L_{\leq n}) = \sum_{i = 0}^{rh} w(L_i) + \sum_{i = rh+1}^{rh+s} w(L_i) =  q_s(n)  \rho_A^n + O(\tau^n) + O(1),
\end{equation}
where $\deg q_s(n) = \deg p(n)$, hence $q_s(n) \neq 0$.

We now analyze the asymptotics as $n \to \infty$. If $\rho_A \geq 1$, the term $q_s(n)  \rho_A^n$ is dominant in \refeqn{LnSumGeneral}.
 Therefore,
\[
\lim_{n \to \infty} \sqrt[n]{ w(L_{\leq n})} = \lim_{n \to \infty}  \sqrt[n]{ q_s(n) \rho_A^n } =
 \rho_A.
\]
Otherwise, if $\rho_A < 1$, if follows that $\tau < 1$. Hence the dominant term in \refeqn{LnSumGeneral} will be $O(1)$, and
\[
\lim_{n \to \infty} \sqrt[n]{ w(L_{\leq n})} = \lim_{n \to \infty} \sqrt[n]{  O(1)}  = 1. \qedhere
\]
\end{proof}

An immediate consequence of Theorems~\ref{Thm:Automatic} and \ref{Thm:PathCount} is the following version of Cannon's theorem on growth rates:

\begin{corollary}\label{Cor:CannonGrowth}
Let $G$ be an infinite hyperbolic group, with finite symmetric generating set $S$. Let $L$ be a geodesic regular language mapping bijectively to $G$. Then
\[
\lambda_G =  \rho_A
 =
\lim_{n \to \infty} \sqrt[n]{ f_L(n) }
 =  \lim_{n \to \infty} \sqrt[n]{ f_{G,\Upsilon}( n) } ,
\]
where $\rho_A$ is the Perron--Frobenius eigenvalue of any pruned automaton for $L$.
\end{corollary}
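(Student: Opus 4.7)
The plan is to combine \refthm{Automatic} with \refthm{PathCount} through a direct identification of growth functions. First, I would observe that because $L$ is a geodesic language mapping bijectively to $G$, each group element $g$ with $\dist_\Upsilon(1,g) = n$ corresponds to exactly one word of length $n$ in $L$: bijectivity prevents overcounting, and the geodesic hypothesis identifies the word length with the graph distance. Equipping the pruned automaton $\Gamma$ of $L$ with trivial weights (weight $1$ on each edge), this yields the identity $f_L(n) = f_{G,\Upsilon}(n)$ for every $n$. Such an $L$ exists by \refthm{Automatic} applied to any short-lex ordering of $S$, so the hypothesis of the corollary is not vacuous.

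Next, I would apply \refthm{PathCount} to $L$ to obtain
\[
\lim_{n\to\infty}\sqrt[n]{f_L(n)} = \max\{\rho_A,\,1\}.
\]
The only remaining wrinkle is to verify that this maximum equals $\rho_A$, that is, that $\rho_A \geq 1$. Because $G$ is infinite, so is $L$; since $\Gamma$ is a finite pruned automaton accepting infinitely many words, every non-redundant edge lies on some accepted path and $\Gamma$ must contain a directed cycle. In the block decomposition furnished by \refthm{PerronGeneral}, the strongly connected component containing that cycle yields an irreducible diagonal block $B_i$ that is not $[0]$, and \refthm{PerronIrreducible}\refitm{Period}, together with integrality of the weights, gives $\rho_A \geq \rho_{B_i} \geq 1$.

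Stringing these facts together produces
\[
\lim_{n\to\infty}\sqrt[n]{f_{G,\Upsilon}(n)} \; = \; \lim_{n\to\infty}\sqrt[n]{f_L(n)} \; = \; \rho_A,
\]
and since this limit exists it agrees with the $\limsup$ defining $\lambda_G$. I do not anticipate any real obstacle here: the analytic heart of the statement is carried entirely by \refthm{PathCount}, and the corollary itself reduces to the observation that bijectivity together with the geodesic hypothesis on $L$ transports the weighted path count in $\Gamma$ verbatim onto the ball count in $G$.
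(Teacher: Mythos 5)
Your proposal is correct and follows essentially the same route as the paper: identify $f_L(n)=f_{G,\Upsilon}(n)$ via bijectivity and the geodesic property, invoke \refthm{PathCount}, and check $\rho_A\geq 1$ using that the pruned automaton of an infinite language must contain a cycle. In fact you spell out the $\rho_A\geq 1$ step more carefully than the paper does (which just says ``$A(\Gamma)$ is an integer matrix''), though the relevant part of \refthm{PerronIrreducible} is its integrality clause rather than the item on the period.
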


\begin{proof}
Any pruned automaton $\Gamma$ for the language $L$ has weights of $1$ on the edges, hence \refeqn{fH} and \refeqn{fL} give
\[
f_{G,\Upsilon}(n) = f_L(n) = w(L_{\leq n}).
\]
Since $A(\Gamma)$ is an integer matrix, we have $\rho_A \geq 1$. Now, \refthm{PathCount} gives the result.
\end{proof}


\section{Growth rates of regular languages}\label{Sec:GrowthRate}

The goal of this section is to prove \refprop{PolyExp}, which is needed in the proof of \refthm{PathCount}.
It is worth mentioning that \cite{flajolet-sedgewick} contains special cases of the same statement: see Theorem V.3 and Proposition V.7. While the proofs in \cite{flajolet-sedgewick} are rooted in complex analysis, we will derive \refprop{PolyExp} from the Perron--Frobenius theorem plus elementary facts about non-negative matrices.

\begin{lemma}\label{Lem:PowerIteration}
Let $A$ be an irreducible,  aperiodic, $k \times k$ matrix, and  $V, W \in \RR^k$. Assume that $A,V,W$ are non-negative and nonzero. Then there are constants $C >0$ and $0 < \tau < \rho_A$ such that
\[
  V^T \! A^n W = C \rho_A^n + O(\tau^n).
\]
\end{lemma}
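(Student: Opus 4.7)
The plan is to derive the estimate from the spectral decomposition of $A$ furnished by \refthm{PerronIrreducible}, combined with the aperiodicity hypothesis.

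First, I would extract from item (3) of \refthm{PerronIrreducible} strictly positive right and left Perron eigenvectors $u, v \in \RR^k$ for $\rho_A$, rescaled so that $v^T u = 1$; this rescaling is possible because $u$ and $v$ are strictly positive, so $v^T u > 0$. Setting $\Pi := u v^T$ yields a rank-one idempotent that satisfies $A \Pi = \Pi A = \rho_A \Pi$. The aperiodicity hypothesis, via item (5) of \refthm{PerronIrreducible}, says that $h = 1$, so $\rho_A$ is the \emph{only} eigenvalue of $A$ of modulus $\rho_A$; every other eigenvalue $\mu$ satisfies $|\mu| < \rho_A$.

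Next, I would write $A = \rho_A \Pi + N$, where $N := A(I - \Pi) = (I-\Pi)A$. The orthogonality relations $\Pi N = N \Pi = 0$ give, by a straightforward induction,
\[ A^n = \rho_A^n \Pi + N^n \qquad (n \geq 1). \]
Using the $A$-invariant direct-sum decomposition $\RR^k = \image(\Pi) \oplus \ker(\Pi)$, one sees that $N$ vanishes on the Perron line $\image(\Pi)$ and agrees with $A$ on the complement $\ker(\Pi)$. Therefore the nonzero spectrum of $N$ equals the spectrum of $A$ with $\rho_A$ removed, so $\rho(N) < \rho_A$ by the previous paragraph. Fixing any $\tau \in (\rho(N), \rho_A)$, a standard matrix-norm estimate based on the Jordan form of $N$ provides a constant $K$ with $\|N^n\| \leq K \tau^n$: the polynomial prefactors coming from possibly non-trivial Jordan blocks are absorbed into the strict inequality $\tau > \rho(N)$.

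Finally, I would conclude
\[ V^T A^n W = \rho_A^n (V^T u)(v^T W) + V^T N^n W = C \rho_A^n + O(\tau^n), \]
with $C := (V^T u)(v^T W)$. The constant $C$ is strictly positive because $u$ and $v$ are strictly positive while $V$ and $W$ are non-negative and nonzero, and the error term is controlled by $|V^T N^n W| \leq \|V\| \|W\| \|N^n\|$. The main technical point requiring care is precisely the passage from $\rho(N) < \rho_A$ to the norm bound $\|N^n\| = O(\tau^n)$ in the presence of Jordan blocks of $N$; beyond that, the argument is a routine application of Perron--Frobenius together with the rank-one spectral projector computation.
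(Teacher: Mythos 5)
Your proof is correct. Every step checks out: the normalization $v^T u > 0$ is legitimate because both Perron vectors are strictly positive; $\Pi = uv^T$ is indeed an idempotent commuting with $A$; the identity $A^n = \rho_A^n \Pi + N^n$ follows from $\Pi N = N\Pi = 0$; and the key spectral claim $\rho(N) < \rho_A$ holds because $\rho_A$ has a one-dimensional eigenspace spanned by $u \notin \ker \Pi$ (so $\rho_A$ cannot be an eigenvalue of $A$ restricted to $\ker\Pi$), while aperiodicity forces every other eigenvalue of $A$ to have modulus strictly below $\rho_A$. The positivity $C = (V^Tu)(v^TW) > 0$ uses exactly the hypotheses on $V,W$. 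Your route differs from the paper's in packaging rather than in substance: the paper cites the method of power iteration (from Golub--Van Loan) as a black box, concluding that the entries of $A^nW$ grow by a factor converging exponentially fast to $\rho_A$, and infers the asymptotic for $V^TA^nW$ from that; you instead unpack the underlying spectral mechanism explicitly via the rank-one projector $uv^T$ and a Jordan-form bound $\|N^n\| = O(\tau^n)$. Your version has the advantage of being self-contained and of exhibiting the constant $C$ in closed form, which also makes the passage from ``ratio converges to $\rho_A$ exponentially fast'' to the precise statement $C\rho_A^n + O(\tau^n)$ completely transparent --- a step the paper's phrasing leaves slightly implicit. The only point deserving a remark is that the argument for $\rho(N) < \rho_A$ uses only the \emph{geometric} simplicity of $\rho_A$ (item (3) of \refthm{PerronIrreducible}), which is all the cited theorem provides, so you do not need to invoke algebraic simplicity; your argument as written already respects this.
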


\begin{proof}
Since $A$ has period $1$, \refthm{PerronIrreducible}\refitm{Period} implies there is only one eigenvalue with absolute value $\rho_A$. By \refthm{PerronIrreducible}\refitm{Eigenspace}, the  eigenspace of $\rho_A$ is spanned by a single unit-length eigenvector $E$, all of whose entries are positive. Thus $W$ has a positive projection to $E$. Under these hypotheses, the method of power iteration (see e.g.\ \cite[Section 7.3.1]{golub-vanloan})
 produces a convergent sequence
\[
\frac{A^n W}{||  A^n W ||} \longrightarrow E,
\]
with an exponential rate of convergence. Thus, for $n \gg 0$, the vector $A^n W$ has all positive entries, and these entries grow by a factor converging exponentially quickly to $\rho_A$. Consequently the product $V^T (A^n W)$ also grows by a factor converging exponentially quickly to $\rho_A$.
\end{proof}

\begin{lemma}\label{Lem:Unipotent}
Let $U$ be an $m \times m$ upper triangular matrix with $1$'s on the diagonal. Then, for $n \in \naturals$, the entry $(U^n)_{ij}$ is given by a polynomial  $p_{ij}(n)$. Furthermore, the degree of $p_{ij}$ is bounded above by $|j-i|$.
\end{lemma}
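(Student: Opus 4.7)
The plan is to write $U = I + N$, where $N = U - I$ is strictly upper triangular. Since $I$ and $N$ commute, the binomial theorem yields
\[
U^n = (I+N)^n = \sum_{k=0}^{n} \binom{n}{k} N^k.
\]
Because $N$ is strictly upper triangular of size $m \times m$, it is nilpotent with $N^m = 0$. Hence the sum truncates:
\[
U^n = \sum_{k=0}^{m-1} \binom{n}{k} N^k.
\]

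The key observations are then: (i) for each fixed $k$, the binomial coefficient $\binom{n}{k} = n(n-1)\cdots(n-k+1)/k!$ is a polynomial in $n$ of degree exactly $k$; and (ii) if $N$ has nonzero entries only in positions $(r,s)$ with $s > r$, then $(N^k)_{ij}$ vanishes unless $j - i \geq k$, since each application of $N$ can only move a nonzero entry strictly further above the diagonal. (For $i \geq j$ the matrix $U^n$ is itself upper triangular with $1$ on the diagonal, so the statement $(U^n)_{ii} = 1$ and $(U^n)_{ij} = 0$ for $i > j$ is immediate; in both cases the claim on the degree of $p_{ij}$ holds trivially.)

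Combining these two observations, for $i < j$ we obtain
\[
(U^n)_{ij} = \sum_{k=0}^{j-i} \binom{n}{k} (N^k)_{ij},
\]
which is manifestly a polynomial $p_{ij}(n)$ of degree at most $j - i$, as claimed. There is no real obstacle here — the only thing to verify carefully is the shifting property of powers of a strictly upper triangular matrix, which is a routine induction on $k$: if $(N^{k-1})_{is}$ vanishes for $s - i < k-1$, then $(N^k)_{ij} = \sum_s (N^{k-1})_{is} N_{sj}$ vanishes unless some index $s$ satisfies both $s - i \geq k-1$ and $j - s \geq 1$, forcing $j - i \geq k$.
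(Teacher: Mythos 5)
Your proof is correct and follows exactly the paper's approach: the paper's own argument is the one-line observation that $U^n = ((U-I)+I)^n$ expands by the binomial theorem and truncates by nilpotency of $U-I$. You have simply supplied the routine details (the degree of $\binom{n}{k}$ in $n$ and the shifting property $(N^k)_{ij}=0$ unless $j-i\geq k$) that the paper leaves implicit.
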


\begin{proof}
Write $U^n = ((U-I)+I)^n$, and apply the binomial theorem. Since $(U-I)$ is nilpotent, all terms above degree $m$ in $(U-I)$ will vanish.
\end{proof}

\begin{lemma}\label{Lem:Jordan}
Let $J$ be a $k \times k$ matrix in Jordan form. Suppose that the first $m $ diagonal  entries are $1$, and that any other diagonal entries have absolute value less than $\tau < 1$. Then, for $n \in \naturals$ and for any row vector $W \in \RR^k$, we have
\begin{equation}\label{Eqn:JordanProduct}
W J^n = (p_1(n), \ldots, p_m(n), O(\tau^n), \ldots, O(\tau^n)),
\end{equation}
where  each $p_i(n)$ is a polynomial of degree at most $i-1$.
\end{lemma}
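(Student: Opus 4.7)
The plan is to exploit the block structure forced by the hypotheses: since the first $m$ diagonal entries of $J$ are $1$ and Jordan form is block diagonal after grouping Jordan blocks of the same eigenvalue, $J$ decomposes as
\[
J = \begin{pmatrix} U & 0 \\ 0 & T \end{pmatrix},
\]
where $U$ is the $m \times m$ upper triangular matrix formed by the Jordan blocks with eigenvalue $1$ (so $U$ has $1$'s on the diagonal), and $T$ is the remaining $(k-m) \times (k-m)$ block whose diagonal entries all have absolute value less than $\tau$. Consequently $J^n$ has the same block-diagonal form with blocks $U^n$ and $T^n$. Splitting $W = (W_1, W_2)$ with $W_1 \in \RR^m$ and $W_2 \in \RR^{k-m}$, the equation $W J^n = (W_1 U^n, \, W_2 T^n)$ reduces the proof to analyzing the two blocks separately.

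For the first $m$ coordinates, I would invoke \reflem{Unipotent}: each entry $(U^n)_{ji}$ equals a polynomial $p_{ji}(n)$ of degree at most $|j-i|$. Since $U$ is upper triangular, so is every $U^n$, so $(U^n)_{ji} = 0$ for $j > i$. The $i$-th coordinate of $W_1 U^n$ is therefore $\sum_{j=1}^{i} (W_1)_j \, p_{ji}(n)$, a polynomial in $n$ whose degree is at most $\max_{1 \le j \le i} (i-j) = i-1$, exactly as claimed.

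For the last $k-m$ coordinates, I would show that every entry of $T^n$ is $O(\tau^n)$. Writing $T = D + N$, where $D$ is the diagonal part of $T$ and $N$ is the strictly upper triangular nilpotent part (so $DN = ND$ within each Jordan block, and these commute across blocks as well since the blocks are separate), the binomial theorem gives
\[
T^n = \sum_{r=0}^{k-m-1} \binom{n}{r} N^r D^{n-r},
\]
since $N^{k-m} = 0$. Each entry of $D^{n-r}$ is bounded in modulus by $\tau'^{\,n-r}$ where $\tau' = \max_i |D_{ii}| < \tau$, and $\binom{n}{r} \leq n^r$, so each entry of $T^n$ is bounded by a fixed polynomial in $n$ times $\tau'^{\,n}$. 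Because $\tau'/\tau < 1$, the polynomial factor is absorbed and each entry of $T^n$ is $O(\tau^n)$; hence so is each entry of $W_2 T^n$.

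The only piece requiring care is the degree bookkeeping in the unipotent block, namely ensuring that the summation over $j$ does not inflate the bound beyond $i-1$; this is precisely what the upper-triangularity of $U$ (together with \reflem{Unipotent}) delivers. The geometric decay for $T$ is then a routine consequence of the strict inequality between the moduli of $T$'s eigenvalues and $\tau$.
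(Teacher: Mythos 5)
Your proof is correct and follows essentially the same route as the paper: split $J$ into the unipotent top block $U$ handled by \reflem{Unipotent} and the remaining block with eigenvalues of modulus less than $\tau$. The only difference is that you spell out, via the binomial expansion $T^n=\sum_r \binom{n}{r}N^rD^{n-r}$, why the nilpotent parts of the small Jordan blocks do not spoil the $O(\tau^n)$ decay — a detail the paper's proof asserts without elaboration.
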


\begin{proof}
The top $m \times m$ block of $J$ is a unipotent matrix $U$ as in \reflem{Unipotent}. Since $J$ is in Jordan form, the first $m$ entries of $W J^n$ are given by multiplying the first $m$ entries of $W$ by $U^n$. By \reflem{Unipotent}, these first $m$ entries are polynomials of the indicated degrees. The remaining  entries decay as $\tau^n$ because the remaining Jordan blocks have eigenvalues less than $\tau$.
\end{proof}

\begin{lemma}\label{Lem:Aperiodic}
Let $A$ be a $k \times k$ non-negative matrix with Perron--Frobenius eigenvalue $\rho_A > 0$. Suppose that any eigenvalue of absolute value $\rho_A$ must actually equal $\rho_A$. Then, for any $V \in \RR^k$ and $n \in \naturals$,
\[
V (A/\rho_A)^n = (p_1(n) + O(\tau^n) \ , \  \ldots \ ,  \  p_k(n) + O(\tau^n)),
\]
where $\tau < 1$ and $p_i(n)$ is a (possibly zero) polynomial in $n$.
\end{lemma}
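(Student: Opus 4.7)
The plan is to reduce to Lemma \ref{Lem:Jordan} via a Jordan decomposition of $B := A/\rho_A$. The eigenvalues of $B$ are those of $A$ divided by $\rho_A$, so the hypothesis on $A$ translates into: $1$ is the only eigenvalue of $B$ on the unit circle, and every other eigenvalue $\mu$ of $B$ satisfies $|\mu| < 1$. Choose $\tau \in (0,1)$ strictly greater than each such $|\mu|$.

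Write $B = PJP^{-1}$ in Jordan form over $\CC$, ordering the blocks so that all Jordan blocks for the eigenvalue $1$ are grouped at the upper left, and let $m$ be their total size. Then the first $m$ diagonal entries of $J$ equal $1$, and the remaining diagonal entries have absolute value less than $\tau$. Setting $W := V P$, we have
\[
V B^n = W J^n P^{-1}.
\]
Lemma \ref{Lem:Jordan} is stated for real row vectors, but its proof (which invokes only linearity together with Lemma \ref{Lem:Unipotent}) applies verbatim to $W \in \CC^k$, giving
\[
W J^n = \bigl(p_1(n),\, \ldots,\, p_m(n),\, O(\tau^n),\, \ldots,\, O(\tau^n)\bigr)
\]
for some polynomials $p_i$ with possibly complex coefficients. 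Right-multiplication by $P^{-1}$ takes $\CC$--linear combinations of these entries, so each coordinate of $V B^n$ has the form $q_i(n) + O(\tau^n)$ for some complex polynomial $q_i$. Since $V B^n$ is real and the error terms vanish as $n \to \infty$, the imaginary part of $q_i(n)$ must tend to $0$; and a polynomial tending to $0$ at infinity is identically $0$, so each $q_i$ has real coefficients.

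The substantive analytic content already lives in Lemmas \ref{Lem:Unipotent} and \ref{Lem:Jordan}, so the main step is bookkeeping. Two subtleties are worth flagging. First, a Jordan block for the eigenvalue $1$ may have size greater than $1$, which is precisely why the $p_i(n)$ are genuine polynomials rather than constants; this is handled by the unipotent argument of Lemma \ref{Lem:Unipotent}. Second, the passage to complex Jordan form is what forces the reality check in the last step, which is why the hypothesis $V \in \RR^k$ is used.
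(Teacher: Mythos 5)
Your proof is correct and follows essentially the same route as the paper: conjugate $A/\rho_A$ to Jordan form, apply Lemma~\ref{Lem:Jordan} to the row vector $W = VP$, and absorb the change of basis into new polynomials and $O(\tau^n)$ terms. Your added care about working over $\CC$ and checking that the resulting polynomials are real is a detail the paper elides, and it is handled correctly.
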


\begin{proof}
Note that $(A/\rho_A)$ has eigenvalue 1 with multiplicity $m \geq 1$, and all other eigenvalues have absolute value bounded by $\tau < 1$. Thus $(A/\rho_A)$ is conjugate to a matrix $J$ satisfying \reflem{Jordan}. Hence there is an invertible matrix $R$ such that
\[
V (A/\rho_A)^n = V (R^{-1} J R)^n = V R^{-1} J^n R.
\]
Applying \reflem{Jordan} to $W = V R^{-1}$, we see that $W J^n$ has the form given in \refeqn{JordanProduct}, with $m$ polynomials in the leading entries and $O(\tau^n)$ in the remaining entries. Consequently each entry of $(W J^n) R$ is of the form $p_i(n) + O(\tau^n)$ for new polynomials $p_i$.

Finally, in the special case where all eigenvalues of $J$ are $1$, then there are no $O(\tau^n)$ terms, and we may thus choose an arbitrary $0<\tau < 1$ to satisfy the statement of the lemma.
\end{proof}

We can now complete the proof of \refprop{PolyExp}.

\begin{proof}[Proof of \refprop{PolyExp}]
The proof proceeds in two steps. In Step~1, we prove that there is an integer $h$ such that
\begin{equation}\label{Eqn:PolyExpRestate}
w( L_{ n} ) = \pi_s(n) \, \rho_A^n + O(\tau^n),
\end{equation}
where $\pi_s$ is a polynomial depending on   $s \equiv n \mod h$.
This can be viewed as an upper bound on the exponential growth rate  of $w(L_n)$, which is attained if and only if $\pi_s(n) \neq 0$. In Step~2, we analyze a sub-language $L' \subset L$ and show that  it grows at least as fast as $\rho_A^n$. This gives a lower bound on $w(L_n)$ and ensures that $\pi_s(n) \neq 0$ for some $s$.

Assume, without loss of generality, that the vertices $v_1, \ldots, v_k$ of $\Gamma$ have been reordered as in \refthm{PerronGeneral}, hence $A$ is in block upper-triangular form with irreducible blocks $B_1, \ldots, B_m$. The hypothesis that $L$ is infinite ensures that every maximal component $\Gamma_i \subset \Gamma$ contains a nontrivial closed directed path, which implies that each maximal block $B_i$ is nonzero, hence $\rho_A = \rho_{B_i} > 0$ by \refthm{PerronIrreducible}.

Let $h(B_i)$ be the period of the $i$-th block. We choose a positive integer $h$ that is a multiple of each $h(B_i)$, and furthermore such that $h$ is (a multiple of) the length of some closed directed path based at some vertex $v_\ast$ in a maximal component.
By \refthm{PerronIrreducible}\refitm{Period}, every eigenvalue of $A^h$ that has absolute value $\rho_A^h$ must actually \emph{equal} $\rho_A^h$, hence $A^h$ satisfies \reflem{Aperiodic}.

\smallskip

\underline{\emph{Step 1.}}
Let $V_0 \in \RR^k$ be the row vector $(1, 0, \ldots, 0)$.
Let $V_a \in \RR^k$ be the column vector whose $j$-th entry is $1$ if $v_j$ is an accept state, and $0$ otherwise.
By \reflem{IJCount}, the total weight of the length $n$ paths in $\Gamma$ from $v_1$ to $v_j$ is $(A^n)_{1j} = (V_0 A^n)_j$. Thus the total weight of length $n$ paths from $v_1$ to accept states is
\begin{equation}\label{Eqn:LnFormula}
w(L_n) = V_0 A^n V_a .
\end{equation}

Fix  $s \in\{1,\ldots, h\}$ and suppose that $n = rh+s$ for $r \in \naturals$. By \reflem{Aperiodic}, there exist polynomials $p_i, q_i$ and constants $\sigma, \sigma' < 1$ such that
\begin{align*}
\frac{ V_0 A^n}{\rho_A^n} = \frac{V_0 A^s}{\rho_A^s} \cdot \left( \frac{A^h}{\rho_A^h} \right)^r
& = (p_1(r) + O((\sigma')^r), \ldots, p_k(r) + O((\sigma')^r) ) \\
& = (q_1(n) + O(\sigma^n), \ldots, q_k(n) + O(\sigma^n) ).
\end{align*}
The first equality above holds because  $n = rh+s$, the second equality holds by \reflem{Aperiodic}, and the third equality holds by letting  $\sigma' = \sigma^h$ and noting that polynomials in $r$ are also polynomials in $n$.

Right-multiplying by $V_a$, we obtain the following equality
for some polynomial $\pi_s$ and some constant  $\sigma < 1$. This is equivalent to \refeqn{PolyExpRestate}.
\[
\frac{w(L_n)}{\rho_A^n} = \frac{V_0 A^n V_a}{\rho_A^n} = \pi_s(n) + O(\sigma^n) .
\]

\smallskip

\underline{\emph{Step 2.}}
It remains to show that $\pi_s(n) \neq 0$ for at least one $s$. To that end, we will construct a sub-language $L' \subset L$, which grows roughly as quickly as $\rho_A^n$. More precisely, we find $s \in\{1,\ldots, h\}$ and constants $C > 0$ and $\tau < \rho_A$, such that
\begin{equation}\label{Eqn:SublangCount}
w(L_n) \ \geq \ w(L'_n) \ = \ C \rho_A^n + O(\tau^n)
\qquad \mbox{for} \quad n \equiv s \mod h.
\end{equation}
Comparing  \refeqn{PolyExpRestate} to \refeqn{SublangCount}, it follows that $\pi_s(n) \neq 0$ for the corresponding $s$.

By the definition of $h$, there is a length~$h$ closed directed path based at  a state $v_\ast$ belonging to a maximal component $\Gamma_i \subset \Gamma$. Since  $\Gamma$ is pruned, there is a directed path $\gamma$ from $v_1$ to $v_\ast$ and a directed path $\delta$ from $v_\ast$ to an accept state. Let
\[
s \equiv \ell(\gamma) + \ell(\delta) \mod h.
\]

Let $L' \subset L$ be the sub-language corresponding to paths in $\Gamma$ that follow $\gamma$ from $v_1$ to $v_\ast$, then follow closed directed paths of length $rh$ based at $v_\ast$ (for some $r \in \naturals$), then follow $\delta$ to an accept state. Note that the closed paths based at $v_\ast$ must  lie in $\Gamma_i$. By construction, every word in $L'$ has length $n \equiv s \mod h$.

Let $B_i$ be the maximal irreducible block of $A$ corresponding to $\Gamma_i$. The matrix $B_i^h$ may not be irreducible, but by \refthm{PerronGeneral} it contains an irreducible block $D$ corresponding to a weighted subgraph of $\Gamma_i$ containing $v_\ast$. In addition, every eigenvalue of $B_i^h$ with absolute value $\rho_{B_i}^h = \rho_A^h = \rho_D$ must actually \emph{equal} $\rho_A^h$, hence $D$ is both irreducible and aperiodic.

By \reflem{IJCount}, some diagonal entry  $(D^r)_{jj}$ is the total weight of the length~$rh$ directed closed paths based at $v_\ast$. Let $W$ be a vector with $1$ in the $j$-th entry and $0$'s elsewhere. By \reflem{PowerIteration},  the total weight of the length $rh$ paths based at $v_\ast$ is
\[
(D^r)_{jj} = W^T D^r W = (C') \rho_D^r + O((\tau')^r) =  (C') \rho_A^{rh} + O((\tau')^r)
\]
for constants $C' > 0$ and $\tau' < \rho_A^h$.

Since words in $L'$ of length $\ell(\gamma) + \ell(\delta) + rh$ are in $1$--$1$ correspondence with closed directed paths at $v_\ast$ of length $rh$, we have
\[
w(L'_{\ell(\gamma) + \ell(\delta) + rh}) = w(\gamma) \cdot w(\delta) \cdot W^T D^r W,
\]
where $w(\gamma)$ and $w(\delta)$ are the weights of $\gamma$ and $\delta$ respectively. Thus, setting $n = \ell(\gamma) + \ell(\delta) + rh$, we obtain
\[
w(L_n) \geq w(L'_n) =  w(\gamma) \cdot w(\delta) \cdot (C') \rho_A^{rh} + O((\tau')^r) = C \rho_A^n + O(\tau^n),
\]
where $\tau' = \tau^h$. This establishes \refeqn{SublangCount}, completing the proof.
\end{proof}

\section{The torsion-free case}\label{Sec:TorsionFree}

This section gives a quick proof of the following special case of \refthm{GrowthGeneral}:

\begin{theorem}\label{Thm:GrowthTorsionFree}
Let $G$ be a non-elementary, torsion-free hyperbolic group with generating set $S$. Let $\Upsilon = \Upsilon(G,S)$ be the Cayley graph of $G$ with respect to $S$.
Let $H$ be a quasiconvex subgroup of infinite index. Then
\[
\lambda_H(\Upsilon) < \lambda_G(\Upsilon).
\]
\end{theorem}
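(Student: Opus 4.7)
The plan is to sandwich a subset $T \subset G$, coming from a free-product decomposition $\langle H, t^N \rangle = H \ast \langle t^N \rangle$ inside $G$, between $H$ and $G$, and to exploit the asymptotic information on $\#\{h \in H : \dist_\Upsilon(1,h) = n\}$ supplied by \refprop{PolyExp} to conclude $\lambda_H < \lambda_T \leq \lambda_G$.

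First, apply \refthm{Automatic} to obtain a short-lex geodesic language $L_G$ for $(G,S)$ mapping bijectively to $G$, and apply \refthm{QuasiconvexRational} to see that the sub-language $L_H \subset L_G$ of words representing elements of $H$ is regular and maps bijectively to $H$. Since words in $L_H$ are geodesics, $f_{L_H}(n) = f_{H,\Upsilon}(n)$, and similarly $f_{L_G}(n) = f_{G,\Upsilon}(n)$. If $H$ is finite or infinite cyclic, then $\lambda_H = 1 < \lambda_G$, the latter since $G$ is non-elementary hyperbolic and hence has exponential growth by \refcor{CannonGrowth}. So assume henceforth that $\lambda_H > 1$.

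Next, build the free product. Quasiconvexity of $H$ together with $[G:H] = \infty$ forces the limit set $\Lambda(H) \subset \boundary G$ to be a proper closed subset of the Gromov boundary. Choose a loxodromic $t \in G$ whose two fixed points lie in $\boundary G \setminus \Lambda(H)$; such a $t$ exists because fixed-point pairs of loxodromic elements are dense in $\boundary G \times \boundary G$. A standard ping-pong argument on suitable neighborhoods of these fixed points then produces an integer $N \geq 1$ with $\langle H, t^N \rangle = H \ast \langle t^N \rangle \leq G$. Fix any word $u \in S^*$ representing $t^N$ and set $M = |u|$. Define
\[
T \;=\; \{\, h_1 t^N h_2 t^N \cdots t^N h_k \,:\, k \geq 1,\ h_i \in H \setminus \{1\} \,\} \;\subset\; G.
\]
By the normal form theorem for the free product $H \ast \langle t^N \rangle$, distinct tuples $(h_1, \ldots, h_k)$ yield distinct elements of $T$, and each such element lies within $\dist_\Upsilon$-distance $\sum_{i=1}^k \dist_\Upsilon(1, h_i) + (k-1)M$ of the identity.

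The key step is then a generating-function bound. Let $a_n = \#\{h \in H \setminus \{1\} : \dist_\Upsilon(1,h) = n\}$ and $F(x) = \sum_{n \geq 1} a_n x^n$. Since $L_H$ is infinite, \refprop{PolyExp} applied to a pruned automaton for $L_H$ gives $a_n = \pi_s(n)\lambda_H^n + O(\tau^n)$ with $\tau < \lambda_H$ and $\pi_s \not\equiv 0$ for some residue class $s$ modulo the period; whence $F$ has radius of convergence $1/\lambda_H$ and diverges to $+\infty$ as $x \nearrow 1/\lambda_H$. The monotone continuous map $x \mapsto x^M F(x)$ therefore takes the value $1$ at some $x_0 \in (0, 1/\lambda_H)$. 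Since distinct tuples yield distinct elements of $T$ at controlled distance, a standard convolution calculation gives the lower bound
\[
f_{T,\Upsilon}(n) \;\geq\; \sum_{n' \leq n} [x^{n'}] \, \frac{F(x)}{1 - x^M F(x)},
\]
and the rational expression on the right has smallest positive singularity at $x_0$, so the partial sums grow at exponential rate $1/x_0 > \lambda_H$. Hence $\lambda_T \geq 1/x_0 > \lambda_H$. Combined with $\lambda_T \leq \lambda_G$ (trivial since $T \subseteq G$), this yields $\lambda_H < \lambda_G$. The main obstacle is the ping-pong input producing $H \ast \langle t^N \rangle \leq G$: this is exactly where torsion-freeness, quasiconvexity, and infinite index all enter. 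Once this structural input is in hand, the application of \refprop{PolyExp} and the convolution identity are routine.
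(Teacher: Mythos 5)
Your proposal is correct, and while it shares the paper's structural inputs (the automatic structure from \refthm{Automatic}, the regularity of $L_H$ from \refthm{QuasiconvexRational}, and the free product $H\ast\langle t^N\rangle$ from \refthm{FreeProduct}, which you re-derive by ping-pong on $\bdy G$ much as the paper does later in \refthm{FreeProductAlternate}), it diverges from the paper at the key counting step. The paper enlarges the pruned automaton $\Gamma_H$ to a strongly connected automaton $\Gamma_M$ accepting normal forms for the monoid generated by $H$ and $g$, and then gets the strict inequality $\rho_H<\rho_M$ from Perron--Frobenius domination (\refthm{Domination}), with $\rho_M\leq\lambda_G$ following from injectivity of $L_M\to G$. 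You instead run a Poincar\'e-series argument: \refprop{PolyExp} (with $\pi_s\not\equiv 0$) shows that $H$ is of divergence type, so $F(x)=\sum a_nx^n$ blows up as $x\nearrow 1/\lambda_H$, which forces the singularity $x_0$ of $F/(1-x^MF)$ to satisfy $x_0<1/\lambda_H$ and hence $\lambda_G\geq\lambda_T\geq 1/x_0>\lambda_H$. This is essentially the Sambusetti-style mechanism the paper itself points to in \refsec{Measure} and in the final problem of \refsec{Problems}; note that the strictness of your inequality hinges entirely on divergence at the critical exponent, which is exactly the content of the nonvanishing of some $\pi_s$ in \refprop{PolyExp}, so you are using the same hard input as the paper but packaging it analytically rather than spectrally. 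Your approach has the advantage of not needing the strict monotonicity of Perron--Frobenius eigenvalues or the strong connectivity of $\Gamma_M$ (only the crude asymptotics of $a_n$ and injectivity of normal forms), while the paper's approach stays entirely within finite linear algebra; both are complete proofs.
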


In addition to the background in Sections~\ref{Sec:Automatic} and \ref{Sec:Perron}, the proof of \refthm{GrowthTorsionFree} uses the following theorem first formulated by Gromov \cite{Gromov87}.

\begin{theorem}[Free product]\label{Thm:FreeProduct}
Let $G$ be a non-elementary,  torsion-free hyperbolic group. Let $H$ be an infinite index quasiconvex subgroup. Then $\exists g \neq 1$  such that $\langle H, g \rangle \cong H \ast \langle g \rangle$.
\end{theorem}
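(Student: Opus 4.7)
The plan is to realize $g$ as a high power of a loxodromic element whose axis diverges from $H$, then run a ping-pong argument in the Cayley graph.

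\textbf{Step 1: Producing a loxodromic $f$ with $f^{\pm\infty} \notin \Lambda H$.} Since $H$ is quasiconvex of infinite index in the non-elementary hyperbolic group $G$, its limit set $\Lambda H \subset \partial G$ is a proper closed subset. In a non-elementary hyperbolic group, the attracting fixed points of loxodromic elements are dense in $\partial G$. Hence I can choose a loxodromic $f \in G$ whose two fixed points $f^{\pm\infty}$ both lie outside $\Lambda H$.

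\textbf{Step 2: Geometric separation.} Let $\ell_f \subset \Upsilon$ be a quasi-geodesic axis for $f$. Because $\ell_f$ has both endpoints off $\Lambda H$, and because $H$ and $\ell_f$ are both quasiconvex, standard hyperbolic geometry gives a constant $D = D(R)$ such that, for every $h \in H$, the intersection $N_R(H) \cap N_R(h \cdot \ell_f)$ has diameter at most $D$ (equivalently, $h \cdot \ell_f$ only briefly ``visits'' $N_R(H)$). Choose $N$ large enough that the translation length of $g := f^N$ exceeds $100(D + R + \delta)$, where $\delta$ is the hyperbolicity constant of $\Upsilon$.

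\textbf{Step 3: Ping-pong.} A non-trivial reduced word in $H * \langle g \rangle$ has the form
\[
w = h_0 \, g^{n_1} \, h_1 \, g^{n_2} \cdots g^{n_k} \, h_k,
\]
with every $n_i \neq 0$ and every interior $h_i \neq 1$. To show $w \neq 1$ in $G$, I build a path $\gamma$ from the identity to $w$ by concatenating geodesic segments realizing each $h_i$ (each living in a uniform neighborhood of a translate of $H$) with axis segments realizing each $g^{n_i}$ (each living in a uniform neighborhood of a translate of $\ell_f$). At each transition between an $H$-piece and a $\ell_f$-piece, Step 2 bounds the backtracking by $D$. Since every $g^{n_i}$-segment has length at least the translation length of $g$, which dwarfs the backtracking constant and $\delta$, the local-to-global principle for quasi-geodesics in a $\delta$-hyperbolic space implies that $\gamma$ is a global quasi-geodesic. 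Consequently the endpoints of $\gamma$ are distinct, so $w \neq 1$ in $G$. This gives $\langle H, g \rangle \cong H * \langle g \rangle$.

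\textbf{Main obstacle.} The real work is in Step 3: making the local-to-global quasi-geodesic criterion applicable requires quantifying how quickly translates of $\ell_f$ escape neighborhoods of translates of $H$, uniformly over the letters of $w$. The separation $\{f^{+\infty}, f^{-\infty}\} \cap \Lambda H = \emptyset$ from Step 1, combined with the quasiconvexity of both $H$ and $\langle f \rangle$, is precisely what makes the bound $D$ uniform, but carefully bookkeeping the constants (so that a single $N$ works for every reduced word $w$) is the most delicate part of the argument.
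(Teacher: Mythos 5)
Your overall strategy (choose a loxodromic $f$ whose fixed points miss $\Lambda H$, pass to a high power, run ping-pong in the Cayley graph) is sound, and Steps 1--2 are essentially right. In fact the uniformity you single out as the ``main obstacle'' comes for free: for $h\in H$ one has $N_R(H)\cap N_R(h\,\ell_f)=h\bigl(N_R(H)\cap N_R(\ell_f)\bigr)$, so everything reduces to the single standard fact that two quasiconvex subsets with disjoint limit sets have bounded coarse intersection. The genuine gap is in Step 3. The local-to-global criterion for broken geodesics requires \emph{every interior segment} to be long compared with the backtracking (Gromov products) at its endpoints, but your $H$-segments have length $|h_i|$, which can be $1$, far below $D$. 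Concretely, in a subword $g^{n_i}h_i g^{n_{i+1}}$ with $h_i$ short, the danger is that the outgoing axis segment runs back along the incoming one, since the two segments lie near translates of $\ell_f$ differing only by the short element $h_i$; Step 2 says nothing about this configuration, because it controls overlaps of axis translates with $N_R(H)$, not overlaps of two axis translates with each other. To close the gap you need a second separation constant: for $1\neq h\in H$, the sets $N_R(\ell_f)$ and $N_R(h\,\ell_f)$ have uniformly bounded overlap. This follows from $H\cap E(f)=1$ (if $H$ met the maximal elementary subgroup $E(f)$ nontrivially, torsion-freeness would put $f^{\pm\infty}$ in $\Lambda H$, contradicting Step 1) together with the almost malnormality of $E(f)$ in $G$; the power $N$ must then be taken large relative to this constant as well as to $D$.

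For comparison, the paper does not prove this theorem by a Cayley-graph argument at all: it cites Arzhantseva and Gitik for the statement, and its in-house proof is the boundary ping-pong of Theorem 6.8, specialized to the torsion-free case where $H\cap Z=1$. That argument replaces all of the metric bookkeeping above with dynamics on $\partial G$: proper discontinuity of $H$ on $\partial G\setminus\Lambda H$ yields an open set $U\supset\partial Z_+$ with $hU\cap U=\emptyset$ for $h\in H\setminus Z_+$, north--south dynamics of $Z_+$ squeezes the compact complement into $U$ after passing to a finite-index subgroup, and Gitik's algebraic ping-pong lemma finishes. Your route, once repaired as indicated, is essentially Arzhantseva's original proof and yields extra quantitative control (such as quasiconvexity of $\langle H,g^m\rangle$ for large $m$), whereas the boundary argument is shorter and extends directly to groups with torsion.
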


 See
Arzhantseva \cite[Theorem~1]{Arzhantseva2001}  for a proof, and see Gitik  \cite[Corollary~4]{Gitik99} for a similar statement with additional hypotheses. See also \refthm{FreeProductAlternate}, which gives a slight generalization using ping-pong on $\bdy G$.
Note that Arzhantseva also proves the stronger result that
 $\langle H, g^m \rangle$ is quasiconvex in $G$ for sufficiently large $m$.

\begin{proof}[Proof of \refthm{GrowthTorsionFree}]
Assume that $H$ is non-trivial, as otherwise the statement of the theorem is immediate since $G$ has exponential growth. By
\refthm{FreeProduct}, we may choose an element $g \neq 1$ such that $K = \langle H, g \rangle \cong H \ast \langle g \rangle$.

By \refthm{Automatic}, let $L_G$ be a regular language of geodesics in $\Upsilon(G,S)$,  mapping bijectively to $G$. By \refthm{QuasiconvexRational}, the sub-language $L_H$ consisting of geodesics words mapping to points of $H$ is itself regular.

Let $\Gamma_H$ be a pruned finite state automaton that accepts $L_H$. Let $\rho_H$ be the Perron--Frobenius eigenvalue of the adjacency matrix of $\Gamma_H$.
By \refcor{CannonGrowth}, we have
\begin{equation}\label{Eqn:HGrowth}
\lambda_H =  \rho_H.
\end{equation}

Let $\sigma$ be an arc in $\Upsilon$ from $1$ to $g $. Let $x$ be the label of the first edge of $\sigma$. We introduce a new letter $x'$ into our alphabet, with the understanding that $x'$ maps to $x \in S$ when words are mapped to group elements. Let $\sigma'$ be a copy of $\sigma$, with $x$ replaced by $x'$.

Let $\Gamma_M$ be a finite state automaton built from $\Gamma_H$ as follows: for each accept state  $v_i \in \Gamma_H$, we attach an arc from $v_i$ to the start state $v_1$,
in the form of a directed, labeled copy of $\sigma'$. (The replacement $x \to x'$ ensures that $\Gamma_M$ is deterministic.)

We claim that $\Gamma_M$ is strongly connected. Indeed, every state of $\Gamma_H$ is reachable from the start state, leads to an accept state, and the accept state leads to the start via $\sigma'$. Thus every vertex (including the vertices on the copies of $\sigma'$) is part of a directed closed path through the start state $v_1$.

The language $L_M$ accepted by $\Gamma_M$ consists of words mapping to the monoid $M$
generated by $H$ and positive powers of $g$. Since $K$ is a free product, the composed map $L_M  \to M \to G$ is injective. Furthermore, a word of length $n$ in $L_M$ maps to a path of length $n$, hence the endpoint of this path lies in the ball of radius $n$ about $1 \in \Upsilon$. Thus, letting $f_M(n)$ be the number of words of length at most $n$ in $L_M$, as in \refeqn{fL}, we have
\begin{equation}\label{Eqn:KGrowth}
f_M(n) \leq f_{G,\Upsilon}(n).
\end{equation}

Therefore, we have
\[
\lambda_H =  \rho_H <  \rho_M = \lim_{n \to \infty} \sqrt[n]{ f_M(n)} \leq \lim_{n \to \infty} \sqrt[n]{ f_{G,\Upsilon}(n)} = \lambda_G.
\]
Here, the first equality holds by \refeqn{HGrowth}. The strict inequality holds by \refthm{Domination}, since $\Gamma_H$ is a proper subgraph of the strongly connected graph $\Gamma_M$. The next equality holds by \refthm{PathCount}. The non-strict inequality holds by \refeqn{KGrowth}, and the final equality is by \refeqn{LambdaDef}, the definition of $\lambda_G$. Note that the limit exists by \refcor{CannonGrowth}.
\end{proof}

\section{General actions by hyperbolic groups}\label{Sec:GeneralHyp}

Proving \refthm{GrowthGeneral} in the general case of group actions on graphs requires dealing with several complexities that did not arise in \refsec{TorsionFree}. The next two subsections give a way to circumvent these complexities.
First, \refthm{GroupoidLanguage} gives an analogue of \refthm{Automatic} and \refcor{CannonGrowth} for group actions on graphs that may have multiple vertex orbits and non-trivial point stabilizers. Next, \refthm{FreeProductAlternate} gives an analogue of \refthm{FreeProduct} that will work in the presence of torsion. With these results in hand, we can complete the proof of \refthm{GrowthGeneral}.

\subsection{A language for group actions}\label{Sec:ConstructAction}

Let $G$ be a group acting properly and cocompactly on a graph $\Upsilon$. The following constructions build a regular language $L_G$ adapted to this action. The results are summarized in \refthm{GroupoidLanguage}.

\begin{construction}[Free Action]\label{Const:FreeAction}
Let $G$ act on a graph $\Upsilon$. We may assume without loss of generality that $G$ acts without inversions. For,  if $G$ inverts an edge $e$, we add a second copy of $e$ without changing any distances in $\Upsilon$. We retain the name $\Upsilon$.

We construct a new graph  $\widehat \Upsilon$ with a free action by $G$. To that end, choose representatives of the orbits in $\Upsilon$ of vertices and edges.

Each vertex of $\widehat \Upsilon$ is a pair $(g,v)$, where $g \in G$ and $v$ is a representative vertex of $\Upsilon$.
An edge of $\widehat \Upsilon$ is likewise a pair $(g,e)$, where $e$ is a representative edge of $\Upsilon$.
The edge $(g,e)$ connects vertices $(gh, u )$ and $(gk, v)$ in $\widehat \Upsilon$ whenever   $e$ connects vertices $h u$  and $k v$ in $\Upsilon$.
Note that $G$ acts freely on $\widehat \Upsilon$ and that there is an equivariant surjection $\widehat \Upsilon \rightarrow \Upsilon $ induced by $(g,v) \mapsto gv$ and $(g,e) \mapsto ge$.

For each vertex $v$ of $\Upsilon$, let $\stab_G(v)$ denote its stabilizer.
We now form a new graph $\Upsilon^*$ as follows. For every representative vertex $v$, and every left coset $g \stab_G(v)$, we connect every pair of elements of  $(g \stab_G(v) ) (1,v)$ by an edge. This includes loop edges with both endpoints at $g(1,v)$.
We refer to these new edges as \emph{tiny edges}. Add duplicates of tiny edges corresponding to order $2$ elements of $\stab_G(v)$, ensuring that $G$ acts on $\Upsilon^*$ without inversions.
Again, there is an equivariant surjection $\Upsilon^*\rightarrow \Upsilon$, which collapses every tiny edge.
We call $\Upsilon^*$ the \emph{blowup} of $\Upsilon$. Note that $G$ acts freely on $\Upsilon^*$.

The point of adding tiny edges is that without them, $\widehat \Upsilon$ may not be connected; see \refex{Dihedral}. However, we have the following.

\begin{claim}\label{Claim:Connected}
If $\Upsilon$ is connected, then $\Upsilon^*$ is also connected.
\end{claim}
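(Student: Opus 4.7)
The plan is to show $\Upsilon^*$ is connected by lifting a path in $\Upsilon$ between the projections of two given vertices, using tiny edges to bridge disagreements between consecutive lifts that land in the same fiber. The argument splits into two lemmas and a concatenation step.

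The first lemma is that for every vertex $w$ of $\Upsilon$, its preimage in $\Upsilon^*$ is connected by tiny edges. Let $v$ be the representative in the orbit $Gw$ and pick $g_0 \in G$ with $g_0 v = w$. Then the preimage of $w$ in $\widehat \Upsilon$ is exactly $\{(g_0 h, v) : h \in \stab_G(v)\} = (g_0 \stab_G(v))(1,v)$, and by the definition of $\Upsilon^*$ all elements of this set are pairwise joined by tiny edges.

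The second lemma is that every edge $f$ of $\Upsilon$ admits a lift to an edge of $\widehat \Upsilon \subseteq \Upsilon^*$ whose endpoints project onto those of $f$. Writing $f = g e$ for a representative edge $e$ connecting $hu$ and $kv$ in $\Upsilon$ (for representative vertices $u,v$ and some $h,k \in G$), the edge $(g,e)$ of $\widehat \Upsilon$ has endpoints $(gh, u)$ and $(gk, v)$, which project to $ghu$ and $gkv$, the endpoints of $f$.

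To finish: given $(g_1, v_1), (g_2, v_2) \in \Upsilon^*$ with images $a, b \in \Upsilon$, I would take any path $a = w_0, f_1, w_1, \ldots, f_n, w_n = b$ in the connected graph $\Upsilon$, choose a lift $\tilde f_i$ of each $f_i$ by the second lemma, and invoke the first lemma at each fiber: at each intermediate $w_i$ the two lifts $\tilde f_i$ and $\tilde f_{i+1}$ each have an endpoint projecting to $w_i$, and these endpoints lie in the preimage of $w_i$ and are therefore joined through tiny edges; similarly $(g_1, v_1)$ is joined to an endpoint of $\tilde f_1$ within the preimage of $w_0 = a$, and $(g_2, v_2)$ to an endpoint of $\tilde f_n$ within the preimage of $w_n = b$. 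Concatenation yields the desired path in $\Upsilon^*$. I anticipate no real obstacle: the tiny edges were introduced precisely to enable this lifting argument, as the remark immediately preceding the claim essentially announces; the only care required is the bookkeeping verifying that each freshly chosen edge-lift genuinely sits over the intended pair of consecutive fibers, which is immediate from the construction of $\widehat \Upsilon$.
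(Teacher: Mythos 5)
Your proof is correct and follows the same approach as the paper's: lift the path in $\Upsilon$ edge by edge to $\widehat\Upsilon$, then bridge the mismatches at each intermediate fiber with tiny edges, which by construction form a complete graph on each coset $(g\stab_G(v))(1,v)$. The paper's version is terser but identical in substance; your explicit verification that the fiber over a vertex is exactly one such coset is the only detail it leaves implicit.
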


\begin{proof} A path $e_1 \cdots e_n$ in $\Upsilon$ lifts to a sequence $\hat{e}_1, \ldots, \hat{e}_n$ of edges in $\widehat{\Upsilon}$. Letting $v_i$ denote the vertex between  $e_i$ and $e_{i+1}$, the terminal vertex of
$\hat{e}_i$ lies in the same $\stab_G(v_i)$ orbit as the initial vertex of $\hat{e}_{i+1}$. We may thus join them by tiny edges to create a path in $\Upsilon^*$.
\end{proof}
\end{construction}

\begin{example}\label{Ex:Dihedral} Let $G = \ZZ_2 * \ZZ_2 = \langle a,b \mid a^2, b^2\rangle$. Let $\Upsilon$ be the Bass--Serre tree of the free product. That is: $\Upsilon$ is a copy of $\RR$, with vertices at $\ZZ$, on which $a$  acts by reflection about $0$ and $b$ acts by reflection about $1$. There are two $G$--orbits of vertices (namely, even and odd integers), and every vertex is stabilized by a conjugate of $\langle a \rangle$ or $\langle b \rangle$. Every edge is in the $G$--orbit of $e = [0,1]$. See Figure~\ref{Fig:Blowup}.

Given this setup, the $0$--skeleton of $\widehat \Upsilon$ is  $\widehat \Upsilon^{(0)} = \ZZ \times \{0,1\}$. Then $a$ and $b$ act on $\widehat \Upsilon^{(0)}$ by reflecting each copy of $\ZZ$ (about $0$ and $1$, respectively) and then interchanging the two copies. The combined effect appears as a rotation in Figure~\ref{Fig:Blowup}. Thus, for every integer $n$, $\widehat \Upsilon$ has an edge of the form $((ba)^n, e)$ with vertices at $(2n,0)$ and $(2n\!+\!1,0)$. Similarly, $\widehat \Upsilon$ has an edge of the form $(a(ba)^n, e)$ with vertices at $(-2n,1)$ and $(-2n\!-\!1,1)$. In particular, $\widehat \Upsilon$ has infinitely many connected components. 

\begin{figure}

\begin{overpic}{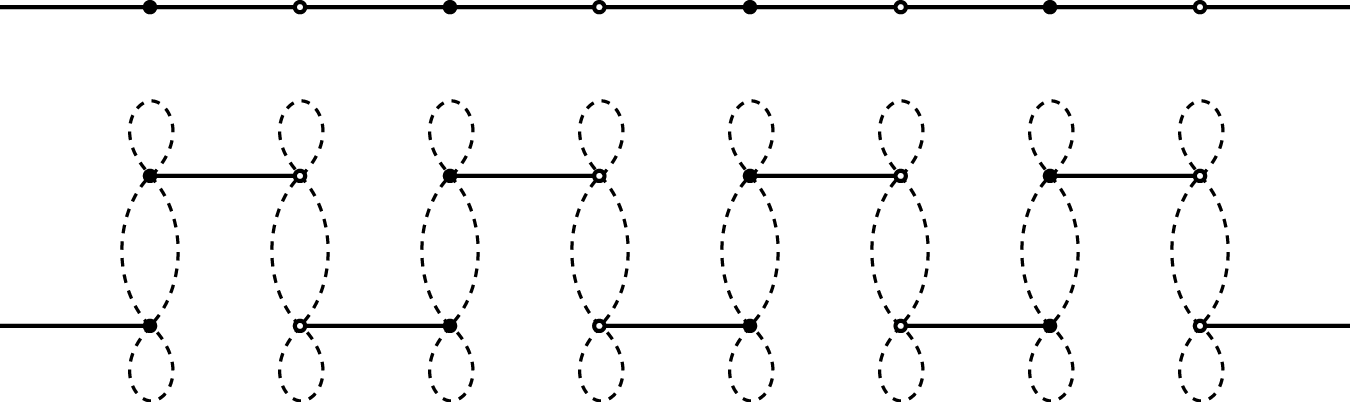}
\put(1,25){$\Upsilon$}
\put(1,7){$\Upsilon^*$}
\put(42.5,25){$\langle a \rangle$}
\put(53.5,25){$\langle b \rangle$}
\end{overpic}

\caption{The graphs of \refex{Dihedral}. Top: the Bass--Serre tree $\Upsilon$, where $\langle a \rangle$ and $\langle b \rangle$ stabilize vertices as shown. Bottom: the blowup $\Upsilon^*$, with tiny edges shown dashed. Deleting the tiny edges gives $\widehat \Upsilon$. Note that $\langle a \rangle$ and $\langle b \rangle$ act on $\Upsilon^*$ by rotation, without fixed points.}
\label{Fig:Blowup}
\end{figure}


To form the connected graph $\Upsilon^*$, we add the following tiny edges: one loop edge at every vertex of $\widehat \Upsilon$, as well as \emph{two} tiny edges connecting $(v,0)$ to $(v,1)$ for every $v \in \ZZ$. The two tiny edges from $(v,0)$ to $(v,1)$ are permuted by $\stab_G(v)$.

\end{example}

\begin{construction}[Transitive Action] \label{Const:TransitiveAction}
Let $G$ act cocompactly (and without inversions) on a graph $\Upsilon$. We will create a new group $G^+$ acting transitively on the vertices of a graph $\Upsilon^+$.

Let $\Upsilon^*$ be the blowup of $\Upsilon$, with the resulting free $G$--action,
as described in Construction~\ref{Const:FreeAction}.
We attach $2$--cells as follows. First,
choose a single representative from each $G$--orbit of based cycles, and attach a $2$--cell along it. Then extend equivariantly,  to obtain a simply connected $2$--complex with a free $G$ action.
We retain the name $\Upsilon^*$.
Let $D$ be the quotient of $G\backslash \Upsilon^*$ obtained by identifying all $0$--cells.
Then $ \pi_1 D  = G^+ \cong G * F_r$, where   $F_r$ is a free group whose generators  are in $1$--$1$ correspondence with edges in a spanning tree for $G\backslash \Upsilon^*$.

Consider the universal cover $\widetilde D$, which is a tree of copies of $\Upsilon^*$. We let $\Upsilon^+$ be the $1$--skeleton of $\widetilde{D}$. The deck group $G^+$ acts transitively on the vertices of $\Upsilon^+$.
\end{construction}

\begin{construction}[Regular Language]\label{Const:GroupoidLanguage}
We continue with the notation of Construction~\ref{Const:TransitiveAction}.
 Choose a generating set $S^+$ for  $G^+$ by considering its action on $\Upsilon^+$. Each generator   corresponds to a closed path in the $1$--skeleton of $D$ of the form $yey'$, where $e$ is a non-tiny edge, and $y, y'$  are tiny edges.
This includes the case where $y$ or $y'$ is a loop edge in $\Upsilon^*$, representing a trivial element of $G$.

Assuming that $\Upsilon$ has at least one edge, each tiny edge is homotopic to the concatenation of two generator paths. Thus the proof of \refclaim{Connected} shows that
$S^+$ generates $G^+$.
The set $S^+$ is finite whenever the action of $G$ on $\Upsilon$ is proper and cocompact.
Note that $S^+$ is symmetric by definition.

Since  $G^+ \cong G * F_r$ is hyperbolic, \refthm{Automatic} provides a geodesic regular language $L^+$ that maps bijectively to $G^+$. Let $L_G \subset L^+$ be the sublanguage mapping bijectively to $G \subset G^+$.
Since $G$ is quasiconvex in $G^+ \cong G * F_r$, the sublanguage $L_G$ is regular by \refthm{QuasiconvexRational}.


Each word in $L^+$ is a path in $\Upsilon^+$ starting at the canonical basepoint $b$.
Since $\Upsilon^+$ is a tree of copies of $\Upsilon^*$, the words of $L_G \subset L^+$ correspond to paths that stay in one copy of $\Upsilon^*$. Consider the projection  $\Upsilon^* \to \Upsilon$, which collapses all tiny edges.
\end{construction}

Let $L_b = L_{\stab_G(b)}$ be the finite set of words in $L_G$ mapping to $b \in \Upsilon$.

\begin{claim}\label{Claim:SameLength}
The projection  $\Upsilon^* \to \Upsilon$ maps words of length $n$ in $L_G - L_b$ to length $n$ geodesics in $\Upsilon$.
\end{claim}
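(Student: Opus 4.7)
The plan is to verify two assertions: that $\Upsilon^* \to \Upsilon$ sends the $\Upsilon^+$-path associated to a length-$n$ word to a path of length $n$ in $\Upsilon$, and that this image path is a geodesic whenever $w \in L_G - L_b$.

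I would first dispatch the length assertion. A word $w = s_1 \cdots s_n \in L_G$ traces in $\Upsilon^+$ the concatenation of the $n$ generator paths $y_i e_i y_i'$, and thus contains exactly $n$ non-tiny edges. Since the projection $\Upsilon^* \to \Upsilon$ collapses all tiny edges, the projected image is a path of $n$ consecutive edges in $\Upsilon$ from $b$ to $g \cdot b$, where $g \in G$ is the element represented by $w$. In particular, $d_\Upsilon(b, g \cdot b) \leq n$.

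I would then prove the geodesic assertion by contradiction. Assuming $w \in L_G - L_b$, so $g \cdot b \neq b$, suppose $d_\Upsilon(b, g \cdot b) = m < n$ and choose a geodesic $\bar e_1 \cdots \bar e_m$ in $\Upsilon$ from $b$ to $g \cdot b$ with $m \geq 1$. I would lift this geodesic to $\Upsilon^*$: each $\bar e_i$ admits a non-tiny lift in $\widehat \Upsilon$, and since the tiny edges form a complete graph on each fiber of $\Upsilon^* \to \Upsilon$, a single tiny edge suffices at each of the $m+1$ possible seams (to reach the start of the first lift from $b$, to realign between consecutive non-tiny lifts, and to reach $g \cdot b$ from the end of the last lift). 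Next I would decompose the lifted path into $m$ blocks of the form $(y_i, e_i, y_i')$, padding an empty seam with a tiny loop whenever necessary --- this is permitted by Construction~\ref{Const:GroupoidLanguage}, which explicitly allows $y_i$ or $y_i'$ to be a loop edge. Each block is a generator in $S^+$, so the decomposition produces a word of length $m$ in $S^+$ representing $g$, giving $|g|_{S^+} \leq m$. But $L^+$ is a geodesic regular language for $G^+$ with respect to $S^+$ by \refthm{Automatic}, forcing $|g|_{S^+} = n$, which contradicts $m < n$.

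The main obstacle is the lift-and-decompose step. Two geometric points must be checked: that a single tiny edge always suffices at each seam (ensured by the complete-graph structure of tiny edges on each fiber), and that padding with loops yields valid generators of $S^+$ (ensured by the explicit allowance in Construction~\ref{Const:GroupoidLanguage}). The hypothesis $w \in L_G - L_b$, equivalent to $m \geq 1$, is essential here: if $g \in \stab_G(b)$, any tiny displacement needed to go from $b$ to $g \cdot b$ within a single fiber cannot be absorbed into a block $(y,e,y')$ with $e$ non-tiny, and the word-length contradiction breaks down --- as it must, since such $w$ project to nontrivial closed loops rather than geodesics.
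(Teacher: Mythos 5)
Your proof is correct and follows essentially the same route as the paper's: both establish that a length-$n$ word projects to a length-$n$ path because each generator of $S^+$ contains exactly one non-tiny edge, and both then lift a putative shorter geodesic back to $\Upsilon^*$, pad the seams with tiny edges (using loops where needed) to assemble a word in $S^+$ of the same length, and invoke the geodesic property of $L^+$ to rule out the shorter word. The only difference is cosmetic: you phrase the second half as a contradiction, while the paper states it as the converse surjectivity of geodesics onto length-$n$ words.
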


\begin{proof}
Since every generator in $S^+$ contains exactly one non-tiny edge, a word of length $n$ always determines a path of length $n$. What needs to be shown is the converse: a geodesic of length $n \geq 1$ in $\Upsilon$ is always hit by a word of length $n$ in $L_G$.

A geodesic  in $\Upsilon$ from $b$ to $gb \neq b$ is a path $e_1 e_2\cdots e_n$. By \refclaim{Connected}, the geodesic in $\Upsilon$
``lifts'' to a path of the form $y_0 e_1 y_1 e_2 y_2 \cdots e_n y_n$ in $\Upsilon^*$.
This path determines a word  $(y_0e_1y_1)(y'_2 e_2y_2)\cdots (y'_n e_n y_n)$ in $S^+$ of the same length, where every $y'_i$ is a loop edge. By the previous paragraph, a word of length less than $n$ is not possible.
Hence the geodesic language $L_G$ contains a word of length $n$ mapping to this path.
\end{proof}

The result of these constructions is encapsulated in the following theorem.

\begin{theorem}\label{Thm:GroupoidLanguage}
Let $G$ be a hyperbolic group acting properly and cocompactly on a graph $\Upsilon$. Fix a basepoint $b \in \Upsilon$. Then there is a regular language $ L_{G}$ with the following properties.
\begin{enumerate}
\item\label{Itm:FiberSize} $L_G \to Gb$ is a surjection with fibers of cardinality exactly $|\stab_G(b)|$.
\item\label{Itm:Lb} The words in $L_b$, i.e.\ the preimage of $b$, have length $1$ or $0$, and correspond to paths of length $0$.
\item\label{Itm:SameLength}  Every word of length $n$ in $L_G - L_b$ corresponds to a length $n$ geodesic in $\Upsilon$, starting at $b$. \item\label{Itm:Sublanguage} For every quasiconvex subgroup $H \subset G$, the sublanguage $L_H \subset L_G$ of words mapping to $Hb$ is regular.
\item\label{Itm:SublangGrowth}
Let $\rho_H$ be the Perron--Frobenius eigenvalue of the transition matrix for any pruned automaton accepting $L_H$.
Then growth rate $\lambda_H(\Upsilon)$ satisfies
\begin{equation}\label{Eqn:GroupoidGrowth}
\lambda_H(\Upsilon) =
 \lim_{n \to \infty} \sqrt[n]{  f_{H,\Upsilon}(n)} =
 \lim_{n \to \infty} \sqrt[n]{  f_{L_H}(n) } =  \rho_H.
\end{equation}

\end{enumerate}
\end{theorem}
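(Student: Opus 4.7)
The plan is to verify the five items in sequence, treating the theorem as a consolidation of the constructions already assembled in this section. The main work has been done in Constructions~\ref{Const:FreeAction}--\ref{Const:GroupoidLanguage} and \refclaim{SameLength}; what remains is to carefully unpack the bijections and length relations.

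For items \refitm{FiberSize}--\refitm{Lb}: \refthm{Automatic} gives a bijection $L^+ \to G^+$, so $L_G$, being defined as the preimage of $G \subset G^+$, maps bijectively onto $G$; composing with the orbit map $G \to Gb$ (whose fibers have cardinality $|\stab_G(b)|$) yields \refitm{FiberSize}. Item \refitm{SameLength} is exactly \refclaim{SameLength}. For item \refitm{Lb}, the fiber $L_b$ corresponds bijectively to $\stab_G(b)$, with the empty word representing the identity; unpacking Construction~\ref{Const:GroupoidLanguage} shows that each nontrivial $s \in \stab_G(b)$ is realized by a single generator in $S^+$, so its $L_G$-representative has length $1$, and its projection to $\Upsilon$ is a closed loop whose endpoint coincides with $b$.

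For items \refitm{Sublanguage}--\refitm{SublangGrowth}: observe that $G$ is a free factor of $G^+ \cong G * F_r$, hence is quasiconvex in $G^+$ with respect to $S^+$; since quasiconvexity is transitive, a quasiconvex $H \subset G$ is also quasiconvex in $G^+$, and \refthm{QuasiconvexRational} applied to the automatic structure $L^+$ produces the regular sublanguage $L_H$. Combining \refitm{FiberSize}, \refitm{Lb}, and \refitm{SameLength} shows that the bijection $L_H \to H$ sends any word of length $n \geq 1$ outside $L_b$ to an element $h \in H$ with $d_\Upsilon(b, hb) = n$, while $|L_b| \leq |\stab_G(b)|$ is a finite constant. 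Hence $|f_{L_H}(n) - f_{H,\Upsilon}(n)| \leq |\stab_G(b)|$ for all $n$, so the two counting sequences share the same exponential rate; \refthm{PathCount} applied to a pruned automaton for $L_H$ identifies this common rate with $\rho_H$ (when $H$ is infinite the pruned automaton contains a cycle, forcing $\rho_H \geq 1$ and absorbing the $\max\{\rho_H,1\}$ of \refthm{PathCount}).

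The main obstacle is item \refitm{Lb}. Since generators of $S^+$ correspond to paths of the form $yey'$ with $e$ non-tiny, a single tiny edge representing $s \in \stab_G(b) - \{1\}$ is a priori only a product of generators rather than a generator itself. Confirming that $s$ nonetheless realizes $S^+$-length exactly $1$ requires careful tracking of how the identification of $0$-cells in $D$ and the universal cover $\widetilde{D} \to D$ interact with the tiny-edge structure of $\Upsilon^*$, ensuring that each stabilizer element of $b$ appears as a single-edge hop emanating from the basepoint in $\Upsilon^+$. Once this bookkeeping is in place, the remaining items are routine applications of classical automatic-structure machinery.
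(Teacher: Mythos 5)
Your proposal is correct and takes essentially the same route as the paper: both proofs read items \refitm{FiberSize}--\refitm{SameLength} directly off Constructions~\ref{Const:FreeAction}--\ref{Const:GroupoidLanguage} and \refclaim{SameLength}, deduce item \refitm{Sublanguage} from quasiconvexity of $H$ in $G^+ \cong G * F_r$ via \refthm{QuasiconvexRational}, and obtain \refeqn{GroupoidGrowth} by comparing $f_{L_H}$ with $f_{H,\Upsilon}$ up to a harmless constant and then invoking \refthm{PathCount}. The ``obstacle'' you isolate in item \refitm{Lb} --- that each nontrivial element of $\stab_G(b)$ has $S^+$-length exactly $1$ rather than $2$ --- is precisely the point the paper also asserts without further argument, and your bookkeeping for item \refitm{SublangGrowth} (a bounded additive discrepancy between $f_{L_H}$ and $f_{H,\Upsilon}$, plus the remark that $\rho_H \geq 1$ when $H$ is infinite) is, if anything, slightly more careful than the paper's claim of an exact multiplicative constant $|\stab_G(b)|$.
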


\begin{proof}
Recall, from Construction~\ref{Const:GroupoidLanguage}, that $L_G$ is a regular sublanguage of the language $L^+$.
Since $L_G$ maps bijectively to $G$, and the surjection $G \to Gb$ has fibers of cardinality $|\stab_G(b)|$, conclusion \refitm{FiberSize} follows.

Conclusion \refitm{Lb} recalls the definition of $L_b$, combined with the fact that every non-trivial word of $L_b$ is expressible by a single letter in $S^+$.
Conclusion \refitm{SameLength} is a restatement of \refclaim{SameLength}.

Every quasiconvex subgroup $H \subset G$ is also quasiconvex in $G^+ \cong G * F_r$. Thus, by \refthm{QuasiconvexRational}, the sublanguage $L_H \subset L^+$ of words mapping to $H$ is also regular.

Equation~\refeqn{GroupoidGrowth} should be considered right to left. The right-most equality is by \refthm{PathCount}, and implies that the limit exists. The middle equality  follows by \eqref{Itm:FiberSize}, because $ f_{L_H}(n) = |\stab_G(b)| \cdot  f_{H,\Upsilon}(n)$, and  the constant factor disappears in the limit. The left-most equality is by the definition of $\lambda_H(\Upsilon)$.
\end{proof}

\subsection{Ping-pong with torsion}\label{Sec:PingPong}

Recall that our proof of \refthm{GrowthTorsionFree} relies on \refthm{FreeProduct}, which produces a free product $H * \langle g \rangle \subset G$.
Such a product may fail to exist when $H$ has torsion.
For instance, let  $G=G'\times \integers_p$,
and let $H=H'\times \integers_p$ for some quasiconvex $H'\subset G'$.
Then for any $g\neq 1$,
the subgroup $\langle H,g\rangle$ will have a nontrivial center, and thus does not split as a (nontrivial) free product.

Although the exact statement of \refthm{FreeProduct} does not hold in general, we have the following generalization  to hyperbolic groups with torsion.

\begin{theorem}\label{Thm:FreeProductAlternate}
Let $G$ be a hyperbolic group. Let $H \subset G$ be a quasiconvex subgroup, and let $Z_+ \subset G$ be a maximal elementary subgroup that is not commensurable with a subgroup of $H$.
Then there exists a finite index subgroup $Z \subset Z_+$
such that $\langle H, Z\rangle$ is isomorphic to the amalgamated free product
$H*_{(H\cap Z)} Z$, where $H \cap Z$ is finite.
\end{theorem}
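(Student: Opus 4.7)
The plan is to verify the hypotheses of a ping-pong lemma for amalgamated free products on the Gromov boundary $\partial G$, using a sufficiently high power of a hyperbolic element of $Z_+$ as the new generator. First, $F := H \cap Z_+$ is finite: otherwise $F$ would be an infinite subgroup of the virtually cyclic group $Z_+$, hence of finite index in $Z_+$, making $Z_+$ commensurable with $F \subset H$, contrary to the hypothesis. The same hypothesis forces $Z_+$ to be infinite, so I fix a hyperbolic $g \in Z_+$ with distinct fixed points $g^+, g^- \in \partial G$, and I claim $\{g^+, g^-\} \cap \Lambda H = \emptyset$. Indeed, if $g^+ \in \Lambda H$, conicality of this limit point (a standard property of quasiconvex subgroups of hyperbolic groups) produces a sequence $h_n \in H$ with $h_n b$ boundedly close to $g^n b$; the elements $g^{-n} h_n$ then lie in a bounded, hence finite, subset of $G$, so pigeonhole yields $n > m$ with $g^{-n}h_n = g^{-m}h_m$, giving $g^{n-m} = h_n h_m^{-1} \in H \cap Z_+ = F$, which contradicts the finiteness of $F$.

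Next I set up the ping-pong. The key preliminary is that for every $h \in H \setminus F$ one has $h \cdot \{g^+, g^-\} \cap \{g^+, g^-\} = \emptyset$: otherwise $hgh^{-1}$ and $g$ would be hyperbolic elements of $G$ sharing a single fixed point at infinity, and a standard rigidity statement forces them to share both fixed points, placing $h$ in the stabilizer $Z_+$ of $\{g^+, g^-\}$, hence in $F$. Combining this disjointness with the convergence property of the $G$-action on $\partial G$---any unbounded sequence in $H$ uniformly contracts $\partial G$ onto a point of $\Lambda H$ outside a single bad point---one can choose $F$-invariant open neighborhoods $U \supset \{g^+, g^-\}$ and $V \supset \Lambda H$ in $\partial G$ with $U \cap V = \emptyset$ and
\[
h(\partial G \setminus V) \subset V \quad \text{for every } h \in H \setminus F.
\]
This is the most delicate part of the argument, and I expect it to be the main obstacle. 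Unlike the torsion-free \refthm{FreeProduct}, individual elements of $H$ need not contract $\partial G$ toward $\Lambda H$---only sequences going to infinity do---so the finitely many short elements of $H \setminus F$ must be treated by hand, either by further shrinking $U$ (possible thanks to the pairwise disjointness above) or by enlarging $V$ to absorb their residual images (while keeping $V$ disjoint from the possibly smaller $U$).

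To finish, choose $N$ large enough that $g^{\pm N}$ maps $\partial G \setminus U$ into $U$, and set $Z := \langle F, g^N \rangle$. Then $Z$ has finite index in $Z_+$, and $H \cap Z = H \cap Z_+ \cap Z = F \cap Z = F$. For each $z = g^{Nk} f \in Z \setminus F$, $F$-invariance of $U$ gives $f(\partial G \setminus U) = \partial G \setminus U$, and then the contraction of $g^{Nk}$ (for $k \neq 0$) yields $z(\partial G \setminus U) \subset U$. These two inclusions are precisely the hypotheses of the ping-pong lemma for an amalgamated free product with common subgroup $F$: starting a reduced word from a point $p$ in the nonempty open set $\partial G \setminus (U \cup V)$---nonempty because the non-commensurability hypothesis forces $[G:H] = \infty$ and hence $\Lambda H$ to be a proper subset of $\partial G$---each letter alternately carries $p$ into $V$ or into $U$, so the final image lies in $U \cup V$ and differs from $p$. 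Hence every non-trivial element of $H *_F Z$ acts non-trivially on $\partial G$, yielding $\langle H, Z \rangle \cong H *_F Z$, as required.
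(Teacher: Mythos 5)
Your argument follows essentially the same route as the paper: ping-pong on $\Omega=\partial G$ between a neighborhood of $\partial Z_+=\{g^+,g^-\}$ and its complement, using proper discontinuity of $H$ on $\partial G\setminus\Lambda H$ for the $H$-side and north--south dynamics of $Z_+$ for the $Z$-side. The step you flag as the main obstacle does go through, by exactly the mechanism you name (shrinking around $\{g^\pm\}$, which is the same as ``enlarging $V$''): take a decreasing sequence of compact neighborhoods $K_n$ of $\{g^+,g^-\}$ with $\bigcap_n K_n=\{g^+,g^-\}$ and $K_n\cap\Lambda H=\emptyset$; each $S_n=\{h\in H: hK_n\cap K_n\neq\emptyset\}$ is finite by proper discontinuity, the $S_n$ are nested, and a compactness argument shows $\bigcap_n S_n=\stab_H(\{g^\pm\})=F$ via your pairwise-disjointness preliminary, so $S_n=F$ for large $n$. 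Then set $V=\partial G\setminus K_n$, take $U$ to be a strictly smaller $F$-invariant open neighborhood of $\{g^\pm\}$ inside the interior of $K_n$ (intersect over the $F$-translates), and pick the ping-pong point $p\in K_n\setminus U$. This is precisely the paper's construction of its set $U$, there phrased as $hU\cap U=\emptyset$ for $h\in H\setminus Z_+$ with $\Omega_Z$ the complement, and then fed into Gitik's ping-pong lemma rather than run by hand.

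The one genuine loose end is the normal form $z=g^{Nk}f$ for elements of $Z=\langle F,g^N\rangle$: this requires $F$ to normalize $\langle g^N\rangle$, which is not automatic. Repair it by first replacing $N$ with a multiple for which $\langle g^N\rangle$ is normal in $Z_+$ (the normal core of the finite-index subgroup $\langle g\rangle$ in $Z_+$ is $\langle g^{N_0}\rangle$ for some $N_0$), so that $Z=\langle g^N\rangle F$ as a set; and use the uniform form of north--south dynamics, namely $g^{n}(\partial G\setminus U)\subset U$ for all $|n|\geq N$ with $U$ split into its two components around $g^+$ and $g^-$, so that the contraction holds for every $k\neq 0$ and not just $k=\pm 1$. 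The paper sidesteps this bookkeeping by instead choosing $Z$ via separability of the finite subgroup $F$ in the virtually cyclic group $Z_+$, excluding the finitely many elements of $Z_+$ that fail to contract $\Omega_Z$ into $U$; either route works.
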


A version of \refthm{FreeProductAlternate}, with the additional hypothesis that $H \cap Z_+$ is separable, is due to Mart{\'{\i}}nez-Pedroza and Sisto \cite[Corollary 4]{martinez-pedroza-sisto}.
To complete the analogy with \refthm{FreeProduct}, they also show  that $\langle H, Z \rangle$ is quasiconvex in $G$ whenever $[Z_+ : Z]$ is sufficiently large.

\begin{remark}\label{Rem:profusion}
When $[G:H] = \infty$, subgroups $Z_+$ as in \refthm{FreeProductAlternate} are abundant.
Indeed, $\boundary G$ is the closure of attracting fixed points of loxodromic elements,
and so we can choose an element $g$ with $\lim_{n\rightarrow \pm \infty} g^n = p_{\pm \infty} \notin \boundary H$.
Let $Z_+ = \stab_G ( \{p_+, p_- \} )$. Then $\boundary Z_+ =  \{p_+, p_- \}$, hence $Z_+$ is maximal elementary.
\end{remark}

We will give a short alternate proof of \refthm{FreeProductAlternate} using the following version of the ping-pong lemma. See Gitik
 \cite{Gitik99} for a particularly simple proof.

\begin{lemma}[Ping-pong]\label{Lem:ping-pong}
Let $H,Z$ be subgroups of a group $G$ acting on a set $\Omega$, and suppose $[Z:(H\cap Z)]>2$.
Let $\Omega_H$ and $\Omega_Z$ be disjoint, nonempty subsets of $\Omega$ such that
$(H-Z)\Omega_H\subset \Omega_Z$ and $(Z-H)\Omega_Z\subset \Omega_H$.
Then $\langle H,Z\rangle \cong H*_{H\cap Z} Z$.
\end{lemma}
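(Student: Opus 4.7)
The plan is to show that the natural homomorphism $\varphi \colon H *_{K} Z \to \langle H, Z \rangle \subset G$, where $K = H \cap Z$, is an isomorphism. Surjectivity is immediate from the definition of $\langle H, Z \rangle$, so the entire content is injectivity. For this, I invoke the normal form theorem for amalgamated free products: every nontrivial element of $H *_K Z$ can be written either as $1 \neq k \in K$, or as $k \cdot g_1 g_2 \cdots g_n$ with $k \in K$, $n \geq 1$, and the $g_i$ alternately drawn from $H \setminus K$ and $Z \setminus K$ (a \emph{reduced word}). The case $n = 0$ is immediate since $K$ injects into $G$. In the case $n \geq 1$, absorbing $k$ into $g_1$ reduces matters to showing that any reduced word $w = g_1 \cdots g_n$ of length $n \geq 1$ acts nontrivially on $\Omega$. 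Note that the ping-pong hypotheses can be rewritten as $(H \setminus K) \Omega_H \subset \Omega_Z$ and $(Z \setminus K) \Omega_Z \subset \Omega_H$, since $H \setminus Z = H \setminus K$ and $Z \setminus H = Z \setminus K$.

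The ping-pong argument is a case analysis on $n$ and on which factors contain $g_1$ and $g_n$. The case $n = 1$ is immediate. The main good case is $n \geq 2$ with $g_1$ and $g_n$ in the \emph{same} factor (which forces $n$ odd). Say $g_1, g_n \in H \setminus K$. Choose $p \in \Omega_H$ and apply $g_n$, then $g_{n-1}$, and so on: each successive letter alternates between $H \setminus K$ and $Z \setminus K$, and the hypotheses ensure that the image point successively toggles between $\Omega_Z$ and $\Omega_H$. Since $n$ is odd, $w(p) \in \Omega_Z$. As $\Omega_H \cap \Omega_Z = \emptyset$ and both sets are nonempty, $w(p) \neq p$, so $w \neq 1$ in $G$. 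The same argument applies if $g_1, g_n \in Z \setminus K$, starting from $p \in \Omega_Z$.

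The hard case is the remaining one: $n \geq 2$ with $g_1$ and $g_n$ in \emph{different} factors (so $n$ is even). Here direct ping-pong is inconclusive, because tracing a point through $w$ returns it to the set it started in, which is consistent with $w = 1$. The hypothesis $[Z : K] > 2$ is used precisely to sidestep this. Assume (after replacing $w$ by $w^{-1}$ if necessary) that $g_1 \in H \setminus K$ and $g_n \in Z \setminus K$. The cosets $K$ and $g_n^{-1} K$ of $K$ in $Z$ are distinct, because $g_n \notin K$; since there are at least three cosets, one can choose $z \in Z$ outside both, so that $z \in Z \setminus K$ and $g_n z \in Z \setminus K$. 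Suppose for contradiction that $w = 1$. Then $\tilde w := z^{-1} w z = 1$ as well, and $\tilde w$ may be written as $z^{-1} g_1 g_2 \cdots g_{n-1} (g_n z)$, which is a reduced word of odd length $n + 1$ with both first and last letter in $Z \setminus K$. The previous same-factor case applied to $\tilde w$ then yields $\tilde w \neq 1$ in $G$, a contradiction. This establishes injectivity of $\varphi$ and completes the proof.
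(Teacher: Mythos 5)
Your proof is correct and complete: the ping-pong on reduced alternating words from $H\setminus K$ and $Z\setminus K$ handles odd-length (same-endpoint-factor) words directly, and the hypothesis $[Z:(H\cap Z)]>2$ is used exactly where it is needed, namely to pick $z\in Z$ outside the two distinct cosets $K$ and $g_n^{-1}K$ so that conjugation by $z$ turns an even-length word into an odd-length one, with the reduction to reduced words, the absorption of the amalgamated factor into the first letter, and the passage to $w^{-1}$ all handled correctly. Note that the paper itself does not prove this lemma but defers to Gitik for ``a particularly simple proof,'' and your argument is essentially that standard one, so there is no divergence to report.
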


\begin{proof}[Proof of \refthm{FreeProductAlternate}]
Let $\Omega = \bdy G$. We will construct
 an open neighborhood $U$ of $\boundary Z_+$ such that
$hU\cap U=\emptyset$ for each $h\in H-Z_+$.
Since $H$ acts properly discontinuously on $\Omega -\boundary H$, there is  an open neighborhood $V$ of $\boundary Z_+$ such that $\{h_1,\ldots, h_m\}$ is the finite subset of $H$
 with $h_iV\cap V\neq \emptyset$. By making $V$ smaller if necessary, we ensure that this finite set coincides with
$\stab_H(\boundary Z_+)$.
 Now, define $U = \cap_{i=1}^m h_i V$, and observe that $U$ is  $\stab_H(\boundary Z_+)$--invariant.

Let  $\Omega_H=U$. Let $\Omega_Z$ be the compact set $\Omega -U$.
Since $Z_+$ is maximal, we have $Z_+ = \stab_G(\bdy Z_+)$, hence  $(H-Z_+) \cap \stab_H(\boundary Z_+) = \emptyset$. Thus, for  $h\in H-Z_+$, we have
\[
h\Omega_H \: = \: hU \: \subset \: \boundary G-U \: = \: \Omega_Z.
\]
Recall that $Z_+$ acts on $\Omega = \bdy G$ with north--south dynamics.
Thus, all sufficiently long translators in $Z_+$ will squeeze the compact set $\Omega_Z$ into any open neighborhood about $\bdy Z_+$. By the separability of $(H\cap Z_+) \subset Z_+$, there is a finite index subgroup $Z\subset Z_+$ that contains $H\cap Z_+$ but excludes the finitely many elements that fail to map $\Omega_Z$ into $U$. In other words, for $z\in   Z-H$, we have
\[ z \Omega_Z \subset U = \Omega_H.
\]
Now, \reflem{ping-pong} completes the proof.
\end{proof}

\subsection{Exponential growth discrepancy}\label{Sec:FinalProof}

We can now restate and prove the main theorem of this paper. The proof follows the same outline as that of \refthm{GrowthTorsionFree}, while incorporating the extra structure developed in this section.

\begin{named}{\refthm{GrowthGeneral}}
Let $G$ be a non-elementary hyperbolic group acting properly and cocompactly on a graph $\Upsilon$.
Let $H$ be a quasiconvex subgroup of infinite index. Then 
\[
\lambda_H(\Upsilon) < \lambda_G (\Upsilon).
\]
\end{named}

\begin{proof}Assume that $H$ is infinite, as otherwise the statement of the theorem is immediate since $G$ has exponential growth. By  \refrem{profusion}, choose an infinite order element $g$ such that $\langle g\rangle$ is not commensurable with a subgroup of $H$. By \refthm{FreeProductAlternate}, there exists $m>0$ such that $K= \langle H,g^m\rangle$ splits as an amalgamated free product
 $H *_F Z$ over a finite group $F$, where $[Z:\langle g^m\rangle]<\infty$.

Fix a basepoint $b \in \Upsilon$. By \refthm{GroupoidLanguage}, there is a regular language $L_G$ with a sequence of maps
\begin{equation}\label{Eqn:Surjections}
L_G \xrightarrow{\: \alpha \: } G \xrightarrow{\: \beta \:} Gb,
\end{equation}
where $\alpha$ is a bijection and $\beta$ has fibers of constant cardinality $C = |\stab_G(b)|$.
\refthm{GroupoidLanguage} also guarantees that the sublanguage $L_H$ mapping to $H$ is regular.

Let $\sigma$ be an arc in $\Upsilon$ from $b$ to $g^m b$. Let $x$ be the label of the first edge of $\sigma$. We introduce a new letter $x'$ into our alphabet, with the understanding that $\alpha(x') = \alpha(x) \in G$. In other words, $x'$ represents the same group element as $x$. Let $\sigma'$ be a copy of $\sigma$, with $x$ replaced by $x'$. For later use, we assign a weight of $\frac{1}{|F|}$ to the initial edge of $\sigma'$. All other edges have weight $1$.

Let $\Gamma_H$ be a pruned finite state automaton that accepts $L_H$.
Let $\Gamma_M$ be a finite state automaton built from $\Gamma_H$ as follows: for each accept state  $v_i \in \Gamma_H$, we attach an arc from $v_i$ to the start state $v_1$,
in the form of a directed, labeled copy of $\sigma'$. As in the proof of \refthm{GrowthTorsionFree}, these arcs ensure that $\Gamma_M$ is strongly connected. As in \refeqn{Surjections}, we extend the map $\alpha$ to a map $L_M \to G$, which is no longer injective.
Let us examine its (failure of) injectivity.

%

The language accepted by $\Gamma_M$ consists of words mapping under $\alpha$ to the monoid $M \subset K$
generated by $H$ and $g^m$.
If $k=h_0g^mh_1g^m\cdots g^m h_r$ is an element of $M$ whose normal form in $K$ has $r$ appearances of $g^m$,
 then $k$ is hit by exactly $|F|^r$ elements of $L_M$, because $M \subset K \cong H *_F Z$. Since the weight of the path representing $g^m$ is $1/|F|$, each of these words in $L_M$ has weight $|F|^{-r}$, hence the total weight of $\alpha^{-1}(k) \subset L_M$ is $1$.
(Recall from \refsec{Perron} that the \emph{weight} of a word is the product of the weights of its letters.)

Since $\beta: G \to Gb$ has fibers of constant cardinality $C$, it follows from the above paragraph that for
 each $k\in M$, the total weight of of the words mapping to $kb$ is
 \[
 w(\alpha^{-1} \beta^{-1}(kb)) = C.
 \]

By \refthm{GroupoidLanguage}\refitm{SameLength}, a word of length $n$ in $L_M$ maps to a path of length $\leq n$, hence the endpoint of this path lies in the ball of radius $n$ about $b \in \Upsilon$. Therefore, letting $f_M(n)$ be the total weight of the words of length at most $n$ in $L_M$, we have
\begin{equation}\label{Eqn:KGrowthBis}
f_M(n) \leq C f_{G,\Upsilon}(n).
\end{equation}

Let $\rho_H$ and $\rho_M$ be the Perron--Frobenius eigenvalues of the adjacency matrices of $\Gamma_H$ and $\Gamma_M$, respectively.
Then
\[
\lambda_H(\Upsilon) =  \rho_H <  \rho_M = \lim_{n \to \infty} \sqrt[n]{ f_M(n)} \leq \lim_{n \to \infty} \sqrt[n]{C f_{G,\Upsilon}(n) } = \lambda_G (\Upsilon).
\]
Here, the first equality holds by \refeqn{GroupoidGrowth}. The strict inequality holds by \refthm{Domination}, since $\Gamma_H$ is a proper subgraph of the strongly connected graph $\Gamma_M$. The next equality holds by \refthm{PathCount}. The non-strict inequality holds by \refeqn{KGrowthBis}, and the final equality is by the definition \refeqn{LambdaDef} of $\lambda_G$.
\end{proof}

\section{Interlude: Alternate approaches to \refthm{GrowthGeneral}}\label{Sec:Alternate}

After the first version of this paper was distributed, several mathematicians informed us that \refthm{GrowthGeneral} can also be derived from various modern tools. 
 In this section, we survey two alternate approaches: one using growth tightness and a second using Patterson--Sullivan measures.

\subsection{Growth tightness and regular languages}\label{Sec:Tightness}

Let $G$ act properly and cocompactly on a graph $\Upsilon$. As in Construction~\ref{Const:FreeAction}, we may assume without loss of generality that $G$ acts without inversion. For any normal subgroup $N$, the quotient $G/N$ acts properly and cocompactly on the quotient graph $N \backslash \Upsilon$. We say that the action of $G$ on $\Upsilon$ is \emph{growth tight} if, for any infinite normal subgroup $N$,
\begin{equation}\label{Eqn:GrowthTight}
\lambda_{G/N}(N \backslash \Upsilon) < \lambda_G(\Upsilon).
\end{equation}

Grigorchuk and de La Harpe introduced growth tightness in the context
of Cayley graphs \cite{grigorchuk-delaharpe}, and proved that the
property holds for free groups with respect to free generating
sets. Arzhantseva and Lysenok 
showed that hyperbolic groups are growth tight with respect to any
generating set \cite{arzhantseva-lysenok}. Sambusettti proved growth
tightness for free products and several other classes of groups
\cite{sambusetti:free-products}. Yang \cite{Yang:GrowthTightness} studied groups with so-called
contracting elements, and in particular proved that non-elementary relatively
hyperbolic groups are growth tight.

 Arzhantseva, Cashen, and Tao generalized the definition to the context of group actions on metric spaces \cite{arzhantseva-cashen-tao}. Through this lens, they recovered all previously known examples of tightness, and extended the result to several new contexts (for instance, CAT(0) cube complexes). The above definition is a special case of theirs.

We now restrict to the case where $G$ is a hyperbolic group with a finite symmetric generating set $S$. Following \refthm{Automatic}, let $L = L_G$ be a short-lex geodesic language mapping bijectively to $G$.
For a constant $C >0$, we say that elements $x,y$ are \emph{$C$--close} if $x=gyh$ such that $|g|, |h| \leq C$. We say that \emph{$x$ $C$--contains $y$} if the short-lex geodesic word $\overline{x} \in L$ representing $x$ contains a subword that is $C$--close to $y$. Given $w \in G$, define
\[
X(w,C) = \{x \in G : \text{$x$ does not $C$--contain $w$} \}.
\]

 The following theorem of Arzhantseva and Lysenok is  the main technical result of \cite{arzhantseva-lysenok}. See \cite[Theorem 2]{arzhantseva-lysenok}. It implies growth tightness for hyperbolic groups \cite[Theorem 1]{arzhantseva-lysenok}, and also \refthm{GrowthGeneral} for the case where $\Upsilon$ is the Cayley graph of $G$.

 \begin{theorem}\label{Thm:StrongContainmentGrowth}
 Let $G$ be a non-elementary hyperbolic group with with finite generating set $S$ and Cayley graph $\Upsilon = \Upsilon(G,S)$. Then there is a constant $C = C(G,S)$ such that
 \[
 \lambda_{X(w,C)}(\Upsilon) < \lambda_G(\Upsilon).
 \]
 \end{theorem}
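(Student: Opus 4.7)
My plan is to adapt the automaton-plus-enlargement strategy from the proof of \refthm{GrowthTorsionFree}, using $w$ in the role that the auxiliary element $g$ played there. First, I would fix the short-lex geodesic automaton $\Gamma$ for $L = L_G$ given by \refthm{Automatic}, with adjacency matrix $A$ satisfying $\rho_A = \lambda_G$ by \refcor{CannonGrowth}. Since the set of group elements at distance at most $C$ from $w$ is finite and each admits a uniformly bounded short-lex representative, I would build a finite sliding-window DFA $\mathcal{F}$ that tracks whether the prefix read so far contains, as a subword, any short-lex representative of some element $C$-close to $w$. The product $\Gamma_X := \Gamma \times \mathcal{F}$, restricted to pairs whose $\mathcal{F}$-coordinate has not yet detected a forbidden subword and then pruned, is a finite-state automaton accepting exactly the short-lex representatives of $X(w,C)$. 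Writing $A_X$ for its adjacency matrix, \refthm{PathCount} yields $\lambda_{X(w,C)}(\Upsilon) = \rho_{A_X}$.

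Next, I would enlarge $\Gamma_X$ exactly as in the proof of \refthm{GrowthTorsionFree}: attach to each accept state a directed, primed copy $\sigma'$ of the short-lex representative of $w$ that terminates at the start state. The resulting automaton $\Gamma_+$ is strongly connected and strictly larger than $\Gamma_X$, so \refthm{Domination} will give
\[
\rho_{A_X} \ < \ \rho_{A_+}.
\]
Words in the accepted language $L_+$ are interleavings $x_0 \sigma' x_1 \sigma' \cdots x_k$ with each $x_i \in L_X$, and a word of length $n$ maps to a point of the ball of radius $n$ about the identity. If the map $L_+ \to G$ has only polynomially bounded multiplicity, then $\rho_{A_+} \leq \lambda_G = \rho_A$, and the chain $\rho_{A_X} < \rho_{A_+} \leq \rho_A$ completes the argument.

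The hard part will be establishing this bounded-multiplicity statement. In \refthm{GrowthTorsionFree} it came for free from the algebraic fact that $\langle H, g\rangle \cong H * \langle g\rangle$; here $X(w,C)$ is only a set, not a subgroup, so the analogue of ``unique factorization'' must be proved geometrically. My guiding intuition is that every inserted copy of $\sigma'$ is a literal subword $C$-close to $w$, while the $x_i$ blocks are by construction forbidden from containing such a subword, so the positions of the $\sigma'$-blocks inside a concatenation should be almost determined by the group element represented. To make this rigorous I would need a Morse-type fellow-traveling bound for short-lex geodesics representing the same element, combined with a combinatorial argument showing that any re-cutting of a fixed short-lex word into admissible $L_X$-blocks separated by $w$-subwords can shift the cut-points by at most a uniformly bounded amount. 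This is where the thin-triangles geometry of $G$ must enter, and where I expect the uniformity of $C = C(G,S)$ in $w$ to be extracted: the fellow-traveling constant depends only on $(G,S)$, which will force $C$ to be chosen (once and for all, independent of $w$) at least as large as this constant.
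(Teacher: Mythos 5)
First, a point of reference: the paper does not prove \refthm{StrongContainmentGrowth} at all --- it is quoted verbatim from Arzhantseva--Lysenok \cite[Theorem 2]{arzhantseva-lysenok} and used as a black box. So your proposal has to be judged against their argument, whose entire technical content is exactly the injectivity/multiplicity statement that you defer to the end.

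Your skeleton (a regular language for the short-lex representatives of $X(w,C)$ via a product with a sliding-window automaton, the return-arc enlargement $\Gamma_+$, \refthm{Domination} for the strict inequality $\rho_{A_X}<\rho_{A_+}$, and then a multiplicity bound to force $\rho_{A_+}\le\lambda_G$) is a reasonable transplant of the proof of \refthm{GrowthTorsionFree}, and the regularity step is essentially fine (minor quibble: the forbidden patterns are \emph{all} geodesic subwords evaluating to elements $C$--close to $w$, not only short-lex ones). The genuine gap is that the hypothesis of your conditional step is false: the map $L_+\to G$ does \emph{not} have polynomially bounded multiplicity, and the ``Morse-type rigidity of cut points'' you hope for cannot hold, because the concatenations $x_0\sigma'x_1\sigma'\cdots$ can backtrack completely at the junctions. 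Concretely, $\overline{w^{-1}}$ typically lies in the language of $X(w,C)$: take $G=F_2$, $w=a^Nb^N$ with $N\gg C$; the reduced form of any $u=g\,a^Nb^N\,h$ with $|g|,|h|\le C$ contains the subword $a^{N-C}b^{N-C}$, so no subword of $b^{-N}a^{-N}$ is $C$--close to $w$, and the identity lies in $X(w,C)$ as well. Hence for each $k$ the $2^k$ distinct words $\sigma'x_1\sigma'x_2\cdots\sigma'x_k$ with $x_i\in\{\varepsilon,\overline{w^{-1}}\}$ all lie in $L_+$, have length at most $2k|w|$, and $\binom{k}{\lceil k/2\rceil}$ of them represent the single group element $w^{\lfloor k/2\rfloor}$. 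The multiplicity is therefore exponential in word length, and the cut points of these factorizations are as far from being determined as possible.

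This cancellation at the junctions is precisely what the free-product normal form neutralized in \refthm{GrowthTorsionFree} (and what the weights $1/|F|$ compensated for, with \emph{uniform} multiplicity, in the proof of \refthm{GrowthGeneral}); since $X(w,C)$ is only a subset and not a subgroup, no algebraic normal form is available and the control must come from geometry \emph{before} any fellow-traveling argument can start. This is why Arzhantseva and Lysenok do not insert $w$ itself: the inserted block must be replaced by a suitable element manufactured from $w$ (with large stable translation length and no cancellation against the geodesic words being separated), so that the concatenations become uniform local quasigeodesics and the insertion map is injective up to bounded error; proving that injectivity via thin triangles is where the constant $C=C(G,S)$ and the hyperbolicity of $G$ actually do their work, and it occupies the bulk of their paper. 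Without that ingredient, the inequality $\rho_{A_+}\le\lambda_G$ --- which is the whole theorem --- is not established.
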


Given an infinite index quasiconvex subgroup $H \subset G$, and $C$ as in  \refthm{StrongContainmentGrowth}, choose a geodesic $\gamma$ in $\Upsilon$ from $1$ to $w$, such that $w$ is very far from $H$ (as a function of $C$ and the quasiconvexity constant). Then, for every $g \in G$, the translated geodesic $g \gamma$ must have at least one endpoint far from $H$. Thus, by the quasiconvexity of $H$, no geodesic path from $1$ to $h \in H$ can contain a subpath $C$--close to $w$. It follows that $H \subset X(w,C)$, hence \refthm{StrongContainmentGrowth} gives
 \[
\lambda_H(\Upsilon) \leq \lambda_{X(w,C)}(\Upsilon) < \lambda_G(\Upsilon),
 \]
establishing \refthm{GrowthGeneral} for Cayley graphs.

\subsection{Patterson--Sullivan measures}\label{Sec:Measure}

For a group $G$ with generating set $S$, the \emph{Poincar\'e series} is
\[
\zeta_G(x) = \sum_{g \in G} e^{- x |g|},
\]
where $|g|$ denotes the length of $g$ in the generating set. If the growth rate $\lambda_G$ is a well-defined limit, there is a \emph{critical exponent} $h(G,S) = \log \lambda_G (\Upsilon(G,S))$ 
 such that $\zeta_G(x)$ converges for all $x < h(G,S)$ and diverges for $x > h(G,S)$. The Poincar\'e series is used to construct a probability measure on $G \cup \bdy G$, called the Patterson--Sullivan measure. See Coornaert \cite{Coornaert93} or Calegari \cite[Section 2.5]{calegari:ergodic-groups} for more detail.

The group $G$ is said to be \emph{of divergence type} if $\zeta_G(x)$ diverges at $x = h(G,S)$. One immediate consequence of Cannon's work on the growth of regular languages (more precisely, of \refprop{PolyExp} and \refcor{CannonGrowth}) is that hyperbolic groups have divergence type. We can now state the following theorem of Matsuzaki, Yabuki, and Jaerisch \cite[Corollary 2.8]{MYJ}, restated in the notation of our paper.

\begin{theorem}\label{Thm:MYJ}
Let $G$ be a non-elementary group acting discretely on a hyperbolic metric space $\Upsilon$. Suppose that
$H \subset G$ is a subgroup of divergence type, and that the limit set $\Lambda(H)$  is a proper subset of  $ \Lambda(G)$. Then $\lambda_H(\Upsilon) < \lambda_G(\Upsilon)$.
\end{theorem}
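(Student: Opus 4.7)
The plan is a proof by contradiction using Patterson--Sullivan theory on $\bdy \Upsilon$. Suppose $\lambda_H(\Upsilon) = \lambda_G(\Upsilon)$, so that the critical exponents agree: $\delta_H = \delta_G =: \delta$. Since $H \subseteq G$, term-by-term comparison of Poincar\'e series gives $\zeta_G(\delta) \geq \zeta_H(\delta) = +\infty$, so $G$ is also of divergence type at $\delta$. Thus both $H$ and $G$ admit the Patterson construction at exponent $\delta$ without any modifying slowly-varying factor.

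First I would form the Patterson--Sullivan measures $\mu_H$ and $\mu_G$ on $\bdy \Upsilon$. By the Shadow Lemma for discrete isometry groups of Gromov hyperbolic spaces (Coornaert), combined with divergence type, $\mu_H$ and $\mu_G$ are non-atomic $\delta$-conformal probability measures whose supports lie respectively in the conical limit sets $\Lambda_c(H) \subseteq \Lambda(H)$ and $\Lambda_c(G) \subseteq \Lambda(G)$; in particular $\mu_H(\Lambda(H)) = \mu_G(\Lambda(G)) = 1$. Since $\delta$-conformality depends only on Busemann cocycles (intrinsic to $\Upsilon$), the measure $\mu_H$ is also $\delta$-conformal in the sense relevant to $G$, even though only $H$-quasi-invariant.

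The crux of the argument is the Hopf--Tsuji--Sullivan--Roblin dichotomy: for a group of divergence type at its critical exponent, the boundary action on $(\bdy \Upsilon, \mu_G)$ is ergodic, and $\mu_G$ is the unique (up to positive scalar) $\delta$-conformal probability measure giving full mass to $\Lambda_c(G)$. From $\mu_H$ I would build a $G$-quasi-invariant $\delta$-conformal measure $\nu$ by suitably normalizing and summing the translates $g_* \mu_H$ over coset representatives $g \in G / H$; the divergence-type hypothesis is precisely what makes this sum have the right convergence/equidistribution properties. By uniqueness, $\nu$ must be proportional to $\mu_G$, hence $\operatorname{supp}(\mu_G) \subseteq \overline{G \cdot \Lambda(H)}$. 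But since $\Lambda(H) \subsetneq \Lambda(G)$ is closed, one can find $\eta \in \Lambda(G) \setminus \Lambda(H)$ and an open neighborhood $U \ni \eta$ in $\bdy \Upsilon$ disjoint from $\Lambda(H)$; combining minimality of the $G$-action on $\Lambda(G)$ with $G$-ergodicity of $\mu_G$ forces $\mu_G(U \cap \Lambda_c(G)) > 0$, contradicting the fact that $\nu$, and hence $\mu_G$ up to scalar, is supported on the $G$-saturation of $\Lambda(H)$ (which, via careful use of the Shadow Lemma near $\eta$, cannot meet $U$ in a set of positive mass).

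The hard part will be making the translate-averaging and uniqueness step rigorous in the generality of a discrete, possibly non-cocompact action on an arbitrary proper Gromov hyperbolic space: the classical Hopf--Tsuji--Sullivan theory is cleanest in the Kleinian or CAT($-1$) setting, and one must verify (as is done in \cite{MYJ}) that the Shadow Lemma, the dichotomy, and the uniqueness of conformal measures extend under only the hypotheses of discreteness and divergence type. Once those pieces are in place, the geometric conclusion that $\Lambda(H) \subsetneq \Lambda(G)$ forces a strict drop in critical exponent is the natural endpoint.
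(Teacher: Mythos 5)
First, a point of comparison: the paper does not prove this statement at all --- it is quoted verbatim (in translated notation) from Matsuzaki--Yabuki--Jaerisch \cite[Corollary 2.8]{MYJ} and used as a black box to rederive \refthm{GrowthGeneral}. So your proposal is being measured against the argument in \cite{MYJ}, whose toolbox (Patterson--Sullivan measures, the Shadow Lemma, and the Hopf--Tsuji--Sullivan dichotomy for divergence-type groups) you have correctly identified.

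However, your central step runs the conformality argument in the wrong direction, and as written it fails. You propose to \emph{induce} a $G$--conformal measure $\nu$ from $\mu_H$ by summing the translates $g_*\mu_H$ over coset representatives $g \in G/H$. When $[G:H]=\infty$ (the only interesting case, since $\Lambda(H)\subsetneq\Lambda(G)$ forces this), that sum has no reason to converge, and there is no normalization that restores $\delta$--conformality; divergence type of $H$ does not rescue this. Worse, even granting the existence of such a $\nu\propto\mu_G$, your conclusion $\operatorname{supp}(\mu_G)\subseteq\overline{G\cdot\Lambda(H)}$ is vacuous: since $G$ acts minimally on $\Lambda(G)$ and $\Lambda(H)\neq\emptyset$, the closure of $G\cdot\Lambda(H)$ already contains all of $\Lambda(G)$, so the neighborhood $U$ of $\eta\in\Lambda(G)\setminus\Lambda(H)$ that you construct \emph{does} meet the saturation, and no contradiction results. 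The correct direction is \emph{restriction}, not induction: a $\delta$--conformal measure for $G$ is automatically $\delta$--conformal for the subgroup $H$ (conformality is a cocycle condition checked element by element). Since $H$ has divergence type at $\delta=\delta_H=\delta_G$, Sullivan-type uniqueness (the part of the dichotomy you already invoke) forces $\mu_G$ to be proportional to $\mu_H$, hence $\Lambda(G)=\operatorname{supp}(\mu_G)=\operatorname{supp}(\mu_H)\subseteq\Lambda(H)$, contradicting $\Lambda(H)\subsetneq\Lambda(G)$. (Here $\operatorname{supp}(\mu_G)=\Lambda(G)$ follows from quasi-invariance plus minimality.) You should also handle separately the case where $H$ is elementary, since the ergodicity and uniqueness statements you cite require $H$ non-elementary; in that case $\lambda_H=1<\lambda_G$ directly because $G$ is non-elementary.
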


We observe that \refthm{GrowthGeneral}
follows as a consequence of \refthm{MYJ}, because quasiconvex subgroups of hyperbolic groups have divergence type, and quasiconvex subgroups of infinite index have limit set $\Lambda(H) \subsetneq \Lambda(G)$.

\section{Beyond hyperbolic groups}\label{Sec:BeyondHyperbolic}

The goal of this section is to prove Theorems~\ref{Thm:RelHyperbolic} and~\ref{Thm:Cubulated}, which were stated in the introduction. Although the hypotheses of these theorems are quite different (one  concerns relatively hyperbolic groups, the other cubulated groups), the proof strategy is the same. Both proofs rely on the notion of 
growth tightness, defined in \refsec{Tightness}, as well as weak proper discontinuity, which we define in \refsec{WPD}.

In \refprop{SmallGrowthTight}, we observe that when $G$ admits both a growth tight action on $\Upsilon$ and a weakly properly discontinuous action on a hyperbolic space $X$, the group $G$ grows faster than any subgroup acting elliptically on $X$. This result may be of independent interest.

In \refsec{RelHyperbolic}, we define relative hyperbolicity and present the vocabulary and tools related to cone-off
constructions. Then we construct the space $X$ required to apply \refprop{SmallGrowthTight} and prove \refthm{RelHyperbolic}.

In \refsec{Cubulated}, we recall the definitions of CAT(0) cube complexes and their hyperplanes. Then we prove  \refthm{Cubulated}, again relying on \refprop{SmallGrowthTight}.

\subsection{Weak proper discontinuity meets tightness}\label{Sec:WPD}

Suppose that $G$ acts by isometries on a hyperbolic metric space $X$. We say that the action is \emph{weakly properly discontinuous (WPD)} if the following conditions hold:
\begin{enumerate}
\item $G$ is non-elementary,
\item $G$ contains at least one loxodromic element on $X$, and
\item for every loxodromic $g \in G$, and every $x \in X$, and every $r > 0$, there exists $n > 0$ such that the set
\(
\{ \gamma \in G  : \dist(x, \gamma x) \leq r, \: \dist (g^n x, \gamma g^n x ) \leq r \}
\)
is finite.
\end{enumerate}

This notion was introduced by Bestvina and Fujiwara \cite{Bestvina-Fujiwara:bounded-cohomology}. The main observation of this section is that growth tightness combined with the WPD property implies that $G$ grows faster than any subgroup acting elliptically. We say that the action of $H \subset G$ on $X$ is \emph{elliptic} if there is a bounded orbit $Hx \subset X$.

\begin{proposition}\label{Prop:SmallGrowthTight}
Let $G$ be a group with a proper, cocompact, and growth tight action on a graph $\Upsilon$.
Assume also that $G$ has a WPD action on a hyperbolic space $X$.
Suppose that $H \subset G$ is a subgroup whose action on $X$ is elliptic. 

Then the
growth rate of $H$ for $\Upsilon$  is strictly smaller
than the growth rate of $G$:
$$  \lambda_H( \Upsilon)   <  \lambda_G(\Upsilon). $$
\end{proposition}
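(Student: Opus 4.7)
The strategy is to combine the Dahmani--Guirardel--Osin (DGO) machinery for WPD actions with the growth tightness hypothesis. The WPD assumption guarantees a loxodromic element $g \in G$ for the action on $X$. Applying the DGO theorem on rotating families, I would select a positive integer $n$ large enough that every non-trivial element of the normal closure $N := \nclose{g^n}$ acts loxodromically on $X$.

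Since $H$ acts elliptically on $X$, no non-identity element of $H$ can be loxodromic, so the previous step forces $H \cap N = \{1\}$. Consequently the projection $\pi \colon G \to G/N$ restricts to an injective homomorphism $\pi|_H \colon H \hookrightarrow G/N$; write $\bar H$ for its image, and $\bar b$ for the image of $b$ in $N \backslash \Upsilon$. The canonical surjection $\Upsilon \to N \backslash \Upsilon$ is $1$-Lipschitz, so for each $h \in H$,
\[
\dist_{N \backslash \Upsilon}(\bar b, \, \pi(h) \bar b) \; \leq \; \dist_\Upsilon(b, hb).
\]
Combined with the injectivity of $\pi|_H$, this yields $f_{H, \Upsilon}(k) \leq f_{\bar H, N \backslash \Upsilon}(k) \leq f_{G/N, N \backslash \Upsilon}(k)$ for every $k \in \naturals$, and taking $k$-th roots gives the non-strict bound $\lambda_H(\Upsilon) \leq \lambda_{G/N}(N \backslash \Upsilon)$.

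Finally, since $N$ contains the infinite-order element $g^n$, it is an infinite normal subgroup of $G$. Growth tightness of the action $G \curvearrowright \Upsilon$, as in \refeqn{GrowthTight}, therefore yields the strict inequality $\lambda_{G/N}(N \backslash \Upsilon) < \lambda_G(\Upsilon)$, and chaining the estimates completes the proof. The only nontrivial input is the DGO rotating family construction, which is what is needed to guarantee that $N$ avoids $H$; once this is in place, the remainder reduces to standard bookkeeping about growth functions under quotient maps.
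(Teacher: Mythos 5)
Your proposal is correct and follows essentially the same argument as the paper: invoke the Dahmani--Guirardel--Osin theorem to get a normal closure $N = \nclose{g^n}$ all of whose nontrivial elements are loxodromic on $X$, deduce $H \cap N = \{1\}$ from ellipticity, push the growth function through the $1$-Lipschitz quotient $\Upsilon \to N\backslash\Upsilon$, and finish with growth tightness. Your explicit check that $N$ is infinite (so that growth tightness applies) is a detail the paper leaves implicit, but otherwise the two proofs coincide.
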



\begin{proof}
Let $\gamma$ be a loxodromic element of $G$ on $X$. By
\cite[Theorem 8.7]{DGO}, there exists $m>0$ such that in the normal closure
$N = \nclose{\gamma^m}_G$,
all non-trivial
elements are loxodromic on $X$. In particular, this subgroup has trivial intersection with $H$. Thus the
quotient map $G \to \overline{G} =  G/N$ restricts to an embedding $H \to \overline{H} \subset \overline{G}$. The quotient map $\Upsilon \to \overline{\Upsilon} =  N \backslash \Upsilon$ is also
$1$--Lipschitz, implying that   $ f_{ H,
  \Upsilon} (n) \leq f_{\overline{H},  \overline{\Upsilon} } (n)$ for every $n$. Thus the growth rates satisfy
 \[
  \lambda_{H}(\Upsilon)  \leq
    \lambda_{\overline{H} }(\overline{\Upsilon} )  \leq
   \lambda_{\overline{G} }(\overline{\Upsilon} ) <
 \lambda_{
  G}(\Upsilon).
\]
 Here, the first inequality follows from the above inequality on $f_H$, the 
 second inequality is by set containment, and the final strict inequality
follows from \refeqn{GrowthTight} because we have assumed the $G$--action on $\Upsilon$ is
growth tight.
 \end{proof}

\subsection{Relatively hyperbolic groups}\label{Sec:RelHyperbolic}

In order to apply \refprop{SmallGrowthTight} to relatively hyperbolic groups, we recall the definitions.

Consider a 
group $G$ with a finite generating set $S$.
A \emph{peripheral structure} 
$\mathcal{P}$ is a collection of subgroups closed under conjugation. 
We will work with peripheral structures containing finitely many conjugacy classes. Let 
$P_1,\dots,
P_k \in \mathcal{P}$ be subgroups representing the conjugacy classes in $\mathcal{P}$.

The \emph{coned-off Cayley graph} for $S$ over $\{P_1,\dots,
P_k\}$, denoted $X_0$,  is obtained from the Cayley graph $\Upsilon(G,S)$ by adding
a vertex for each left coset of each $P_i$ and linking it by length
$1$ edges to every element of the coset. Observe that $G$ acts by isometries on $X_0$.

The \emph{angular distance} at a vertex $v$ in
$X_0$ is the distance $\hat{d}:{\rm link}(v) \times {\rm link}(v)\to
\mathbb{N}\cup \{\infty\}$ defined by
the length of a shortest path in $X_0$ between $a, b\in {\rm link}(v)$ that
does not contain $v$.

We say that $(G,\mathcal{P})$ is \emph{relatively hyperbolic} if $X_0$ is
Gromov hyperbolic and if the angular distance at each vertex is
locally finite. See e.g.\ Hruska \cite{HruskaRelQC} for other equivalent definitions.

Observe that $X_0$ naturally contains $\Upsilon(G,S)$. Thus we say that
a subgroup $H \subset G$ is \emph{relatively quasiconvex} in $(G,\mathcal{P})$ if  its
image in $X_0$ is quasiconvex \cite{HruskaWisePacking}. 
The properties of relative hyperbolicity and relative quasiconvexity are invariant under 
quasi-isometry, hence do not depend on the generating set $S$.

\begin{proposition}\label{Prop:SmallConstruction}
Let $(G,\mathcal{P})$ be a non-elementary relatively hyperbolic group. Suppose that $H$ is
an infinite-index subgroup of $G$, which is
relatively quasiconvex  in $(G,\mathcal{P})$. Then $G$ has an isometric, WPD action on a hyperbolic
space $X_h$. Furthermore, the action of $H$
is elliptic.
\end{proposition}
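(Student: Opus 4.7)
My plan is to construct $X_h$ as a further cone-off of the coned-off Cayley graph $X_0 = X_0(G,\mathcal{P})$. Specifically, I will enlarge the peripheral structure by adjoining the conjugates of an appropriate extension $H^+$ of $H$, so that in the resulting coned-off Cayley graph a cone vertex is stabilized by $H^+ \supseteq H$, forcing $H$ to have a radius-$1$ orbit and hence to act elliptically. The isometric $G$-action on $X_h$ is the usual one on a coned-off Cayley graph.

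First I would replace $H$ by a \emph{peripherally full} relatively quasiconvex extension $H^+\supseteq H$, obtained by adjoining the conjugate peripherals $gPg^{-1}$ (taken up to $H$-conjugacy) for which $|H\cap gPg^{-1}|=\infty$. Work of Hruska \cite{HruskaRelQC} shows both that only finitely many such $H$-conjugacy classes occur and that $H^+$ is again relatively quasiconvex, now with the additional property that $H^+\cap gPg^{-1}$ is either finite or of finite index in $gPg^{-1}$. Because $H$ has infinite index in $G$ and $(G,\mathcal{P})$ is non-elementary, a standard ping-pong argument on the Bowditch boundary produces loxodromic elements on $X_0$ whose fixed point pairs avoid $\partial H^+$ and the parabolic fixed points; such elements witness that $H^+$ still has infinite index in $G$.

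Next, I would form the enlarged peripheral structure $\mathcal{P}^+ = \mathcal{P}\cup\{gH^+g^{-1} : g\in G\}$ and invoke the relative combination/cone-off theorems of Hruska--Wise \cite{HruskaRelQC} and Dahmani--Mj \cite{Dahmani-Mj}: full relative quasiconvexity of $H^+$ supplies the almost-malnormality hypotheses needed to conclude that $(G,\mathcal{P}^+)$ is again relatively hyperbolic. Let $X_h$ be the coned-off Cayley graph for $(G,\mathcal{P}^+)$; it is then Gromov hyperbolic. The cone vertex corresponding to the coset $H^+$ itself is fixed by $H^+\supseteq H$, so $H$ acts elliptically.

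For WPD, I would invoke the fact that the action of any relatively hyperbolic group on the coned-off Cayley graph of its peripheral structure is acylindrical (a consequence of Osin's characterization of relative hyperbolicity in terms of acylindrical actions), and acylindricity is strictly stronger than WPD on every loxodromic element. Non-elementariness of the $G$-action on $X_h$ and the existence of a loxodromic element follow because the boundary-pairs of the loxodromics produced above were chosen to lie outside every new cone-off locus, so those elements remain loxodromic after the further coning-off.

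The main obstacle I anticipate is the construction of $H^+$: one needs it to be simultaneously peripherally full (so the combination theorem applies) and of infinite index in $G$ (so the resulting action is still non-elementary, which is essential for \refprop{SmallGrowthTight} downstream). Both properties are precisely what the machinery of \cite{HruskaRelQC} and \cite{Dahmani-Mj} is designed to deliver, but getting the bookkeeping right — especially controlling the finitely many peripheral intersections of $H$ and verifying their almost-malnormality after adjunction — is where the technical work lies; the remaining steps are then essentially formal.
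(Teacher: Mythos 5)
There is a genuine gap at the step where you adjoin $\mathcal{P}^+ = \mathcal{P}\cup\{gH^+g^{-1}\}$ and claim $(G,\mathcal{P}^+)$ is again relatively hyperbolic. Making $H^+$ \emph{peripherally full} only controls the intersections $H^+\cap gPg^{-1}$ with the old peripherals; the peripheral-extension (combination) theorems additionally require $H^+$ to be \emph{almost malnormal} relative to $\mathcal{P}$, i.e.\ $H^+\cap gH^+g^{-1}$ finite or parabolic for all $g\notin H^+$. Full relative quasiconvexity does not supply this. Already for $G=F_2=\langle a,b\rangle$ with $\mathcal{P}=\emptyset$ and $H=\langle a^2,b\rangle$ one has $H\cap aHa^{-1}\supseteq\langle a^2\rangle$ infinite with $a\notin H$, so $(G,\{gHg^{-1}\})$ is not relatively hyperbolic, the cosets of $H$ are not mutually cobounded in $X_0$, and your appeal to Osin's acylindricity criterion for the coned-off Cayley graph of $(G,\mathcal{P}^+)$ has no valid premise. (The single cone-off over uniformly quasiconvex cosets may still be hyperbolic, but your argument for WPD collapses.)

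This failure of malnormality is exactly what the paper's proof is built to handle. It invokes Hruska--Wise to get that $H$ has finite relative \emph{height} $h$ in $X_0$, then cones off iteratively: first the cosets of the deepest (height-$h$) intersections of essentially distinct conjugates of $H$, then height $h-1$, and so on up to $H$ itself. At each stage Dahmani--Mj's graded relative hyperbolicity guarantees the family being coned is uniformly quasiconvex and \emph{mutually cobounded} in the previous stage's metric, which is precisely the hypothesis of \cite[Proposition 5.40]{DGO} needed to propagate acylindricity (hence WPD) by induction from Osin's base case on $X_0$. If you want to salvage a one-step version of your argument, you must either assume $H$ has height $\leq 1$ (almost malnormal relative to $\mathcal{P}$), or replace the single cone-off by this height-graded tower.
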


In fact, we will prove the stronger statement that the action of $G$ on $X_h$ is \emph{acylindrical}.
 This means that for all $r > 0$ there exist $R, N>0$ such that for all $x,y\in X$ with
   $\dist(x,y)\geq R$, the set $\{\gamma \in G : \dist (x, \gamma x) \leq r, \: \dist(y, \gamma y)  \leq r\}$
    has cardinality at most $N$. This property is stronger than WPD because $N$ is uniform over $G$ and $y$ is less restricted than the element $g^n x$ in the definition of WPD.

\begin{proof}[Proof of \refprop{SmallConstruction}]
Let $X_0$ denote the coned-off Cayley graph of $G$, as above.
Hruska and Wise \cite[Theorem 1.4]{HruskaWisePacking} proved that $H$ has \emph{finite
relative height}:
 there exists a smallest integer $h \geq 0$ 
 such that any intersection of essentially distinct
conjugates $g_iHg_i^{-1}$ for $i=0, \dots, h$ has finite diameter in $X_0$. 
(Here, one says that
conjugates $g_iHg_i^{-1}$  are \emph{essentially distinct} if the
cosets $g_iH$ are all distinct.)

Observe that a subgroup of a peripheral group $P_i$ has relative height $0$, and a malnormal subgroup $H \subset G$ has relative height $1$.

For $1 \leq i \leq h$, let $\mathcal{H}_i$ denote  the collection of intersections of
$i$--tuples of essentially distinct conjugates of $H$. In this way,
$\mathcal{H}_{h+1}$ consists of finite or parabolic groups, but, if
$h\neq 0$, $\mathcal{H}_{h}$ contains groups with infinite diameter in
$X_0$.  We may choose
conjugacy representatives $(\mathcal{H}_i)_0$ for elements in $\mathcal{H}_i$, and
 let 
$\mathcal{C}\mathcal{S}\mathcal{H}_i$ denote the collection of the
left cosets of the groups of $(\mathcal{H}_i)_0$ that are finite or
parabolic, and of the left cosets of the stabilizers of the limit sets
in the boundary 
of the other groups of $(\mathcal{H}_i)_0$.

Starting from $X_0$, let $X_{i}$ be the cone-off of $X_{i-1}$ over the collection of subsets  $\mathcal{C}\mathcal{S}\mathcal{H}_{h+1-i}$.

In \cite[Corrigendum Theorem 3]{Dahmani-Mj}, \begin{com}{added reference and the term \emph{saturated}}\end{com}  Dahmani and Mj proved that $G$, with the relative metric of
$X_0$, has saturated  graded relative hyperbolicity with respect to $H$. This means that every
$X_i$ is hyperbolic  for $i =0, \ldots, h$, and
the  angular distance at each cone-vertex is bounded from below by a
proper function of the distance in $X_{i-1}$.  
The key fact in our setting is that $X_i$ is hyperbolic for all $i$
and that   the
elements of  $\mathcal{C}\mathcal{S}\mathcal{H}_{h-i+1}$ are uniformly
quasiconvex, and mutually cobounded  in the metric of $X_{i-1}$. See \cite[Corrigendum Proposition 2]{Dahmani-Mj}. Here, \emph{mutually cobounded} means that for
any pair of distinct cosets, the shortest point projection of one
to the other has uniformly bounded diameter.

By the time the inductive construction reaches $X_{h}$, the collection $\mathcal{C}\mathcal{S}\mathcal{H}_{1}$ has been
 coned off, which means the cosets of $H$ have been coned-off.

\begin{lemma}\label{Lem:Acylindrical}
$X_h$ is hyperbolic, and the action of $G$ on $X_{h}$ is acylindrical.
\end{lemma}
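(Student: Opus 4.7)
The hyperbolicity claim is already contained in the citation immediately preceding the lemma: Dahmani--Mj's Proposition 5.4 asserts that each of the spaces $X_0, X_1, \dots, X_h$ produced by the graded relatively hyperbolic cone-off is Gromov hyperbolic, with uniform constants along the way. So the first half of the lemma needs no further argument beyond invoking this.

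My plan for acylindricity is to induct on $i$, proving that $G$ acts acylindrically on every $X_i$ for $0 \leq i \leq h$. For the base case, $X_0$ is the coned-off Cayley graph of the relatively hyperbolic pair $(G,\mathcal{P})$, and acylindricity of this action is Bowditch's original theorem on relatively hyperbolic groups. For the inductive step, assume $G$ acts acylindrically on $X_{i-1}$; we pass to $X_i$ by coning off the $G$-invariant family $\mathcal{CH}_{h-i+1}$, whose members are uniformly quasiconvex in $X_{i-1}$, mutually cobounded, and whose cone-vertex angular distances are locally finite (all by the cited Propositions 4.15(2) and 5.4 of Dahmani--Mj).

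The mechanism of the inductive step is the standard two-case cone-off acylindricity argument. Fix $r > 0$; given $x, y \in X_i$ with $\dist_{X_i}(x,y)$ at least some large $R$, pick an $X_i$-geodesic $\sigma$ from $x$ to $y$ and decompose it into an alternating sequence of $X_{i-1}$-segments and passages through cone-vertices. For $\gamma \in G$ with $\dist_{X_i}(x, \gamma x), \dist_{X_i}(y, \gamma y) \leq r$, the translate $\gamma\sigma$ is a geodesic near $\sigma$. Either (a) some $X_{i-1}$-segment of $\sigma$ is sufficiently long that we can extract endpoints $x', y'$ with $\dist_{X_{i-1}}(x', y')$ large while $\dist_{X_{i-1}}(\gamma x', x')$ and $\dist_{X_{i-1}}(\gamma y', y')$ remain bounded by a constant depending only on $r$ and the hyperbolicity constants, and the inductive hypothesis caps the number of such $\gamma$; or (b) $\sigma$ passes through a long sequence of cone-points, in which case we select a cone-vertex $c$ on $\sigma$ whose image $\gamma c$ lies within bounded $X_i$-distance of some cone-vertex on $\sigma$. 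Mutual coboundedness and local finiteness of the angular distance at cone-vertices force $\gamma c = c$ once $R$ is large, so $\gamma$ preserves a coset in $\mathcal{CH}_{h-i+1}$; applying case~(a) to a subgeodesic of $\sigma$ in $X_{i-1}$ either side of $c$ then reduces us again to the inductive hypothesis.

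The main obstacle is the bookkeeping in the dichotomy above, ensuring that one may always extract either a long $X_{i-1}$-segment on which to apply inductive acylindricity or a cone-vertex that is honestly fixed (not merely coarsely preserved) by $\gamma$. This is precisely the content of the standard Bowditch-style cone-off argument; the required inputs (uniform quasiconvexity, mutual coboundedness, locally finite angular distance, and hyperbolicity at every stage) are exactly what Dahmani--Mj's graded relative hyperbolicity provides, so the induction closes and yields acylindricity of $G \curvearrowright X_h$, which in particular implies the WPD property needed in \refprop{SmallConstruction}.
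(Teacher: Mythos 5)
Your overall architecture coincides with the paper's: hyperbolicity of every $X_i$ is taken from Dahmani--Mj's graded relative hyperbolicity, and acylindricity is proved by induction on the grading, using the uniform quasiconvexity and mutual coboundedness of the family coned off at each stage. The difference is in how the inductive step is discharged, and that is where your proposal has a genuine gap. The paper does not re-prove that coning off such a family preserves acylindricity; it invokes \cite[Proposition 5.40]{DGO} as a black box and then notes that the DGO cone-off (built with cones of much larger radius) is equivariantly quasi-isometric to the one used here, acylindricity being preserved under equivariant quasi-isometries. You instead sketch a direct Bowditch-style two-case argument and defer its crux to ``standard bookkeeping.'' But the crux is not bookkeeping. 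In your case (a), closeness of $x$ and $\gamma x$ in $X_i$ does not yield closeness of intermediate points $x'$ and $\gamma x'$ in $X_{i-1}$ (coning off only shrinks distances), so the inductive hypothesis cannot be applied without a genuine lifting argument. Worse, in case (b), even granting that $\gamma$ fixes the cone vertex $c$ over a coset $gQ$, the nearby points of $\sigma$ lie in that coset, whose diameter in $X_{i-1}$ is typically infinite --- that is precisely why it was coned off --- so ``applying case (a) on either side of $c$'' does not reduce to acylindricity in $X_{i-1}$. Bounding the number of such $\gamma$ requires the local finiteness of the angular metric at $c$ together with a quantitative projection argument using mutual coboundedness; this is exactly the content of \cite[Proposition 5.40]{DGO} and must be either cited precisely or actually carried out.

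Two smaller points. The base case is due to Osin \cite[Proposition 5.2]{Osin:AcylindricallyHyperbolic} (or, as the paper notes, follows from \cite[Proposition 5.40]{DGO} applied to coning off horoballs in a cusp-uniform space); attributing it to Bowditch is inaccurate, as Bowditch's acylindricity theorem concerns the mapping class group acting on the curve complex. And if you repair the inductive step by citing DGO rather than arguing directly, you must still address the mismatch between their cone-off construction and the one used here, which the paper does via the equivariant quasi-isometry mentioned above.
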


\begin{proof} We will use \cite[Proposition 5.40]{DGO}, which ensures that if $X$ is a
hyperbolic graph, with an isometric $G$--action,  and if
$\mathcal{Q}$ is an invariant   collection of uniformly
quasiconvex subspaces that are mutually cobounded, then the action
of $G$ on  a certain
cone-off  of $X$ over $\mathcal{Q}$  is acylindrical.  We take the
precaution of saying ``a certain cone-off'' because the construction of
\cite{DGO} is different from the the one we defined here (in particular, the radius
of the cones is much larger). However, there is
an equivariant quasi-isometry between both cone-offs, and acylindricity is
preserved by equivariant quasi-isometries. Thus,  the cone-off construction we
use here, on a hyperbolic space, over
a collection of uniformly quasiconvex, uniformly mutually cobounded subsets, preserves
the acylindricity of the action.

From here, the proof proceeds by induction on height.
For the base case, note that the action of $G$ on $X_0$ is
acylindrical by the work of Osin \cite[Proposition 5.2]{Osin:AcylindricallyHyperbolic}. This also follows
from the above \cite[Proposition 5.40]{DGO}  applied to the coning-off of horoballs in a
cusp-uniform space associated to $(G,\mathcal{P})$, which gives a space
equivariantly quasi-isometric to $X_0$ (see \cite[Proposition 2.8]{Dahmani-Mj}).

 Assume
that the action of $G$ is acylindrical on $X_i$.
 The construction of $X_{i+1}$
is a coning-off of a family of quasiconvex, mutually cobounded subsets of the hyperbolic
space  $X_i$.
By \cite[Proposition 5.40]{DGO}, combined with the equivariant quasi-isometry described above, the action of $G$ is therefore  acylindrical
on $X_{i+1}$.

Thus by induction, $X_{h}$ is hyperbolic and the action of $G$ on $X_{h}$ is
acylindrical.
\end{proof}

Notice that the action of $H$ on $X_{h}$ is elliptic since it
has been coned-off. Furthermore, since $H$ has infinite index in $G$
and is relatively quasi-convex in $(G, \mathcal{P})$ which is non-elementary relatively hyperbolic,
its limit set is not the whole boundary of $G$,  and the stabilizer of its
limit set  has infinite index in $G$. 
The
diameter of $X_{h}$ is infinite.   Thus, by a theorem of Osin \cite[Theorem 1.1]{Osin:AcylindricallyHyperbolic}, the action of $G$ on $X_h$ contains loxodromic elements. 
\end{proof}

We can now restate and prove \refthm{RelHyperbolic}.

\begin{named}{\refthm{RelHyperbolic}}
Let $(G,\mathcal{P})$ be a non-elementary relatively hyperbolic group, and $H$ a relatively
quasiconvex subgroup of $(G,\mathcal{P})$ of infinite index in $G$. Suppose that $G$ acts properly and cocompactly on a graph $\Upsilon$.
Then
\[
\lambda_H(\Upsilon) < \lambda_G (\Upsilon).
\]
\end{named}

\begin{proof}
We need to check the  the hypotheses of 
\refprop{SmallGrowthTight}. Indeed,  \refprop{SmallConstruction} provides a WPD action by $G$ on a hyperbolic space $X = X_h$, where $H$ acts elliptically. The growth tightness of non-elementary relatively hyperbolic
groups is a theorem of Yang  \cite[Corollary
1.7]{Yang:GrowthTightness} and Arzhantseva--Cashen--Tao \cite[Theorem 8.6]{arzhantseva-cashen-tao}. Thus, by \refprop{SmallGrowthTight}, we have $\lambda_H(\Upsilon) < \lambda_G (\Upsilon)$.
\end{proof}

\subsection{Cubulated groups}\label{Sec:Cubulated}

\refprop{SmallGrowthTight} also enables us to extend \refthm{GrowthGeneral} to the context of groups acting on CAT(0) cube complexes. We recall the definitions very quickly, while pointing the reader to e.g.\ \cite{HagenArboreal}  or \cite{WiseCBMS2012} for a detailed treatment.

For $0 \leq n < \infty$, an \emph{$n$-cube} is $[-\frac{1}{2}, \frac{1}{2}]^n$. A \emph{cube complex} is the union of a number of cubes, possibly of different dimensions, glued by isometry along their faces. A cube complex $\calX$ is called  \emph{CAT(0)} if it is simply connected, and if the link of every vertex is a flag simplicial complex. 

A CAT(0) cube complex $\calX$ has two natural metrics: the $L^1$ or \emph{combinatorial metric} that metric that agrees on $\calX^{(0)}$ with the graph metric on $\Upsilon = \calX^{(1)}$; and the $L^2$ or \emph{CAT(0) metric} obtained by extending the Euclidean path-metric on the cubes. (By a theorem of Gromov, the $L^2$ metric indeed satisfies the CAT(0) inequality for geodesic triangles. See Leary \cite{Leary_KanThurston} and the references therein.) The theorem below is valid in either metric on $\calX$.

A \emph{midcube} of an $n$-cube $C$ is an $(n-1)$ cube obtained by restricting one coordinate of $C$ to $0$. A \emph{hyperplane} $V \subset \calX$ is a connected union of midcubes, with the property that $V$ intersects every cube of $\calX$ in a midcube or in the empty set. Every hyperplane separates $\calX$. A hyperplane is  $V$ called \emph{essential} if both components of $\calX - V$ contain points arbitrarily far from $V$.

The \emph{carrier} of a hyperplane $V$ is the subcomplex of $\calX$ consisting of all cubes that meet $V$.
The adjacency of hyperplane carriers can be encoded in the \emph{contact graph} $\calC \calX$, introduced by Hagen \cite{HagenArboreal}. The vertices of this graph are hyperplanes of $\calX$, and hyperplanes $V,W$ are connected by an edge of $\calC \calX$ if and only if their carriers are disjoint. Hagen proved that $\calC \calX$ is a quasi-tree, and in particular is hyperbolic \cite{HagenArboreal}.

With this background, we can now state and prove the following result.

\begin{named}{\refthm{Cubulated}}
Let $G$ be a non-elementary group, acting properly and cocompactly on a CAT(0) cube complex $\calX$. Suppose that $\calX$ does not decompose as a product. Then, for every subgroup $H \subset G$ stabilizing an essential hyperplane of $\calX$, we have
\[
\lambda_H(\calX) < \lambda_G (\calX).
\]
\end{named}

\begin{proof}
First, we may assume without loss of generality that all hyperplanes of $\calX$ are essential. Otherwise, replace $\calX$ by its essential core $\mathcal{Y}$, as provided by the essential core theorem of Caprace and Sageev \cite[Proposition 3.5]{CapraceSageev2011}. The hyperplanes of $\mathcal{Y}$ will be in bijective correspondence with the essential hyperplanes of $\calX$. Furthermore, $\calX$ is contained in a bounded neighborhood of $\calY$,  hence $\lambda_G(\calX) = \lambda_G(\calY)$ and $\lambda_H(\calX) = \lambda_H(\calY)$.

Under the hypotheses of the theorem, Caprace and Sageev proved that some element $g \in G$ has a rank-one action on $\calX$. This means that $g$ acts by translation on a combinatorial geodesic axis $A$, and furthermore this axis does not bound a half-plane \cite{CapraceSageev2011}. Since all hyperplanes of $\calX$ are essential, the construction of \cite[Section 6.1]{CapraceSageev2011} produces an element $g$ such that no power $g^n$ stabilizes a hyperplane. Furthermore, there is a hyperplane $V$ intersecting $A$, such that $V$ and $gV$ are not neighbors in $\calC \calX$.

By a theorem of Behrstock, Hagen, and Sisto \cite[Theorem A]{BHS:curve-complex-cubical}, the $G$--action on $\calC \calX$ is WPD. In fact, the rank-one element $g$ produced by Caprace and Sageev is loxodromic on $\calC \calX$. Meanwhile, by the definition of $\calC \calX$, any subgroup $H \subset G$ stabilizing a hyperplane $V \subset \calX$ necessarily fixes a vertex of $\calC \calX$.

We claim that the isometry $g$ is \emph{contracting}: this means that  that every ball in $\calX$ disjoint from the axis $A$ has universally bounded projection to $A$. For the CAT(0) metric on $\calX$, this is a theorem of Bestvina and Fujiwara \cite[Theorem 5.4]{Bestvina-Fujiwara:characterization}. For the combinatorial metric on $\calX$, this is  a theorem of Genevois \cite[Theorem 3.10]{Genevois16}. (Alternately, one may use 
an argument of Huang \cite{Huang:Contracting} to transfer the Bestvina--Fujiwara conclusion to the cubical metric.) As a consequence of the claim, 
a theorem of Arzhantseva, Cashen, and Tao \cite[Theorem 6.4]{arzhantseva-cashen-tao} and Yang \cite[Theorem 1.3]{Yang:GrowthTightness} says that the $G$--action on $\calX$ is growth tight. 

We have now checked all the hypotheses of \refprop{SmallGrowthTight}, with $\Upsilon = \calX^{(1)}$ and $\calC \calX$ playing the role of $X$. Thus we have an inequality of growth rates:
\[ \lambda_H(\calX) < \lambda_G(\calX). \qedhere
\]
\end{proof}

\section{Examples and open problems}\label{Sec:Examples}

This section explores the extent to which the hypotheses of Theorems~\ref{Thm:GrowthGeneral}--\ref{Thm:Cubulated} can be loosened, or the conclusions strengthened.

\subsection{Uniform bounds on growth}
Given \refthm{GrowthGeneral}, one may ask whether there exists a uniform upper bound $\alpha < \lambda_G$
such that $\lambda_H \leq \alpha$ for each infinite index quasiconvex subgroup $H \subset G$. For instance, Corlette showed that this is the case with lattices in quaternionic hyperbolic spaces \cite{Corlette90}. 

\begin{example}
For $k \geq 2$, let $H_{\HH}^k$ be $k$--dimensional quaternionic hyperbolic space. Every cocompact lattice $G \subset \mathrm{Isom} (H_{\HH}^k)$ is a hyperbolic group. Let $\Gamma \subset G$ be an infinite-index subgroup. Corlette \cite{Corlette90} showed that the growth rates of $G$ and $\Gamma$ with respect to the action on  $H_{\HH}^k$ satisfy
\[ \lambda_{\Gamma}(H_{\HH}^k) \leq e^{4k} < e^{4k+2} = \lambda_{G}(H_{\HH}^k).
\]
\end{example}

On the other hand, in \refthm{NonUniform} below, we show that no such gap between $G$ and its subgroups can exist when $G = F_2$.

Before giving the construction, we recall some graph terminology. The \emph{girth} of a graph is the length of the shortest cycle. If $T$ is a tree, i.e.\ a graph with no cycles, the girth of $T$ is infinite. A \emph{rooted tree} is a tree $T$, with a fixed vertex designated as the root. Every rooted tree can be directed outward from the root.
A \emph{leaf} in a directed tree is a vertex with no outgoing edges.

For an integer $k \geq 1$, a \emph{$k$--tree} is a rooted tree where each leaf is reachable from the root by a geodesic of length $k$,
and every other vertex has 3 outgoing edges, except for at most one vertex having 2 outgoing edges.

\begin{lemma}\label{Lem:kTree}
 Let $T_k$ be a $k$--tree.
Then the number of leaves in $T_k$ is at least $2\times 3^{k-1}$, with the lower bound realized when the root has 2 outgoing edges.
\end{lemma}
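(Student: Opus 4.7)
The plan is to reduce the problem to counting leaves in an almost-complete ternary tree. First I would observe that in $T_k$ every non-leaf vertex has $3$ outgoing edges except possibly one deficient vertex with $2$ outgoing edges; in particular, $T_k$ embeds into a complete ternary tree of depth $k$ (having $3^k$ leaves), and is obtained from it either by doing nothing or by deleting a single subtree hanging off the deficient vertex.

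Next I would split into cases based on the depth $d$ of the deficient vertex, where $d \in \{0, 1, \ldots, k-1\}$, treating the ``no deficient vertex'' case separately. In that degenerate case $T_k$ is a complete ternary tree of depth $k$, contributing $3^k$ leaves. Otherwise, a depth-by-depth count gives $3^i$ vertices at each depth $i \le d$; then at depth $d+1$ there are $3^{d+1} - 1$ vertices (one fewer than the complete count), and each is the root of a complete ternary subtree of depth $k-1-d$. Therefore the total number of leaves equals
\[
(3^{d+1} - 1)\cdot 3^{k-1-d} \;=\; 3^k - 3^{k-1-d}.
\]

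Finally, I would minimize the expression $3^k - 3^{k-1-d}$ over the allowable values of $d$. Since this is decreasing in $k-1-d$, it is smallest when $d = 0$, yielding $3^k - 3^{k-1} = 2\cdot 3^{k-1}$. The ``no deficient vertex'' case gives $3^k > 2\cdot 3^{k-1}$, so it is never the minimum. The case $d=0$ is exactly the statement that the root has $2$ outgoing edges, which supplies the equality case.

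This is purely a bookkeeping lemma, and I do not anticipate a genuine obstacle; the only point needing care is making sure the ``at most one vertex has $2$ outgoing edges'' clause is used correctly, so that the formula $3^k - 3^{k-1-d}$ applies without further corrections from additional deficient vertices lower in the tree.
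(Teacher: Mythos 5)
Your proof is correct and matches the paper's (one-line) argument in substance: the paper inducts on the distance from the $2$--vertex to a leaf, which is exactly your parameter $k-d$, and your explicit count $3^k - 3^{k-1-d}$ is what that induction establishes. The case analysis, the minimization at $d=0$, and the identification of the equality case are all handled correctly.
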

\begin{proof}
This holds by induction on the distance from a 2--vertex to a leaf.
\end{proof}

Let $T$ be a directed tree. For a vertex $v \in T$ and for $k \geq 1$, define \emph{depth~$k$ subtree} $T_k(v) \subset T$ to be the subtree reachable by directed paths of length $\leq k$ from $v$. We think of $v$ as the root of $T_k(v)$.

\begin{lemma}\label{Lem:TreeGrowth}
 Let $T$ be a rooted tree with root $b$. Suppose that for every $v \in T$, the depth~$k$ subtree $T_k(v)$  is a $k$--tree. Then there is a constant $C > 0$ such that for $n \in \naturals$,
\[
f_T(n) \: = \:  \# \left\{ v\in T: \dist_T(b, \, v) \leq n \right\}  \: \geq \: C \left( 3 \times \left( \tfrac{2}{3} \right)^{1/k} \right)^n.
\]
\end{lemma}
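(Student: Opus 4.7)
The plan is to iterate \reflem{kTree} along the tree, using the fact that every vertex $v \in T$ is the root of its own depth~$k$ subtree that is a $k$-tree, and then convert the resulting count at depths divisible by $k$ into the claimed exponential bound.

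First, I would set $N_k = 2 \cdot 3^{k-1}$ and prove by induction on $m \geq 0$ that the number of vertices at distance exactly $mk$ from $b$ is at least $N_k^m$. The base case $m=0$ counts the root itself. For the inductive step, suppose there are at least $N_k^m$ vertices at distance exactly $mk$. Each such vertex $v$ is the root of a depth~$k$ subtree $T_k(v)$, which by hypothesis is a $k$-tree; hence \reflem{kTree} provides at least $N_k$ leaves of $T_k(v)$, i.e.\ at least $N_k$ descendants of $v$ at distance exactly $(m+1)k$ from $b$. Summing over all $v$ at distance $mk$ (and noting these descendant sets are disjoint because $T$ is a tree) gives at least $N_k^{m+1}$ vertices at distance $(m+1)k$, closing the induction.

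Next, I would handle arbitrary $n$. Given $n \in \naturals$, write $n = mk + r$ with $0 \leq r < k$. Since vertices at distance $mk$ are in particular at distance $\leq n$, the previous step gives
\[
f_T(n) \; \geq \; N_k^m \; = \; N_k^{(n-r)/k} \; \geq \; N_k^{-1} \cdot N_k^{n/k},
\]
using $r < k$ and $N_k \geq 1$. Finally, compute
\[
N_k^{1/k} \; = \; (2 \cdot 3^{k-1})^{1/k} \; = \; 2^{1/k} \cdot 3^{(k-1)/k} \; = \; 3 \cdot (2/3)^{1/k},
\]
so setting $C = N_k^{-1} = (2 \cdot 3^{k-1})^{-1}$ yields
\[
f_T(n) \; \geq \; C \bigl( 3 \cdot (2/3)^{1/k} \bigr)^n,
\]
which is the stated bound. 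There is no real obstacle here; the argument is a clean induction followed by an arithmetic simplification, with \reflem{kTree} doing all the combinatorial work.
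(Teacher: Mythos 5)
Your proof is correct and follows essentially the same route as the paper: iterate \reflem{kTree} at each vertex to get a multiplicative gain of $2\times 3^{k-1}$ every $k$ levels, then convert to the stated exponential bound. The paper phrases this as the recursion $s_T(n+k)\geq (2\times 3^{k-1})\,s_T(n)$ on the spherical growth function with the first $k$ values as base cases, while you track only the levels that are multiples of $k$ and absorb the remainder into the constant $C=N_k^{-1}$, but these are cosmetic variants of the same argument.
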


\begin{proof}
The growth function $f_T(n)$ is bounded below by the spherical growth function
\[
s_T(n) = \# \left\{ v\in T: \dist_T(b, \, v) = n \right\}.
\]
By \reflem{kTree}, we have
\[
s_T(n+k) \geq \left( 2 \times 3^{k-1} \right) s_T(n),
\]
hence the result follows by induction. The first $k$ values of $s_T$ form the base case of the induction, and  determine the constant $C$.
\end{proof}

\begin{theorem}\label{Thm:NonUniform}
Let $G = \langle g_1, g_2 \rangle$. Let $\Upsilon$ be the Cayley graph of $G$ with respect to the free generators. Then there is a sequence of infinite index quasiconvex subgroups $H_k \subset G$, such that
\[
\lim_{k \to \infty} \lambda_{H_k}(\Upsilon) =  3 = \lambda_G(\Upsilon).
\]
\end{theorem}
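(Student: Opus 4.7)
My plan is to construct, for each $k \geq 1$, a finitely generated (hence quasiconvex in the free group $F_2$) subgroup $H_k \leq F_2$ of infinite index via a carefully chosen Stallings graph $\Gamma_k$, and then apply \reflem{TreeGrowth} to the tree of non-backtracking walks in $\Gamma_k$. Specifically, I will take $\Gamma_k$ to be an $F_2$-labelled, finite, $4$-regular graph of girth greater than $2k+1$ with a single edge removed. The source for such a graph is the Cayley graph of a finite quotient of $F_2$ chosen with large systole (available by standard constructions, e.g.\ congruence quotients); removing one edge produces two ``defect'' vertices $u_1, u_2$ of degree $3$, and the girth hypothesis forces them to be at graph-distance at least $2k+1$. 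Fixing a basepoint $b \neq u_1, u_2$ and letting $H_k$ be the subgroup of $F_2$ whose Stallings graph equals $(\Gamma_k, b)$: finiteness of $\Gamma_k$ makes $H_k$ finitely generated, while the defect vertices witness that $\Gamma_k$ is not a covering of the $2$-petalled rose, so $H_k$ has infinite index.

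To bound $\lambda_{H_k}(\Upsilon)$ from below, I will let $T$ be the rooted tree whose vertices are non-backtracking walks in $\Gamma_k$ starting from $b$, with children given by one-step NB extensions. The out-degree of a vertex $v \in T$ equals $3$ or $2$ according as the walk's endpoint is a non-defect or defect vertex of $\Gamma_k$. The crucial observation is that $T_k(v)$ is a $k$-tree for every $v \in T$: the girth hypothesis makes every $k$-ball in $\Gamma_k$ a tree, so NB walks of length at most $k$ visit each vertex at most once, and the defect-separation hypothesis ensures at most one defect lies in any $k$-ball. Hence every non-leaf of $T_k(v)$ has out-degree $3$ except possibly one with out-degree $2$, matching the $k$-tree definition. \reflem{TreeGrowth} then yields $f_T(n) \geq C \cdot (3 \cdot (2/3)^{1/k})^n$.

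Finally, elements of $H_k$ correspond bijectively with closed reduced NB walks at $b$ in $\Gamma_k$, of matching word length, so the remaining step is to pass from $f_T$ (counting all NB walks from $b$) to the count of closed NB walks at $b$. Standard Perron--Frobenius theory for the non-backtracking transition matrix---the same circle of ideas used in \refsec{Perron}---gives both counts the same exponential growth rate, namely the NB spectral radius $\rho_k$ of $\Gamma_k$. Thus $\lambda_{H_k}(\Upsilon) = \rho_k \geq 3 \cdot (2/3)^{1/k}$, and the trivial upper bound $\lambda_{H_k} \leq \lambda_{F_2} = 3$ then forces $\lambda_{H_k}(\Upsilon) \to 3$. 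The main obstacle will be the combinatorial construction of $\Gamma_k$, realizing the girth and defect-separation conditions simultaneously; the spectral identification is standard but should be made explicit.
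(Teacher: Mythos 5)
Your construction is essentially the paper's: a finite cover of the rose of girth $>2k+1$ with one edge deleted, with \reflem{TreeGrowth} applied to the resulting tree of non-backtracking walks, and infinite index/quasiconvexity read off from the defect vertices and finite generation. The only real difference is the final bookkeeping step --- the paper passes from counting all vertices of $\widetilde A_k$ to counting points of the $H_k$--orbit of $b$ via a density argument (that orbit has density $1/[G:\pi_1 B_k]$ among the vertices), while you instead invoke Perron--Frobenius for the non-backtracking transition matrix to equate the growth rates of all walks and of closed walks at $b$; both work, and your version is already justified by \refthm{PathCount} applied to the edge automaton with the two choices of accept states.
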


\begin{proof}
 Let $B$ be a bouquet of two circles. Then we may identify  $\Upsilon$ with $\widetilde B$, so that the basepoint $b \in \widetilde B$ corresponds to $1 \in G$. The spherical growth rate of $G$ is
 $s_{G,\Upsilon}(n) =  4 \times 3^{n-1}$. Thus
 \[
f_{G,\Upsilon}(n) = \sum_{i=0}^{n} s_{G,\Upsilon}(i) = 2 \times 3^n -1
\qquad \text{and} \qquad
\lambda_G(\Upsilon) = \lim_{n \to \infty} \sqrt[n]{ f_{G,\Upsilon}(n) } =  3.
\]

By residual finiteness of $G = \pi_1 B$, for each $k \geq 1$ let
$B_k$ be a finite based cover whose girth is at least $2k+1$.
Let $A_k$ be obtained from  $B_k$ by removing a single edge at the basepoint. The endpoints of the removed edge become trivalent in $A_k$.
Then the shortest geodesic path in $A_k$ starting and ending at a trivalent vertex has length at least $2k$, because
 $\girth (B_k)  \geq 2k+1$. Let $H_k = \pi_1(A_k)$.

Consider the based universal cover $\widetilde A_k$ as a subtree of $\Upsilon = \widetilde B$. The root
 $b \in \widetilde A_k$ is a trivalent vertex corresponding to $1 \in \Upsilon$. The vertices of $\widetilde A_k$ that
 lie in the $H_k$--orbit of $b$
form an equidistributed subset of density $1/[G: \pi_1 B_k]$.

Note that $\widetilde A_k$ satisfies the hypotheses of \reflem{TreeGrowth}. This is because every vertex of $\widetilde A_k$ with two outgoing edges is trivalent, and every pair of trivalent vertices in $\widetilde A_k$ are distance at least $2k$ apart. Thus there is a positive constant $C_1$ such that
\[
f_{H_k,\Upsilon}(n) \geq \frac{1}{[G: \pi_1 B_k]} \, f_{\widetilde A_k}(n) \geq C_1 \left( 3 \times \left( \tfrac{2}{3} \right)^{1/k} \right)^n .
\]
Here, the first inequality comes from the density of the $H_k$--orbit of $b$ in $\widetilde A_k$, and the second inequality is by \reflem{TreeGrowth}.
Therefore,
\[
3 \geq \lambda_{H_k}(\Upsilon) = \lim_{n \to \infty} \sqrt[n]{ f_{H_k,\Upsilon}(n)} \geq   \left( 3 \times \left( \tfrac{2}{3} \right)^{1/k} \right),
\]
hence $\lambda_{H_k}(\Upsilon)$ converges to $ 3$ as $k \to\infty$.
\end{proof}

The proof of \refthm{NonUniform} is elementary, and needs none of the tools used in the earlier sections, as counting vertices is easier  in trees than in general Cayley graphs.
\refthm{NonUniform} has been generalized  in \cite{LiWise2017}
to assert that if $X$ is a compact special cube complex, then there is a sequence of infinite index quasiconvex subgroups of $\pi_1X$ whose growth rates converge to the growth rate of $\pi_1X$.

\subsection{The need for quasiconvexity}
The following examples show that some version of the quasiconvexity hypothesis is crucial for \refthm{GrowthGeneral}.

\begin{example}\label{Ex:Fiber}
Let  $G = \pi_1(M)$, where $M$ is a closed hyperbolic $3$--manifold that fibers over the circle. This fibration induces a short exact sequence
\[
1 \to H \to G \to \ZZ \to 1.
\]
It is well known that the fiber subgroup $H$ is highly distorted in $G$. See e.g.\ \cite{cannon-thurston}.

Since $G/H \cong \ZZ$, for any Cayley graph $\Upsilon$ the growth function $f_{G,\Upsilon}(n)$ is at most linearly larger than $f_{H,\Upsilon}(n)$, hence $\lambda_H(\Upsilon) = \lambda_G(\Upsilon)$.
\end{example}

\begin{example}\label{Ex:Rips}
Rips \cite{Rips82} observed that for each finitely presented group $Q$ there exists a short exact sequence
\[
1\rightarrow N \rightarrow G \rightarrow Q \rightarrow 1,
\]
where $G$ is a $C'(\frac16)$ small-cancellation group (hence hyperbolic),
and where $N$ is generated by two elements.
This provides normal subgroups $N\subset G$ with exotic growth properties. In particular, when $Q$ has sub-exponential growth (e.g. $Q \cong \ZZ$, as in  \refex{Fiber}), it follows that
$\lambda_N = \lambda_G$.
\end{example}

\subsection{Open questions}\label{Sec:Problems}

\begin{prob}\label{Prob:Cubulated}
Generalize \refthm{Cubulated} to cubically convex subgroups of a cubulated group $G$. Do all such subgroups grow slower than $G$ itself? Given \cite[Theorem 9.2]{arzhantseva-cashen-tao} and  \refprop{SmallGrowthTight}, the challenge is to find some hyperbolic space on which $G$ is WPD but $H \subset G$ acts elliptically. It may be possible to obtain such a space by coning off certain $G$--orbits in the contact graph $\calC \calX$, as in \refsec{RelHyperbolic}.

One could also approach this problem using regular languages.
By the work of Niblo and Reeves \cite{NibloReeves98}, cubulated groups are biautomatic. Thus \refthm{PathCount} and the rest of Perron--Frobenius theory are already available for this problem. The challenge is to construct a free product as in \refthm{FreeProduct} or \refthm{FreeProductAlternate}, or else to circumvent this construction.

\end{prob}

\begin{prob}
Find an analogue of \refthm{GrowthGeneral} that works in the general setting of automatic groups, without any geometric hypotheses. That is, let $L_G$ be an automatic structure for $G$, with some generating set $S$. 
Is there a language--theoretic description of the automatic subgroups $H \subset G$ such that $f_{L_H}$ grows exponentially more slowly than $f_{L_G}$?  This question is closely related to the problem, studied by Ceccherini-Silberstein and Woess  \cite{CSW1, CSW2}, of deciding what languages are \emph{growth sensitive}, that is, what languages have the property that prohibiting a set of sub-words reduces the growth rate of the language.
\end{prob}

\begin{prob}
Suppose $G$ is a finitely generated group, and $H$ is a non-trivial subgroup such that there is a monoid embedding $H*\naturals \subset G$.
Show that the growth rate of $G$ is larger than that of $H$. If the subgroup $H$ has divergence type, one can likely do this using Poincar\'e series, as in \cite{sambusetti:free-products} and \cite[Section 6]{arzhantseva-cashen-tao}.
\end{prob}


\bibliographystyle{alpha}
\bibliography{wise}

\end{document}